\numberwithin{equation}{section}
\newtheorem{theorem}{Theorem}[section]
\newtheorem{definition}[theorem]{Definition}
\newtheorem{lemma}[theorem]{Lemma}
\newtheorem{corollary}[theorem]{Corollary}
\newtheorem{remark}[theorem]{Remark}
\newcommand*{\N}{\ensuremath{\mathbb{N}}}
\newcommand*{\Z}{\ensuremath{\mathbb{Z}}}
\newcommand*{\R}{\ensuremath{\mathbb{R}}}
\renewcommand{\i}{\mathrm{i}}
\renewcommand{\epsilon}{{\varepsilon}}
\renewcommand{\d}[1]{\,\mathrm{d}#1 \,}
\newcommand{\F}{\mathcal{F}} 
\newcommand{\J}{\mathcal{J}} 
\newcommand{\p}{{\mathrm{per}}}
\newcommand{\X}{{\mathcal{X}}}
\renewcommand{\S}{\mathcal{S}}
\renewcommand{\tilde}{\widetilde}
\newlength{\dhatheight}
\newcommand{\high}[1] {{\color{red}{#1}}}
\begin{document}

\sloppy

\title{The radiation condition for Helmholtz equations above (locally perturbed) periodic surfaces}
\author{Ruming Zhang\thanks{Institute for Mathematics, Technische Universit{\"a}t Berlin, Berlin, Germany; \texttt{ruming.zhang@tu-berlin.de}}
}
\date{}
\maketitle

\abstract{
The radiation condition is the key question in the mathematical modelling for scattering problems in unbounded domains. Mathematically, it plays the role as the "boundary condition" at the infinity, which guarantees the well-posedness of the mathematical problem; physically, it describes the far-field asymptotic behaviour of the physical waves. In this paper, we focus on the radiation conditions for scattering problems above (locally perturbed) periodic surfaces. According to Hu et al. (2021), the radiating solution satisfies the Sommerfeld radiation condition:
\[
\frac{\partial u}{\partial r}-\i k u=o\left(r^{-1/2}\right),\quad r\rightarrow\infty.
\]
Although there are literature which have studied this problem, there is no specific method for dealing with periodic structures. Due to this reason, the important properties for the periodic structures may be ignored. Moreover, the existing method is not extendable to bi-periodic structures in three dimensional spaces. 

In this paper, we study the radiation condition for the time-harmonic scattering problem with periodic surfaces, which is modelled by the Helmholtz equation. We introduce a novel method based on the Floquet-Bloch transform, which, to the best of the author’s knowledge, is the first method that works particularly for periodic media. With this method, we improve the Sommerfeld radiation condition for the scattered field from periodic media to:
\[
\frac{\partial u}{\partial r}-\i k u=O\left(r^{-3/2}\right),\quad r\rightarrow\infty.
\]
 More importantly, the prospect of extending this method to 3D cases is optimistic.
}

\section{Introduction}

For the mathematical modelling and analysis of the scattering problems with unbounded domains, the radiation condition is always the key topic. From the mathematical point of view, we need a "boundary condition" at the infinity, to formulate a well-posed mathemtaical problem to describe the physical process. From the physical point of view, a proper radiation condition describes the physical solution explicitly, such that the mathematical model coincides with the physical phenomenan in the real world. For the easiest case, i.e., time-harmonic acoustic waves scattered by a bounded obstacle, the scattered field satisfies the well-known Sommerfeld radiation condition (see \cite{Somme1957}). Similarly, for electromagnetic waves the radiation condition is know as the Silver-M{\"u}ller condition. For details we refer to \cite{Colto2013}.

In this paper, we consider a much more complex case, i.e., time-harmonic acoustic waves scattered by (locally perturbed) periodic surfaces. The scattering problems in periodic structures have a large area of applications in, for example, photonic crystals. We refer to \cite{Petit1980} for details. The problem is modelled by the Helmholtz equation, where the domain is a periodic half-space.  Compared to the bounded obstacles, this problem is more difficult due to the unbounded domains and complex singularities. If we ignore the periodicity of the structure but simply treat it as a infinite layer or interface, then the theory of rough surface/layer scattering problems can be adopted. Roughly speaking, the theory is based on the integral equation representation for the solutions, and detailed analysis on the integrals in infinite curves is carried out to study the properties of the radiation conditions.  We refer to \cite{Arens2005a,Chand1996,Chand1996a,Chand1998,Chand1999a,Chand1999} for a series of earlier work  related to this topic.  Here we also mention that, for the special step-like surfaces, a shaper condition is obtained, see \cite{Lu2021}. Based on the method, Hu, Lu and Rathsfeld studied the radiation condition for the Dirichlet periodic surface scattering problems.   They proved that the scattered field satisfies the half-plane Sommerfeld radiation condition (HPSRC) in a half space above the periodic surface:
\[
\sup_{x_2> h}r^{1/2}\left|\frac{\partial u}{\partial r}(x)-\i k u(x)\right|\rightarrow 0,\quad \sup_{x_2> h}r^{1/2}\left|u(x)\right|<\infty,\quad |x|=r\rightarrow\infty.
\]
For details we refer to  \cite{Hu2021}.
  The general case, i.e., scattering problems with periodic inhomogeneous layers, it is more complex due to the existence of guided waves (see \cite{Bonne1994}). Based on his previous work as well as the work in \cite{Hu2021}, Kirsch proved the radiation condition for the radiating part in \cite{Kirsc2022}. At the end of that paper, he also gave the asymptotic  behaviour of the radiating part. However, due to technical difficulties, the extension of this kind of problems to bi-periodic structures in three dimensional spaces is not realistic.
 
We can clearly observe that, in all the mentioned work, the analysis is based on the rough surface/layer scattering problem, instead of the periodic surface/layer scattering problems. It implies that some major properties of the periodic structures may be ignored in the analysis, and the results are expected to be improved if specific methods are developed. Motivated by this, we intend to introduce a novel method, which is, to the best of the author's knowledge, the first one to treat particularly the radiation conditions for the periodic structures. We will utilize the properties for the periodic problems and obtain a shaper radiation condition compared to HPSRC introduced by Hu, Lu and Rathsfeld in \cite{Hu2021}:
\[
\sup_{x_2> h}r^{3/2}\left|\frac{\partial u}{\partial r}(x)-\i k u(x)\right|<\infty,\quad \sup_{x_2> h}r^{1/2}\left|u(x)\right|<\infty,\quad |x|=r\rightarrow\infty.
\]
 More importantly, this approach has a great potential to be extended to bi-periodic structured embedded in three-dimensional spaces, which is impossible for the previous methods.

An important tool to treat periodic structures is the so-called Floquet-Bloch transform. We refer to \cite{Hoang2011,Fliss2015,Kirsc2017a,Zhang2019a} for the analysis of the radiation conditions for scattering problems in closed periodic waveguides. For scattering problems with (locally perturbed) periodic surfaces, we refer to \cite{Coatl2012,Lechl2015e,Hadda2016,Lechl2016,Lechl2016a,Lechl2017,Zhang2017e} for the applications in numerical simulations. For more general open waveguide problems, i.e., with the existence of guided waves, we refer to \cite{Kirsc2017} for detailed discussions.  In this paper, we will also apply the Floquet-Bloch transform to the  scattering problem. The original problem is then written into a equivalent family of quasi-periodic problems, and the solution has the form of  the integral of the quasi-periodic functions with respect to the quasi-periodicity parameter. With the integral representation, the scattered field is decomposed into an evanescent wave and a Herglotz wave function with particular density function. Thus we only need to investigate the radiation conditions for both of the functions. Technically, the asymptotic behaviours of some parameter dependent integrals will be required for the investigation.

The rest of the paper will be organized as follow. In Section \ref{sec:model}, the mathematical model for the problem with purely periodic surface will be given and we will also give a brief review of some important notations and results. The Floquet-Bloch transform and its applications will be described in Section \ref{sec:fb}. For the convenience of the readers, we will present {\bf the main theorem (Theorem \ref{th:SRC}) and also the sketch of the proof in Section \ref{th:skt}}. The proof of Part (A) in Theorem \ref{th:SRC} we refer to Section \ref{sec:decay_uj}. The proof of Part (B) is much more challenging. We prove three sub-topics in   Sections \ref{sec:rc_0}, \ref{sec:rc_1} and \ref{sec:rc_2}, and then summarize the proof in Section \ref{sec:proof_main}. Finally, we extend the proof for locally perturbed periodic surfaces in Section \ref{sec:loc_sur}.

\section{The Dirichlet scattering problems}
\label{sec:model}

Assume $\zeta$ is a bounded and $2\pi$-periodic function, then define $\Gamma$ as the graph of $\zeta$:
\[
\Gamma:=\left\{(x_1,\zeta(x_1)):\,x_1\in\R\right\}\subset\R^2.
\]
The domain $\Omega$ is the half space above $\Gamma$. Without loss of generality, we assume that
\[
0<\inf_{x_1\in\R}\{\zeta(x_1)\}\leq \sup_{x_1\in\R}\{\zeta(x_1)\}<H.
\]
Let $\Gamma^h:=\R\times\{h\}$ for any $h\in\R$, then $\Gamma^H$ is the straight line which lies above $\Gamma$. Let $\Omega^H$ be defined as the domain between $\Gamma$ and $\Gamma^H$.  Thus $\Omega^H$ is an infinite strip which is $2\pi$-periodic in $x_1$-direction. Let $U_h$ be the half space above $\Gamma^h$, i.e., $U_h:=\R\times(h,\infty)$. For the visulization we refer to Figure \ref{fig:sample}.

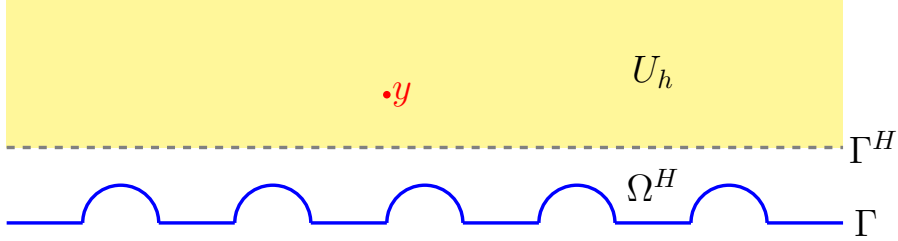
\begin{figure}[h]
\centering
\begin{tikzpicture}[scale=1, transform shape]
\draw[blue,line width=0.45mm](-5.5,0) -- (-4.5,0);
\draw[blue,line width=0.45mm](-3.5,0) arc (0:180:.5);
\draw[blue,line width=0.45mm](-3.5,0) -- (-2.5,0);
\draw[blue,line width=0.45mm](-1.5,0) arc (0:180:.5);
\draw[blue,line width=0.45mm](-1.5,0) -- (-0.5,0);
\draw[blue,line width=0.45mm](0.5,0) arc (0:180:.5);
\draw[blue,line width=0.45mm](0.5,0) -- (1.5,0);
\draw[blue,line width=0.45mm](2.5,0) arc (0:180:.5);
\draw[blue,line width=0.45mm](2.5,0) -- (3.5,0);
\draw[blue,line width=0.45mm](4.5,0) arc (0:180:.5);
\draw[blue,line width=0.45mm](4.5,0) -- (5.5,0);
\fill[yellow!50](-5.5,1) rectangle (5.5,3);
\draw[black!50, style=dashed, line width=0.45mm] (-5.5,1) -- (5.5,1);
\draw(3,0.5) node {\Large{$\Omega^H$}};
\draw(3,2.0) node {\Large{$U_h$}};
\draw(5.8,0) node {\Large{$\Gamma$}};
\draw(5.9,1) node {\Large{$\Gamma^H$}};
\fill[red] (-0.5,1.7) circle(0.05);
\draw(-0.3,1.7) node {\Large{\high{$y$}}};
\end{tikzpicture}
\label{fig:sample}
\caption{Definition of domains, surfaces and points.}
\end{figure}

Consider the following Dirichlet scattering problem above  $\Gamma$:
\begin{equation}
\label{eq:sca}
\Delta u+k^2 u=0\text{ in }\Omega;\quad u=f:=-G(x,y)\text{ on }\Gamma,
\end{equation}
where  $y=(y_1,y_2)$ is the source located above $\Gamma$ (i.e., $y_2>\zeta(y_1)$); $G(x,y)$ is the half-space Green's function given by
\[
G(x,y)=\Phi(x,y)-\Phi(x,y'):=\frac{\i}{4}H_0^{(1)}(k|x-y|)-\frac{\i}{4}H_0^{(1)}(k|x-y'|)
\]
here $H_0^{(1)}(\cdot)$ is the Hankel function of the first kind, $y'=(y_1,-y_2)$ is the reflection of $y$ with respect to $x_2=0$.  Moreover, it is know that for the radiating solution $u$ satisfies the so-called upward propagationg radiation condition (UPRC, see \cite{Chand1998}):
\begin{equation}
\label{eq:uprc}
u(x)=2\int_{\Gamma^H}\frac{\partial\Phi(x,y)}{\partial y_2}u(y)\d s(y),\quad x_2>H.
\end{equation}
The UPRC is equivalent to the {\em angular spectrum representation} (see \cite{Arens2005a}),
\begin{equation}
\label{eq:asr}
u(x_1,x_2)=\int_{\Gamma^H}e^{\i\xi x_1 +\i\sqrt{k^2-\xi^2}(x_2-H)}\hat{u}(\xi,H)\d\xi,
\end{equation}
which is equivalent to the DtN map in terms of the Fourier transform (see \cite{Chand2005}): 
\begin{equation}
\label{eq:rad}
\frac{\partial u}{\partial x_2}=\i\int_{\Gamma^H}\sqrt{k^2-\xi^2}e^{\i\xi x_1 +\i\sqrt{k^2-\xi^2}(x_2-H)}\hat{u}(\xi,H)\d\xi
\end{equation}
where $\hat{u}(\xi,H)$ is the Fourier transform of $u(\cdot,H)$.  The well-posedness of this problem is already guaranteed by the following theorem.

\begin{theorem}[Theorem 4.1, \cite{Chand2010}]
\label{th:wep}
When $\Gamma$ is a graph of a bounded function, then given $f\in H_r^{1/2}(\Gamma)$ for any fixed $|r|<1$, the problem \eqref{eq:sca},\eqref{eq:rad} has a unique solution $u\in H^1_r(\Omega^H)$.
\end{theorem}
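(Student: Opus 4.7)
The plan is to reformulate \eqref{eq:sca}--\eqref{eq:rad} as a variational problem on the bounded-in-$x_2$ strip $\Omega^H$ by encoding the radiation condition through a Dirichlet-to-Neumann (DtN) map $T$ on $\Gamma^H$. In the Fourier variable $\xi$, $T$ is the multiplier with symbol $\i\sqrt{k^2-\xi^2}$ (principal branch, nonnegative imaginary part), and direct symbol estimates would show that it maps $H^{1/2}_r(\Gamma^H)$ boundedly to $H^{-1/2}_r(\Gamma^H)$ for every $|r|<1$. The problem then reads: find $u\in H^1_r(\Omega^H)$ with $u|_\Gamma=f$ such that
\[
\int_{\Omega^H}\bigl(\nabla u\cdot\nabla\overline{v}-k^2 u\,\overline{v}\bigr)\,dx-\int_{\Gamma^H}(Tu)\,\overline{v}\,ds=0
\]
for all test functions $v$ in the dual weighted space.

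I would first establish the result in the unweighted case $r=0$. Uniqueness (with $f=0$) follows by testing with $v=u$: the identity $\Im\int_{\Gamma^H}(Tu)\overline{u}\,ds\ge 0$, arising from $\Im\sqrt{k^2-\xi^2}\ge 0$ together with the sign of the propagating contribution, forces the propagating Fourier modes of $u|_{\Gamma^H}$ to vanish, and a Rellich-type identity obtained by multiplying $\Delta u+k^2 u=0$ by $(x_2-H)\partial_{x_2}\overline{u}$ and integrating by parts over $\Omega^H$ controls the remaining evanescent energy, giving $u\equiv 0$. For existence I would split $T=T_c+K$ where $T_c$ corresponds to the evanescent symbol $-\sqrt{\xi^2-k^2}$ (for $|\xi|>k$) and produces a sign-definite quadratic form, while $K$ is compact between the relevant trace spaces; Lax--Milgram applied to the coercive part, combined with the Fredholm alternative and the uniqueness above, yields a unique $u$ satisfying $\|u\|_{H^1(\Omega^H)}\lesssim\|f\|_{H^{1/2}(\Gamma)}$.

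Passing to the weighted range $|r|<1$ is where the main work lies. My strategy would be to conjugate by the weight $w_r(x_1)=(1+x_1^2)^{r/2}$: the conjugated Laplacian differs from $\Delta$ by a first-order operator whose coefficients decay as $|x_1|\to\infty$, and the conjugated DtN $w_r T w_r^{-1}$ differs from $T$ by a pseudodifferential commutator of strictly lower order. The conjugated variational problem is therefore a compact perturbation of the unweighted one, and the Fredholm alternative propagates well-posedness across the whole interval by an analytic continuation in $r$ starting from $r=0$, where uniqueness has already been proved. The restriction $|r|<1$ is sharp: it reflects the square-root branch point of the DtN symbol at $|\xi|=k$, which is precisely what limits the weighted trace mapping properties of $T$. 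The subtle point I expect to be the main obstacle is making the commutator estimates uniform in $r$ near the endpoints $r=\pm 1$, where the commutator gain competes exactly with the regularity lost to the branch singularity.
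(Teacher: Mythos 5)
The paper does not prove this theorem; it is quoted verbatim from Chandler-Wilde and Elschner (2010) and used as a black box, so there is no internal proof to compare your sketch against. Judged on its own, your architecture (variational formulation on $\Omega^H$ with a DtN map $T$ on $\Gamma^H$, Rellich-type identity for uniqueness, weight conjugation for $|r|<1$) matches the overall shape of the cited argument and of its predecessor Chandler-Wilde and Monk (2005), but one of your steps would genuinely fail.

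The failing step is the existence argument at $r=0$, where you propose Lax--Milgram for a coercive part $T_c$ plus the Fredholm alternative for the remainder $K=T-T_c$. That remainder is a Fourier multiplier with bounded, compactly supported symbol acting on $\Gamma^H=\R\times\{H\}$. A translation-invariant operator on $\R$ is never compact between Sobolev spaces unless it vanishes: translate a fixed bump $f_n=f(\cdot-n)$, note $K f_n=(Kf)(\cdot-n)$ has constant norm and only weak limit $0$, and no subsequence converges strongly. So $K$ is \emph{not} compact $H^{1/2}(\Gamma^H)\to H^{-1/2}(\Gamma^H)$, and the Fredholm alternative is unavailable; the same obstruction rules out treating the $-k^2\int_{\Omega^H} u\bar v$ term as a compact perturbation, since $H^1(\Omega^H)\hookrightarrow L^2(\Omega^H)$ is not compact either. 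This is precisely why the rough-surface literature does not argue by compactness at $r=0$: the a priori bound from the Rellich identity (multiplier $(x_2-\mathrm{const})\,\partial_2\bar u$, together with a lifting of the inhomogeneous Dirichlet datum) is used directly to verify a Babu\v ska inf-sup condition for the sesquilinear form, which gives existence with no Fredholm step. By contrast, the weight-conjugation part of your plan for $|r|<1$ is on firmer ground: the commutators $[w_r,\Delta]w_r^{-1}$ and $[w_r,T]w_r^{-1}$ have coefficients or symbols that decay as $|x_1|\to\infty$, and such spatial decay combined with local Rellich compactness \emph{does} yield compact perturbations on the strip, so a Fredholm continuation in $r$ can be run once the $r=0$ inf-sup is in hand. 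The starting point, however, cannot itself be obtained by Fredholm and needs to be replaced by the direct inf-sup estimate.
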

Note that here $H^s_r(D)$  ($s,r\in\R$) is the weighted sobolev space defined in the unbounded domain $D\subset\R^2$ by
\[
H_s^r(D):=\left\{\phi:\,(1+|x|^2)^{r/2}\phi(x)\in H^s(D)\right\},
\]
where $H^s(D)$ is the classic Sobolev space.  Note that this theorem works not only for periodic surfaces, but also for more general rough surfaces with Dirichlet boundary conditions.

\begin{remark}
For the problem we are discussing, when $f$ is given by $-G(x,y)$ for a fixed $y$ above $\Gamma$, it is already known that $f$ decays at the rate of $O\left(|x_1|^{-3/2}\right)$ (see \cite{Chand1996}). Thus it is easily checked that $f\in H_r^{1/2}$ for any $r<1$. Thus $u\in H_r^1(\Omega^H)$ is the unique solution. 

The Dirichlet boundary condition here is not changable. If we choose another boundary condition, the uniqueness result in Theorem \ref{th:wep} may fail.  Moreover, we always need to restrict $\Gamma$ to be a graph of a function. 
\end{remark}

\section{The Floquet-Bloch transform and its applications}
\label{sec:fb}

\subsection{The Floquet-Bloch transform, and definitions of spaces}

An important tool for the periodic problems is the Floquet-Bloch (FB) transform. For the details of the definitions an propreties we refer to \cite{Lechl2016}. Given any function $\phi\in C_0^\infty(\R)$ (the space of smooth and compactly supported functions), we can define the one-dimensional transform as
\begin{equation}
\label{eq:def_fb}
(\J\phi)(\alpha,x)=\sum_{j\in\Z}\phi(x+2\pi j)e^{\i 2\pi \alpha j},\quad \alpha\in\Lambda:=[-1/2,1/2],\,x\in[-\pi,\pi].
\end{equation}
The transform is well-defined for any $\phi\in C_0^\infty(\R)$ since there are only finite number of non-vanishing terms in the series. But the operator  can also be extended  to more general function spaces. We give two examples of the norm of the space $H_0^r\left(\Lambda;H^s_\alpha[-\pi,\pi]\right)$ when $s=r=0$ (in this case, it is also denoted by $L^2\left(\Lambda;L^2[-\pi,\pi]\right)$) and $s\in\R$ and $r=n\in\N$:
\begin{align*}
\|\psi\|_{L^2\left(\Lambda;L^2[-\pi,\pi]\right)}&=\left[\int_\Lambda\int_{-\pi}^\pi|\psi(\alpha,x)|^2\d x\d\alpha\right]^{1/2};\\
\|\psi\|_{H_0^n\left(\Lambda;H^s[-\pi,\pi]\right)}&=\left[\sum_{\ell=0}^n\int_\Lambda\left\|\frac{\partial^\ell \psi}{\partial\alpha^\ell}(\alpha,\cdot)\right\|^2_{H^s_\alpha(-\pi,\pi)}\d\alpha\right]^{1/2}.
\end{align*}
 For the explicit definition of the space $H_0^r\left(\Lambda;H^s_\alpha[-\pi,\pi]\right)$ for general $r,s\in\R$, we also refer to \cite{Lechl2016}.  Then we have the following properties of the operator $\J$.

\begin{theorem}
\label{th:fb}
The Floquet-Bloch transform is an isomorphism between $H^s_r(\R)$ and $H_0^r\left(\Lambda;H^s_\alpha[-\pi,\pi]\right)$. Given any $\phi\in H^s_r(\R)$, the function $\J\phi(\cdot,x)$ is 1-periodic with respect to $\alpha$ and {\em $\alpha$-quasi-periodic} with respect to $x\in[-\pi,\pi]$, i.e.,
\[
(\J\phi)(\alpha,x+(2\pi,0)^\top)=e^{\i\alpha 2\pi}(\J\phi)(\alpha,x),\quad\forall(\alpha,x)\in\Lambda\times[-\pi,\pi].
\] The inverse operator is given by
\begin{equation}
\label{eq:def_ifb}
(\J^{-1}\psi)(x+(2\pi,0)^\top)=\int_{-1/2}^{1/2}\psi(\alpha,x)e^{\i 2\pi j\alpha}\d\alpha,\quad x\in[-\pi,\pi],\,j\in\Z.
\end{equation}
\end{theorem}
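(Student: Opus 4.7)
The plan is to establish the theorem in three logical stages: verification of the algebraic identities (quasi-periodicity, 1-periodicity in $\alpha$, and the inversion formula) on the dense subspace $C_0^\infty(\R)$; a Plancherel-type isometry at the base case $s=r=0$; and a bootstrap to general $s,r\in\R$ via differentiation under the sum and interpolation.

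\textbf{Algebraic identities.} Both the $\alpha$-quasi-periodicity of $(\J\phi)(\alpha,\cdot)$ and the 1-periodicity in $\alpha$ follow from direct index manipulation on the series \eqref{eq:def_fb}: reindexing $j\mapsto j+1$ produces a factor $e^{\pm\i 2\pi\alpha}$, and $e^{\i 2\pi j(\alpha+1)}=e^{\i 2\pi j\alpha}$ for $j\in\Z$ yields the periodicity. The inversion formula \eqref{eq:def_ifb} is Fourier series reconstruction on $[-1/2,1/2]$: the numbers $\phi(x+2\pi j)$ are, by construction, the Fourier coefficients of the 1-periodic function $\alpha\mapsto(\J\phi)(\alpha,x)$, so the integral on the right of \eqref{eq:def_ifb} recovers them. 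For $\phi\in C_0^\infty(\R)$ the sum is finite, so no convergence issues arise.

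\textbf{Base case $s=r=0$.} A Parseval computation on $[-1/2,1/2]$ gives
\begin{equation*}
\|\J\phi\|_{L^2(\Lambda;L^2[-\pi,\pi])}^2=\int_{-\pi}^\pi\sum_{j\in\Z}|\phi(x+2\pi j)|^2\d x=\|\phi\|_{L^2(\R)}^2,
\end{equation*}
so $\J$ extends from $C_0^\infty(\R)$ to a linear isometry $L^2(\R)\to L^2(\Lambda;L^2[-\pi,\pi])$. The inversion formula, again via Parseval, provides a bounded right inverse, giving surjectivity and hence the isomorphism at $s=r=0$.

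\textbf{General $s,r$.} Differentiating under the sum yields
\begin{equation*}
\frac{\partial^n}{\partial\alpha^n}(\J\phi)(\alpha,x)=(\i 2\pi)^n\sum_{j\in\Z}j^n\,\phi(x+2\pi j)\,e^{\i 2\pi\alpha j},
\end{equation*}
so $\partial_\alpha^n\circ\J$ is, modulo the base-case isometry, the map assigning to the cell value $\phi(x+2\pi j)$ the weight $|j|^n$. Since $x\in[-\pi,\pi]$ is bounded, $|j|^n\sim(1+|x+2\pi j|^2)^{n/2}$ uniformly, which proves the equivalence $\|\J\phi\|_{H_0^n(\Lambda;L^2[-\pi,\pi])}\sim\|\phi\|_{H^0_n(\R)}$ for integer $n$; fractional $r$ follows by interpolation between integer orders. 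The $H^s$-regularity in $x$ transfers analogously: on each $\alpha$-fibre, the $\alpha$-quasi-periodic Sobolev norm on $[-\pi,\pi]$, reassembled over $j\in\Z$, recovers exactly the full-line $H^s(\R)$ norm of $\phi$, because quasi-periodic Fourier modes on $[-\pi,\pi]$ parametrize the $\alpha$-shifted lattice frequencies of $\R$. Density of $C_0^\infty(\R)$ in $H^s_r(\R)$ then promotes $\J$ to a bounded linear map, the inversion formula supplies a bounded inverse, and the isomorphism statement follows. The main obstacle is giving a clean definition of $H_0^r(\Lambda;H^s_\alpha[-\pi,\pi])$ for fractional $r,s$, since the fibre space $H^s_\alpha[-\pi,\pi]$ depends on $\alpha$ through its quasi-periodic boundary condition; this is handled either by interpolation between integer-order spaces or by passing to the Fourier-series representation of quasi-periodic distributions, where the norm equivalences above extend to all $s,r\in\R$ by standard Bochner–Sobolev techniques as in \cite{Lechl2016}.
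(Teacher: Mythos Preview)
The paper does not actually prove this theorem: it is stated as a background result with the remark ``For the details of the definitions and properties we refer to \cite{Lechl2016}'' and no argument is given. Your proposal therefore supplies strictly more than the paper does, and the three-stage outline you give --- verify the algebraic identities on $C_0^\infty(\R)$, establish the $L^2$-isometry by Parseval on $[-1/2,1/2]$, then pass to general $s,r$ by the weight correspondence $\partial_\alpha^n\leftrightarrow |j|^n\sim(1+|x+2\pi j|^2)^{n/2}$ and interpolation --- is exactly the standard route used in the reference, so there is no meaningful divergence in approach.

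Your sketch is correct in its essentials. The only places that would need tightening in a full write-up are the ones you already flag: the $\alpha$-dependence of the fibre space $H^s_\alpha[-\pi,\pi]$ makes the interpolation step for fractional $r$ slightly delicate, and the cleanest fix (also the one taken in \cite{Lechl2016}) is to pass to the periodized variable $e^{-\i\alpha x}(\J\phi)(\alpha,x)$, which lives in a fixed periodic Sobolev space independent of $\alpha$, so that standard Bochner--Sobolev interpolation applies without moving target spaces.
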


\begin{figure}[h]
\centering
\begin{tikzpicture}[scale=.8, transform shape]
\draw[blue,line width=0.45mm](-3.5,0) -- (-2.5,0);
\draw[blue,line width=0.45mm](-1.5,0) arc (0:180:.5);
\draw[blue,line width=0.45mm](-1.5,0) -- (-0.5,0);
\draw[blue,line width=0.45mm](0.5,0) arc (0:180:.5);
\draw[blue,line width=0.45mm](0.5,0) -- (1.5,0);
\draw[blue,line width=0.45mm](2.5,0) arc (0:180:.5);
\draw[blue,line width=0.45mm](2.5,0) -- (3.5,0);
\draw[blue,line width=0.45mm](4.5,0) arc (0:180:.5);
\draw[blue,line width=0.45mm](4.5,0) -- (5.5,0);
\draw[blue,line width=0.45mm] (-3.5,2) -- (5.5,2);
\draw[blue, style = dashed, line width =0.45mm] (-3,0) -- (-3,2);
\draw[blue, style = dashed, line width =0.45mm] (-1,0) -- (-1,2);
\draw[blue, style = dashed, line width =0.45mm] (1,0) -- (1,2);
\draw[blue, style = dashed, line width =0.45mm] (3,0) -- (3,2);
\draw[blue, style = dashed, line width =0.45mm] (5,0) -- (5,2);
\draw(2.5,1) node {\Large{$\Omega^H$}};
\draw(5.8,0) node {\Large{$\Gamma$}};
\draw(5.9,2) node {\Large{$\Gamma^H$}};
\draw[blue,line width=0.45mm](8.5,1) -- (9,1);
\draw[blue,line width=0.45mm](10,1) arc (0:180:.5);
\draw[blue,line width=0.45mm](10,1) -- (10.5,1);
\draw[blue,line width=0.45mm](8.5,1) -- (8.5,3);
\draw[blue,line width=0.45mm](10.5,1) -- (10.5,3);
\draw[blue,line width=0.45mm](8.5,3) -- (10.5,3);
\draw[red,line width=0.45mm](8.5,-1) --(10.5,-1);
\draw[red,line width=0.45mm](8.5,-1) -- (8.5,-.9);
\draw[red,line width=0.45mm](10.5,-1) -- (10.5,-.9);
\draw[black,line width=0.2mm](9.3,-0.2) -- (9.7,0.2);
\draw[black,line width=0.2mm](9.3,0.2) -- (9.7,-0.2);
\draw[black, line width=0.4mm] (6,1.2) -- (8,1.2);
\draw[black, line width=0.4mm] (6,0.6) -- (8,0.6);
\draw[black, line width=0.4mm] (7.8,1.4) -- (8,1.2);
\draw[black, line width=0.4mm] (6,0.6) -- (6.2,0.4);
\draw(7,1.5) node {\Large{$\J$}};
\draw(7.2,0.3) node {\Large{${\J}^{-1}$}};
\draw(9.5,2.3) node {\Large{$x\in\Omega^H_0$}};
\draw(10.8,1) node {\Large{$\Gamma_0$}};
\draw(10.9,3) node {\Large{$\Gamma^H_0$}};
\draw(9.5,-1.3) node {\Large{$\alpha\in\Lambda$}};
\end{tikzpicture}
\label{fig:fb}
\caption{The Floquet-Bloch transform.}
\end{figure}
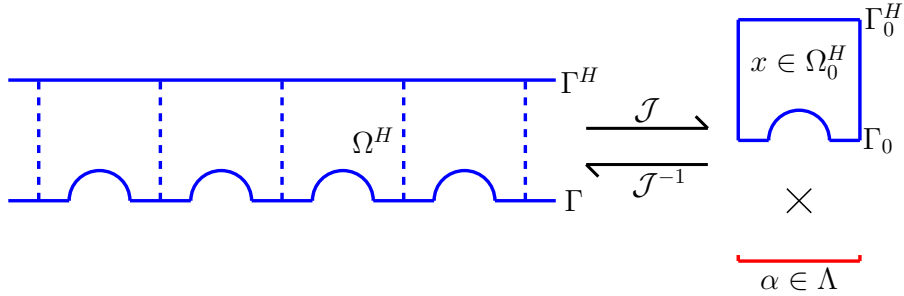

Note that the definition is easily extended to the two-dimensional domains, which are periodic in the $x_1$-dimension. For convenience, we first define the restrictions of the domains and curves in one periodicity cell:
\[
\Omega_0:=\Omega\cap[-\pi,\pi]\times\R;\quad \Omega_0^H:=\Omega^H\cap[-\pi,\pi]\times\R
\]
and
\[
\Gamma_0:=\Omega\cap[-\pi,\pi]\times\R;\quad \Omega_0^h:=\Gamma^h\cap[-\pi,\pi]\times\R.
\]
We also define the translated periodicity domains by
\[
\Omega_j:=\{x+(2\pi j,0)^\top:\,x\in\Omega_0\};\quad \Omega_j^H:=\{x+(2\pi j,0)^\top:\,x\in\Omega_0^H\}
\]
and
\[
\Gamma_j:=\{x+(2\pi j,0)^\top:\,x\in\Gamma_0\};\quad \Gamma_j^H:=\{x+(2\pi j,0)^\top:\,x\in\Gamma_0^H\}.
\]
The properties of the two dimensional Floquet-Bloch transform is concluded as follows. For a visulization please refer to Figure \ref{fig:fb}.

\begin{theorem}
\label{th:fb_2d}
The Floquet-Bloch transform is an isomorphism between $H^s_r(\Omega^H)$ and $H_0^r\left(\Lambda;H^s_\alpha(\Omega^H_0)\right)$. Given any $\phi\in H^s_r(\Omega^H)$, the function $\J\phi(\cdot,x)$ is 1-periodic with respect to $\alpha$ and $\alpha$-quasi-periodic with respect to $x_1$. The inverse operator is given by
\begin{equation}
\label{eq:def_ifb_2d}
(\J^{-1}\psi)(x+(2\pi j,0)^\top)=\int_{-1/2}^{1/2}\psi(\alpha,x)e^{\i 2\pi j\alpha}\d\alpha,\quad x\in\Omega^H_0,\,j\in\Z.
\end{equation}
\end{theorem}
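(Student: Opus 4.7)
The plan is to reduce the two-dimensional assertion to the one-dimensional result of Theorem \ref{th:fb} by regarding the $x_2$-coordinate as a parameter, since $\Omega^H$ is $2\pi$-periodic in $x_1$ only and $\Omega^H_0$ is bounded in $x_2$ (because $0<\inf\zeta\le\sup\zeta<H$). For $\phi\in C_0^\infty(\Omega^H)$, I would define the two-dimensional Floquet-Bloch transform by applying the one-dimensional transform to the $x_1$-slice:
\[
(\J\phi)(\alpha,x_1,x_2)=\sum_{j\in\Z}\phi(x_1+2\pi j,x_2)e^{\i 2\pi\alpha j},
\]
where for each fixed $x_2$ the sum is finite by compact support. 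The $\alpha$-quasi-periodicity in $x_1$ and the $1$-periodicity in $\alpha$ are then inherited directly from the one-dimensional case, since the $x_2$-variable plays no role in the summation.

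Next I would establish the norm equivalence. The key observation is that the space $H^s_\alpha(\Omega^H_0)$ of $\alpha$-quasi-periodic Sobolev functions on the strip $\Omega^H_0$ can be described, after multiplication by $e^{-\i\alpha x_1}$, as an ordinary Sobolev space on the bounded domain $\Omega^H_0$ with periodic boundary conditions on the two vertical sides. Fubini and the one-dimensional isomorphism statement of Theorem \ref{th:fb}, applied for each fixed $x_2\in(\zeta(x_1),H)$, yields the identity
\[
\int_{\Lambda}\|\J\phi(\alpha,\cdot)\|^2_{H^s_\alpha(\Omega^H_0)}\d\alpha \;\asymp\; \|\phi\|^2_{H^s(\Omega^H)}
\]
for the unweighted case $r=0$. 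For general $s,r$, one reduces to this case by interpolation and duality on the $s$-scale, and by observing that multiplication by the weight $(1+|x_1|^2)^{r/2}$ in $x_1$ transforms (modulo controlled commutators coming from differences of $j$-translates) into $\alpha$-derivatives on the transform side; this is precisely the mechanism that produces the $H^r_0(\Lambda;\cdot)$ factor. A density argument then extends the isomorphism from $C_0^\infty(\Omega^H)$ to all of $H^s_r(\Omega^H)$.

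The inverse formula \eqref{eq:def_ifb_2d} would be verified first on the dense class of smooth compactly supported functions, where it reduces to the Fourier inversion identity $\sum_{j}e^{\i 2\pi\alpha(j-j')}$ integrated against a test function; by the norm equivalence just established, this formula extends by continuity to the full space. The quasi-periodicity in $x_1$ ensures that the right-hand side of \eqref{eq:def_ifb_2d} is well-defined independently of the choice of representative in $\Omega^H_0$ for the point $x$.

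The main obstacle I anticipate is the careful handling of the weighted space $H^s_r$ for non-integer or negative $r$, because on the transform side this corresponds to the $\alpha$-smoothness scale $H^r_0(\Lambda;\cdot)$, which is most naturally accessed only for $r\in\N$ via direct differentiation. For general $r\in\R$ one must either invoke a general interpolation result or give a self-contained argument using the Fourier-like duality between decay in $j$ and smoothness in $\alpha$; in this paper it would be cleanest simply to cite the corresponding result from \cite{Lechl2016} and remark that the two-dimensional extension is obtained by tensoring with the identity in the bounded $x_2$-variable.
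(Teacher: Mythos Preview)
The paper does not actually prove Theorem~\ref{th:fb_2d}; it is simply stated as a known result, with the details referred to \cite{Lechl2016} (see the sentence just before Theorem~\ref{th:fb}). So there is no proof in the paper to compare against, and your final suggestion---to cite \cite{Lechl2016} and remark that the two-dimensional case follows by treating the bounded transverse variable as a parameter---is precisely what the paper does implicitly.

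Your sketch is reasonable, but one point deserves care. You write that you would apply the one-dimensional transform ``for each fixed $x_2\in(\zeta(x_1),H)$''. This is slightly awkward: the domain $\Omega^H$ is not a product $\R\times(a,b)$ but the region between the periodic graph $\Gamma$ and the line $\Gamma^H$, so for $x_2$ below $\sup\zeta$ the horizontal slice $\{x_1:(x_1,x_2)\in\Omega^H\}$ is a proper (periodic) subset of $\R$, not all of $\R$. The one-dimensional Theorem~\ref{th:fb} is stated for functions on $\R$, so the slice-by-slice reduction is cleanest only above $\sup\zeta$. The standard way around this (and what is done in \cite{Lechl2016}) is to work directly with the $2\pi$-periodic tiling of $\Omega^H$ by the cells $\Omega^H_j$, define $\J$ by the same series \eqref{eq:def_fb} with $x\in\Omega^H_0$, and check the norm identity on the full Sobolev space using the periodicity of the domain rather than a literal Fubini in $x_2$. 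Your commutator remark for the weight and the interpolation/duality strategy for general $r$ are both on target.
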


\begin{remark}
Note that since $\psi(\alpha,x)$ depends periodically on $\alpha$, we can easily translate the integral interval  in \eqref{eq:def_ifb_2d} as
\begin{equation}
\label{eq:def_ifb2d_v2}
(\J^{-1}\psi)(x+2\pi j)=\int_{\gamma}^{1-\gamma}\psi(\alpha,x)e^{\i 2\pi j\alpha}\d\alpha,\quad x\in\Omega^H_0,\,j\in\Z
\end{equation}
for any $\gamma\in\R$. Thus we will automatically change the interval when necessary.
\end{remark}

At the end of this subsection, we will introduce the following spaces as in \cite{Zhang2017e}. Let $K\subset\R$ be a bounded interval,  $W\subset\R^2$ be a bounded domain and $S(W)$ be a Sobolev space defined in $W$. Note that here $S(W)$ can depend on $\alpha$, for example, $H^1_\alpha(\Omega^H_0)$ in Theorem \ref{th:fb}. For any $n\in\N$ define the function space
\begin{align*}
C^n(K;S(W)):=&\left\{f\in\mathcal{D}'(K\times W):\, \text{ for all }j=0,1,\dots,n,\,\frac{\partial^j f(\alpha,\cdot)}{\partial\alpha^j}\in S(W),
\right.\\&\qquad\left.\text{moreover, }\frac{\partial^j f(\alpha,\cdot)}{\partial\alpha^j}\text{ depends continuously on $\alpha\in K$}\right\};
\end{align*}
and for $n=\infty$,
\begin{align*}
C^\infty(K;S(W)):=\bigcap_{n=1}^\infty C^n(K;S(W)).
\end{align*}
We also define the spaces $C^n_0(K;S(W))$ and  $C^\infty_0(K;S(W))$ as subspaces of $C^n(K;S(W))$ and $C^\infty(K;S(W))$, where the functions are compactly supported in $K$ with respect to $\alpha$; and $C^n_\p(K;S(W))$ and  $C^\infty_\p(K;S(W))$ as subspaces of $C^n(K;S(W))$ and $C^\infty(K;S(W))$, where the functions are periodic with respect to $\alpha$.

\subsection{Application of the Floquet-Bloch transform and results}
\label{sec:ap_fb}

In this section, we apply the Floquet-Bloch transform to the unique solution $u$ to the problem \eqref{eq:sca}. From Theorem \ref{th:wep}, the solution $u\in H_r^1(\Omega)$. Then with Theorem \ref{th:fb_2d}, the Floquet-Bloch tranform is well defined for $u$. Let $w(\alpha,x):=\J u$, then $w\in H_0^r\left(\Lambda;H^1_\alpha(\Omega^H_0)\right)$. For each fixed $\alpha\in\Lambda$, $w(\alpha,\cdot)$ is $\alpha$-quasi-periodic and satisfies
\begin{equation}
\label{eq:quasi}
\Delta w(\alpha,\cdot)+k^2 w(\alpha,\cdot)=0\text{ in }\Omega_0;\quad w(\alpha,\cdot)=(\J f)(\alpha,\cdot)\text{ on }\Gamma_0
\end{equation}
with the radiation condition, i.e., the Rayleigh expansion (see \cite{Milla1973}):
\begin{equation}
\label{eq:rayleigh}
w(\alpha,x)=\sum_{j\in\Z}\hat{w}_j(\alpha) e^{\i(\alpha+j)x_1+\i\sqrt{k^2-(\alpha+j)^2}(x_2-H)},\quad x_2\geq H.
\end{equation}
From \cite{Kirsc1993}, the quasi-periodic problems are always well-posed.

\begin{corollary}
\label{cr:w_reg}
Suppose the surface $\Gamma$ satisfies the conditions in Theorem \ref{th:wep}. Then for the solution $u|_{\Omega^h}$ (for any fixed $h\geq H$) to the  problem \eqref{eq:sca}, the Floquet-Bloch transform $w:=\J u$ is well defined and $w\in H_0^r(\Lambda;H^1_\alpha(\Omega^h_0))$. Moreover, for each fixed $\alpha$, $w(\alpha,\cdot)$ satisfies \eqref{eq:quasi}-\eqref{eq:rayleigh}.
\end{corollary}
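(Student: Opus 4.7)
The plan is to read the corollary off from the two theorems already recorded: Theorem~\ref{th:wep} provides the existence and $H^1_r$-regularity of $u$, and Theorem~\ref{th:fb_2d} provides the Floquet--Bloch transform as an isomorphism on the correct scale of spaces. What then remains is to verify that $w=\J u$ satisfies the quasi-periodic cell problem~\eqref{eq:quasi} and admits the Rayleigh expansion~\eqref{eq:rayleigh}.

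First I would settle the regularity. Theorem~\ref{th:wep} gives $u\in H^1_r(\Omega^H)$. For any $h\ge H$, the angular spectrum representation~\eqref{eq:asr} shows that, above $\Gamma^H$, the Fourier transform of $u(\cdot,x_2)$ equals $e^{\i\sqrt{k^2-\xi^2}(x_2-H)}\hat u(\xi,H)$, whose modulus is dominated by $|\hat u(\xi,H)|$. Combined with interior elliptic regularity for the Helmholtz equation in the bounded-width strip $\Omega^h\setminus\Omega^H$, this upgrades $u$ to $H^1_r(\Omega^h)$. Theorem~\ref{th:fb_2d}, applied with $\Omega^h$ in place of $\Omega^H$, then puts $w=\J u$ in $H^r_0(\Lambda;H^1_\alpha(\Omega^h_0))$.

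Next I would derive~\eqref{eq:quasi}. The operator $\J$ is a linear combination of shifts by $2\pi\Z$ in the $x_1$-direction weighted by $e^{\i 2\pi\alpha j}$, so it commutes with any constant-coefficient differential operator and with the trace onto the $2\pi$-periodic curve $\Gamma$. Applying $\J$ termwise to $\Delta u+k^2u=0$ and to $u=f$ on $\Gamma$ yields, for every $\alpha\in\Lambda$,
\[
\Delta w(\alpha,\cdot)+k^2 w(\alpha,\cdot)=0\text{ in }\Omega_0,\qquad w(\alpha,\cdot)=\J f(\alpha,\cdot)\text{ on }\Gamma_0,
\]
the quasi-periodicity in $x_1$ being precisely the one stated by Theorem~\ref{th:fb_2d}. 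For the Rayleigh expansion I would feed~\eqref{eq:asr} directly into $\J$: writing $\xi=\alpha+j$ with $\alpha\in\Lambda$, $j\in\Z$, partitioning the integral over $\R$ into integrals over translates of $\Lambda$, and interchanging the resulting sum over $j$ with the shift-sum defining $\J$ gives
\[
w(\alpha,x)=\sum_{j\in\Z}\hat u(\alpha+j,H)\,e^{\i(\alpha+j)x_1+\i\sqrt{k^2-(\alpha+j)^2}(x_2-H)},
\]
so that $\hat w_j(\alpha)=\hat u(\alpha+j,H)$, which is exactly~\eqref{eq:rayleigh}. The uniqueness result of~\cite{Kirsc1993} then confirms that $w(\alpha,\cdot)$ is the (unique) quasi-periodic solution of the cell problem.

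The main obstacle I anticipate lies in the rigorous justification of the two interchanges of summation and integration used above, and in matching the Fourier data of $u$ with the Floquet data of $w$ near the Wood anomalies $\xi=\pm k$, where the exponents $\sqrt{k^2-\xi^2}$ vanish and the coefficients $\hat w_j(\alpha)$ can develop square-root type singularities. The weighted-Sobolev index $|r|<1$ granted by Theorem~\ref{th:wep} is exactly what keeps the resulting Bochner integrals convergent and renders the interchange legitimate; this is the one place where the precise regularity hypothesis is genuinely used.
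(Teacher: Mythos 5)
Your proposal is correct and follows essentially the same route as the paper: the corollary is presented there as a direct consequence of combining Theorem~\ref{th:wep} (existence and $H^1_r$ regularity of $u$), Theorem~\ref{th:fb_2d} (Floquet--Bloch transform as an isomorphism on the weighted periodic spaces), and the standard observation that $\J$ commutes with the periodic Helmholtz operator and with the trace on $\Gamma$, which turns \eqref{eq:sca}, \eqref{eq:asr} into the cell problem \eqref{eq:quasi} with Rayleigh expansion \eqref{eq:rayleigh}. Your closing remark about Wood anomalies anticipates the refined singularity analysis of Theorem~\ref{th:reg_sca}, but it is not actually needed here since the corollary only asserts membership in $H_0^r(\Lambda;H^1_\alpha(\Omega^h_0))$ and not any smoothness in $\alpha$.
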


To prove the main results, the existence and regularity of the transformed field $w$ as described in Corollary \ref{cr:w_reg} is not sufficient. We need more detailed analysis for the field $w$. We refer to \cite{Zhang2017e} for more explicit descriptions for the regularity with respect to $\alpha$. The results are summarized as the following theorem.\\

\begin{theorem}\label{th:reg_sca}
Suppose $\Gamma\subset\R^2$ is the graph of a periodic and bounded function $\zeta$, $u$ is the solution to \eqref{eq:sca}-\eqref{eq:uprc}. Let $w$ be the Floquet-Bloch transform of $u$, then the regularity of $w$ depending on $\alpha$ are discussed with two different cases.\\

\noindent
{\bf Case I: $2k\notin\N$.} Let $k=\kappa+j$ where $\kappa\in(-1/2,1/2)\setminus\{0\}$ and $j\in\N$. Let $\alpha_0$ be either $\kappa$ or $-\kappa$, then 
\begin{equation}
\label{eq:sing_c1}
w(\alpha,x)=w_0(\alpha,x)+\sqrt{\alpha-\alpha_0}\,w_1(\alpha,x)+\sqrt{\alpha+\alpha_0}\,w_2(\alpha,x),
\end{equation}
where $w_0,\,w_1,\,w_2\in C^\infty\left(\Lambda;H^1_\alpha(\Omega^H_0)\right)$. Moreover, $w_0\in C^\infty_\p\left(\Lambda;H^1_\alpha(\Omega^H_0)\right)$ and $w_1,\,w_2\in C^\infty_0\left((\pm\alpha_0-\delta,\pm\alpha_0+\delta);H^1_\alpha(\Omega^H_0)\right)$. Moreover, the parameter $\delta>0$ is adjusted such that $[-\alpha_0-\delta,-\alpha_0+\delta]$ and $[\delta_0-\delta,\alpha_0+\delta]$ are both in the interior of $\overline{\Lambda}$. In this case,
\begin{equation}
\label{eq:u_sing_c1}
u(x)=\int_\Lambda w_0(\alpha,x)\d\alpha+\int_{\alpha_0-\delta}^{\alpha_0+\delta}\sqrt{\alpha-\alpha_0}\,w_1(\alpha,x)\d\alpha+\int_{-\alpha_0-\delta}^{-\alpha_0+\delta}\sqrt{\alpha+\alpha_0}\,w_2(\alpha,x)\d\alpha,\quad x\in\Omega^H,
\end{equation}
where $w_0$, $w_1$ and $w_2$ are extended into $\alpha$-quasi-periodic functions for $x\in\Omega^H$.
\\

\noindent
{\bf Case II: $2k\in\N$.} First when $k$ is an integer, let $\alpha_0=0$ and $\Lambda=(-1/2,1/2]$; when $k-1/2$ is an integer, let $\alpha_0=1/2$ and $\Lambda=(0,1]$. Then
\begin{equation}
\label{eq:sing_c2}
w(\alpha,x)=w_0(\alpha,x)+\sqrt{\alpha-\alpha_0}w_1(\alpha,x)+|\alpha-\alpha_0|w_2(\alpha,x),
\end{equation}
where $w_0,\,w_1,\,w_2\in C^\infty\left(\Lambda;H^1_\alpha(\Omega^H_0)\right)$. Moreover, $w_0\in C^\infty_\p\left(\Lambda;H^1_\alpha(\Omega^H_0)\right)$ and $w_1,\,w_2\in C^\infty_0\left((\alpha_0-\delta,\pm\alpha_0+\delta);H^1_\alpha(\Omega^H_0)\right)$. Also, the parameter $\delta>0$ is chosen such that $[\alpha_0-\delta,\alpha_0+\delta]$ lies in the interior of $\overline{\Lambda}$. In this case,
\begin{equation}
\label{eq:u_sing_c2}
u(x)=\int_\Lambda w_0(\alpha,x)\d\alpha+\int_{\alpha_0-\delta}^{\alpha_0+\delta}\sqrt{\alpha-\alpha_0}\,w_1(\alpha,x)\d\alpha+\int_{\alpha_0-\delta}^{\alpha_0+\delta}|\alpha-\alpha_0|\,w_2(\alpha,x)\d\alpha,\quad x\in\Omega^H,
\end{equation}
where again, $w_0$, $w_1$ and $w_2$ are extended into $\alpha$-quasi-periodic functions for $x\in\Omega^H$.
\end{theorem}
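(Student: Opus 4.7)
The plan is to analyze the family \eqref{eq:quasi}--\eqref{eq:rayleigh} parametrically in $\alpha$ and pin down where $\alpha\mapsto w(\alpha,\cdot)$ fails to be smooth. First I would rewrite each quasi-periodic problem on the bounded cell $\Omega_0^H$ by replacing the Rayleigh radiation condition with an $\alpha$-dependent Dirichlet-to-Neumann operator $T(\alpha)$ on $\Gamma_0^H$, whose symbol in the Floquet basis $\{e^{\i(\alpha+j)x_1}\}_{j\in\Z}$ is $\i\beta_j(\alpha):=\i\sqrt{k^2-(\alpha+j)^2}$. All other ingredients of the variational formulation depend smoothly on $\alpha$: the data $\J f$ lies in $C^\infty_\p(\Lambda;H^{1/2}_\alpha(\Gamma_0))$ because $f=-G(\cdot,y)|_\Gamma$ decays like $O(|x_1|^{-3/2})$ (the remark after Theorem \ref{th:wep}), and the Dirichlet quasi-periodic problem is uniquely solvable for every real $\alpha$. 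Hence the only source of $\alpha$-non-smoothness of $w$ is the branch points of the $\beta_j$, which occur exactly when $\alpha+j=\pm k$; these are the two simple points $\alpha=\pm\alpha_0$ in Case I and the single degenerate point $\alpha=\alpha_0$ (with two modes vanishing simultaneously) in Case II.

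For Case I I would localize with smooth cut-offs of width $\delta$ around each branch point $\pm\alpha_0$ and write $T(\alpha)=T_{\mathrm{reg}}(\alpha)+S_+(\alpha)+S_-(\alpha)$, where $S_\pm$ captures the single resonant Floquet mode near $\pm\alpha_0$. Using the factorization $k^2-(\alpha+j)^2=(k-\alpha-j)(k+\alpha+j)$, the singular factor of $S_\pm$ is $\i\sqrt{\alpha\mp\alpha_0}$ times an analytic, non-vanishing function of $\alpha$; hence $S_\pm$ is a rank-one perturbation of $T_{\mathrm{reg}}$ whose only non-smooth $\alpha$-dependence is an explicit scalar square root. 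A Sherman--Morrison/Woodbury expansion of the inverse solution operator then produces the decomposition $w=w_0+\sqrt{\alpha-\alpha_0}\,w_1+\sqrt{\alpha+\alpha_0}\,w_2$ of \eqref{eq:sing_c1}, with $w_1,w_2$ compactly supported near $\pm\alpha_0$ thanks to the cut-offs and $w_0$ smooth; absorbing the smooth leftovers via a partition of unity makes $w_0$ genuinely $1$-periodic in $\alpha$. The representation \eqref{eq:u_sing_c1} then follows by applying the inverse FB formula \eqref{eq:def_ifb_2d} and using the compact $\alpha$-support of $w_{1,2}$ to restrict the integration intervals.

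The main obstacle is Case II, where two Rayleigh modes coalesce at a single point $\alpha_0\in\{0,1/2\}$. Here both $\beta_{n}(\alpha)$ and $\beta_{-n}(\alpha)$ (or their half-integer analogues) vanish at $\alpha_0$, and factoring gives $\beta_{-n}(\alpha)=\sqrt{\alpha-\alpha_0}\,g_1(\alpha)$ and $\beta_{n}(\alpha)=\sqrt{\alpha_0-\alpha}\,g_2(\alpha)$ with $g_1,g_2$ analytic and non-vanishing. Since $\sqrt{\alpha_0-\alpha}=\i\sqrt{\alpha-\alpha_0}$ on one side of $\alpha_0$, a rank-two Woodbury expansion of the solution operator combines the two singular contributions into exactly a $\sqrt{\alpha-\alpha_0}$ part (from the ``in-phase'' combination of the two modes) and an $|\alpha-\alpha_0|$ part (from the product of the two square roots, which behaves as $\i|\alpha-\alpha_0|$), matching the structure \eqref{eq:sing_c2}. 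The delicate point throughout is to ensure that, after the partition-of-unity construction, the smooth remainders can be consistently absorbed into a genuinely $1$-periodic $w_0$, especially given the shifted fundamental domain $\Lambda$ chosen so that $\alpha_0$ lies in its interior; the integral representation \eqref{eq:u_sing_c2} then follows as in Case I.
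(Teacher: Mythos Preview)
The paper does not actually prove this theorem: it states the result and immediately refers to \cite{Zhang2017e} for the regularity analysis. Your overall strategy---reduce each quasi-periodic problem to the bounded cell via the $\alpha$-dependent DtN map, identify the Rayleigh--Wood branch points of $\beta_j(\alpha)=\sqrt{k^2-(\alpha+j)^2}$ as the only source of non-smoothness, and extract the singular part by a finite-rank (Sherman--Morrison/Woodbury) perturbation of the solution operator---is precisely the approach one expects, and is presumably what the cited reference carries out. Your handling of Case~II, reading the $|\alpha-\alpha_0|$ term as the product of the two coalescing square roots, is also the correct intuition.

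There is, however, one step that does not go through as written. You assert $\J f\in C^\infty_\p\!\left(\Lambda;H^{1/2}_\alpha(\Gamma_0)\right)$ on the basis that $f=-G(\cdot,y)|_\Gamma$ decays like $O(|x_1|^{-3/2})$. That reasoning cannot work: by Theorem~\ref{th:fb}, $H^s_r$-decay only buys $H^r_0$-regularity in $\alpha$, so $r<1$ is far from $C^\infty$. Worse, the conclusion itself is false. Writing
\[
G(x,y)=\frac{1}{2\pi}\int_\R e^{\i\xi(x_1-y_1)+\i\sqrt{k^2-\xi^2}\,y_2}\,\frac{\sin\!\big(\sqrt{k^2-\xi^2}\,x_2\big)}{\sqrt{k^2-\xi^2}}\,\d\xi
\]
and passing to the FB transform shows that $\J f$ itself inherits the branch-point structure through the factor $e^{\i\sqrt{k^2-(\alpha+j)^2}\,y_2}$; the $\mathrm{sinc}$ factor is entire, but the exponential is not. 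So the data already carries $\sqrt{\alpha\mp\alpha_0}$ singularities of the \emph{same} type as the DtN symbol, and these must be tracked through your perturbation expansion alongside those of $T(\alpha)$. This does not break the final decomposition---the extra contributions are absorbed into $w_1,w_2$---but your argument as stated attributes all the singular behaviour to the operator and none to the right-hand side, which is incorrect. A clean way to sidestep this is to run the argument for the total field $u+G(\cdot,y)$, whose source $-\delta_y$ has a genuinely analytic FB transform, and then subtract off the (explicitly computable) decomposition of $\J G$.
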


From now on, define the set $\S$ and interval $\Lambda$ for different $k$:
\[
\S:=\begin{cases}
\{-\kappa,\kappa\},\quad 2k\not\in\N;\\
\{0\},\quad k\in\N;\\
\{1/2\},\quad k\in\left\{n+\frac{1}{2}:\,n\in\N\right\}.
\end{cases}\quad \Lambda=\begin{cases}
(-1/2,1/2],\quad k\not\in\left\{n+\frac{1}{2}:\,n\in\N\right\};\\
(0,1],\quad\text{ otherwise}.
\end{cases}
\]
We also define the related set $J_-(\alpha_0)$ and $J_+(\alpha_0)$:
\[
J_-(\alpha_0):=\{j\in\Z:\,|\alpha_0+j|<k\},\quad J_+(\alpha_0):=\{j\in\Z:\,|\alpha_0+j|>k\},\quad J_0(\alpha_0):=\{j\in\Z:\,|\alpha_0+j|=k\}
\]
for any $\alpha_0\in \S$. We can easily check that $J_-(\alpha_0)$ and $J_0(\alpha_0)$ are finite sets and $J_+(\alpha_0)$ is infinite.

\section{The main theorems and the sketch of proof}
\label{th:skt}

To make the paper more readable, we here present the main theorem as well as the sketch of proof in this section.  First define the radiation condition as follows.

\begin{definition}[Radiation Condition]
\label{def:rc}
A function $u$ defined in $\Omega$ satisfies the radiation condtion, if
for a fixed $h>0$,
\begin{enumerate}[label=(\Alph*)]
    \item the restriction $u\big|_{\Omega^{H+h}}$ decays in the weak form:
    \begin{equation}
    \label{eq:decay_weak}
    \|u\|_{H^1\left(\Omega^{H+h}_j\right)}\leq C(1+|j|)^{-3/2};
    \end{equation}
    \item the restriction $u\big|_{U_{H+h}}$ satisfies the half-space Sommerfeld radiation condition, i.e.,
\begin{equation}
\label{eq:SRC}
\frac{\partial u}{\partial r}-\i k u=O\left(r^{-3/2}\right),\quad u=O\left(r^{-1/2}\right),\quad r\rightarrow\infty
\end{equation}
where
\[
r=\left|x-(0,H)^\top\right|=\sqrt{x_1^2+(x_2-H)^2}
\]
holds for any $x=(x_1,x_2)\in U_{H+h}$ for any $h>0$.
\end{enumerate}
For the notations and domains we refer to Figure \ref{fig:src}.
\end{definition}

\begin{theorem}
\label{th:SRC}Let $\Gamma$ be the graph of a periodic bounded function. 
The unique solution $u\in H^1_{r}(\Omega)$ ($|r|<1$) to \eqref{eq:sca} - \eqref{eq:uprc} also satisfies the radiation condition defined in Definition \ref{def:rc}.
\end{theorem}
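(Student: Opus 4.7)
The plan is to take the Floquet--Bloch decompositions \eqref{eq:u_sing_c1} and \eqref{eq:u_sing_c2} from Theorem \ref{th:reg_sca} as the starting point. These already write $u$ on $\Omega^H$ as a sum $u = u_0 + u_1 + u_2$, where $u_0 = \int_\Lambda w_0(\alpha,x)\d\alpha$ comes from the smooth $\alpha$-periodic piece and $u_1, u_2$ are integrals over short neighbourhoods of the Wood anomalies $\pm\alpha_0$, carrying the branch-type factors $\sqrt{\alpha\mp\alpha_0}$ (or $|\alpha-\alpha_0|$) against smooth, compactly supported densities. Part (A) and Part (B) then reduce to studying, respectively, the $j$-th Fourier coefficient and the far-field of each piece.

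For Part (A), the aim is to show $\|u\|_{H^1(\Omega^{H+h}_j)} \lesssim (1+|j|)^{-3/2}$. By \eqref{eq:def_ifb2d_v2}, $u|_{\Omega^{H+h}_j}$ is the $j$-th Fourier coefficient in $\alpha$ of the corresponding Bloch-variable integrand. For $u_0$, repeated integration by parts against $e^{\i 2\pi j\alpha}$ is legitimate because $w_0 \in C^\infty_\p(\Lambda;H^1_\alpha(\Omega_0^{H+h}))$, yielding decay faster than any polynomial. For $u_1, u_2$, the integrands are of the form $\sqrt{\alpha-\alpha_0}\,w_1(\alpha,\cdot)$ with $w_1 \in C^\infty_0$; expanding $w_1$ by Taylor around $\alpha_0$ and using the standard Fourier asymptotic $\int \sqrt{\alpha-\alpha_0}\,\chi(\alpha)e^{\i 2\pi j\alpha}\d\alpha = O(|j|^{-3/2})$ (one integration by parts away from the branch point, and the Hankel-type singular contribution at $\alpha_0$ contributing the $|j|^{-3/2}$ rate) delivers \eqref{eq:decay_weak}. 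The $|\alpha-\alpha_0|$ case in Case~II is handled identically and even yields $O(|j|^{-2})$.

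For Part (B), the aim is to prove the improved half-plane Sommerfeld condition \eqref{eq:SRC} on $U_{H+h}$. I would first fold the Rayleigh expansion \eqref{eq:rayleigh} into the inverse transform: for $x_2 \geq H+h$, interchanging the sum over $j$ and the integral over $\alpha$ and changing variable $\xi = \alpha+j$ converts $u(x)$ into an angular-spectrum integral
\begin{equation*}
u(x) = \int_\R \widehat{u}(\xi,H)\,e^{\i\xi x_1 + \i\sqrt{k^2-\xi^2}(x_2-H)}\d\xi,
\end{equation*}
where the singular structure of $w$ at $\alpha=\pm\alpha_0$ becomes a $\sqrt{\xi\mp k}$-type singularity of $\widehat{u}(\xi,H)$ at the propagation cutoffs $\xi = \pm k$. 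The evanescent portion $|\xi|>k$ decays exponentially in $x_2-H$ and contributes $O(r^{-N})$ on $U_{H+h}$ for every $N$. For the propagating portion $|\xi|<k$, parametrise $\xi = k\sin\theta$ and $x = (r\sin\phi,\,H+r\cos\phi)$ with $\phi\in(-\pi/2,\pi/2)$; the integral becomes a Herglotz-type integral
\begin{equation*}
\int_{-\pi/2}^{\pi/2} g(\theta)\,e^{\i k r \cos(\theta-\phi)}\d\theta,
\end{equation*}
whose density $g$ is smooth on the interior but carries integrable $\sqrt{\cdot}$ singularities at $\theta = \pm\pi/2$. Leading-order stationary phase at $\theta=\phi$ produces the Sommerfeld leading term $u = O(r^{-1/2})$; the sharper estimate $\partial_r u - \i k u = O(r^{-3/2})$ then follows from the next-order term in the stationary phase expansion, provided one can show that the endpoint contributions at $\theta = \pm\pi/2$ are already $O(r^{-3/2})$ thanks to the $\sqrt{\cdot}$ vanishing of $g$ there.

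The main obstacle is precisely this last step: the endpoint/Wood-anomaly contribution. In classical bounded-obstacle Sommerfeld theory the density is smooth and the endpoints are irrelevant, but here the branch singularity of $\widehat{u}$ at $\xi=\pm k$ is the whole reason that periodic-surface scattering is subtler than obstacle scattering, and it is what forces the existing literature to settle for the weaker $o(r^{-1/2})$ rate. To extract the gain to $O(r^{-3/2})$ one must quantify cancellation in the boundary terms generated by integration by parts, using that the $\sqrt{\alpha\mp\alpha_0}$ prefactor vanishes at the singular point and that the remaining smooth factor is compactly supported. I would follow the three-part splitting announced by the author in Sections~\ref{sec:rc_0}--\ref{sec:rc_2}: treat the regular/periodic piece (trivial stationary phase), the propagating bulk piece (standard stationary phase), and finally the endpoint/branch-singularity piece (refined asymptotic analysis of oscillatory integrals with $\sqrt{\cdot}$ singularities), combining them in Section~\ref{sec:proof_main}. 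The extension to locally perturbed surfaces in Section~\ref{sec:loc_sur} should then follow by writing the perturbed solution as the periodic solution plus a compactly supported source correction.
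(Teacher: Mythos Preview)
Your Part~(A) outline is essentially the paper's argument and is fine. Part~(B), however, contains two genuine gaps.

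First, you mislocate the singularities of $\widehat{u}(\xi,H)$. Unfolding the Floquet variable $\alpha\in\Lambda$ to $\xi=\alpha+j\in\R$ does not send the single branch point $\alpha=\alpha_0$ to the cutoff $\xi=\pm k$ only; it sends it to the \emph{entire lattice} $\xi=\alpha_0+j$, $j\in\Z$. The paper makes this explicit in \eqref{eq:decom_u}: the density $h(\xi)$ carries a $\sqrt{\xi-(\alpha_0+j)}$ singularity in every window $(\alpha_0+j-\delta,\alpha_0+j+\delta)$. Only the indices $j\in J_0(\alpha_0)$ place that singularity at the propagation cutoff $\pm k$; for $j\in J_-(\alpha_0)$ the singularity sits strictly inside the propagating interval $(-k,k)$, and for $j\in J_+(\alpha_0)$ it sits in the evanescent region. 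Your stationary-phase sketch treats the density on $(-\pi/2,\pi/2)$ as ``smooth on the interior'', which is false: the interior branch points at $\theta=\arccos((\alpha_0+j)/k)$, $j\in J_-$, collide with the stationary point $\theta=\theta_*$ as $\theta_*$ varies, and extracting $O(r^{-3/2})$ from $\partial_r v-\i k v$ in that regime is exactly the content of Theorems~\ref{th:rc_v111}--\ref{th:rc_v112}. This is not a standard next-order stationary-phase term; the paper needs the estimates on the integrals $I^C_1$, $I^C_2$ of Class~C (Appendix~B.3), which control oscillatory integrals whose amplitude has a square-root singularity at a point that may approach the stationary point.

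Second, your claim that the evanescent portion $|\xi|>k$ ``contributes $O(r^{-N})$ on $U_{H+h}$ for every $N$'' is wrong. Exponential decay in $x_2-H$ gives nothing when $x_2-H$ stays near $h$ and $|x_1|\to\infty$; in that regime $r\sim|x_1|$ and the only decay available is oscillatory in $x_1$. The paper splits according to whether $x_2-H>r/2$ (Theorem~\ref{th:decay_rc_v121}, genuine exponential decay) or $x_2-H\le r/2$ (Theorem~\ref{th:decay_rc_v122}, decay $r^{-3/2}$ via the Class-A oscillatory estimate of Corollary~\ref{cr:decay_inta2}, with careful tracking of the $j$-dependence to sum over $J_+$). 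Your outline omits this entirely. The same issue recurs, in a harder form, for $j\in J_0$ on the evanescent side (Section~\ref{sec:v13_2}), where the exponent $\sqrt{(\alpha+j)^2-k^2}$ itself degenerates at $\alpha=\alpha_0$.
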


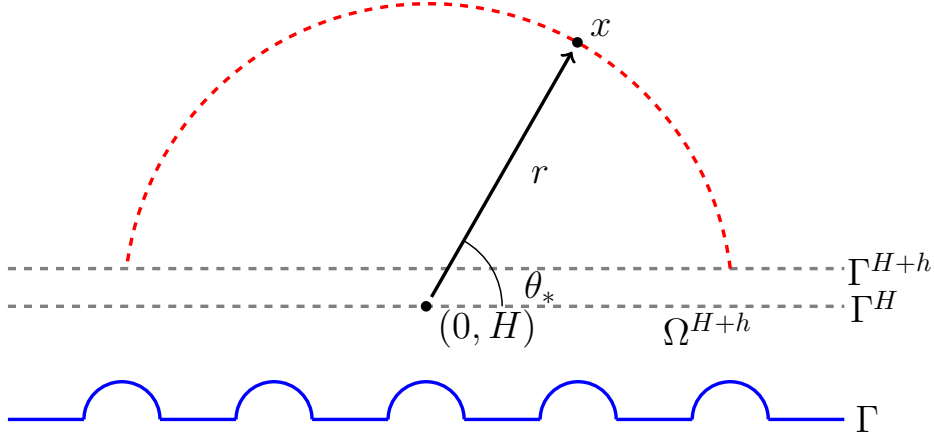
\begin{figure}[h]
\centering
\begin{tikzpicture}[scale=1, transform shape]
\draw[blue,line width=0.45mm](-5.5,0) -- (-4.5,0);
\draw[blue,line width=0.45mm](-3.5,0) arc (0:180:.5);
\draw[blue,line width=0.45mm](-3.5,0) -- (-2.5,0);
\draw[blue,line width=0.45mm](-1.5,0) arc (0:180:.5);
\draw[blue,line width=0.45mm](-1.5,0) -- (-0.5,0);
\draw[blue,line width=0.45mm](0.5,0) arc (0:180:.5);
\draw[blue,line width=0.45mm](0.5,0) -- (1.5,0);
\draw[blue,line width=0.45mm](2.5,0) arc (0:180:.5);
\draw[blue,line width=0.45mm](2.5,0) -- (3.5,0);
\draw[blue,line width=0.45mm](4.5,0) arc (0:180:.5);
\draw[blue,line width=0.45mm](4.5,0) -- (5.5,0);
\draw[black!50, style=dashed, line width=0.45mm] (-5.5,1.5) -- (5.5,1.5);
\draw[black!50, style=dashed, line width=0.45mm] (-5.5,2) -- (5.5,2);
\draw(3.7,1.2) node {\Large{$\Omega^{H+h}$}};
\draw(5.8,0) node {\Large{$\Gamma$}};
\draw(5.9,1.5) node {\Large{$\Gamma^H$}};
\draw(6.1,2) node {\Large{$\Gamma^{H+h}$}};
\draw[red, style=dashed,,line width=0.45mm](4,2) arc (7:173:4);
\fill[black] (0,1.5) circle(0.07);
\draw(0.8,1.2) node {\Large{{$(0,H)$}}};
\fill[black] (1.99,5) circle(0.07);
\draw(2.3,5.2) node {\Large{{$x$}}};
\node (A) at (0, 1.5) {};
\node (B) at (1.99, 5) {};
\draw [->, line width=0.45mm] (A) -- (B);
\draw[black,line width=0.2mm] (1,1.5) arc (0:60:1);
\draw(1.5,1.75) node {\Large{$\theta_*$}};
\draw(1.5,3.25) node {\Large{$r$}};
\end{tikzpicture}
\label{fig:src}
\caption{Notations and definitions for the structure.}
\end{figure}

To prove Theorem \ref{th:SRC}, we begin with the representation \eqref{eq:u_sing_c1} and \eqref{eq:u_sing_c2} with different $k$'s. With the $\alpha$-quasi-periodicities of the functions, the solution $u$ is decomposed into integrals in the following three forms:
\begin{align}
\label{eq:def_u0}
&u_0(x+(2\pi j,0)^\top):=\int_\Lambda w_0(\alpha,x)e^{\i 2\pi\alpha}\d\alpha,\quad w_0\in C^\infty_\p\left(\Lambda;H^1_\alpha(\Omega^{H+h}_0)\right);\\
\label{eq:def_u1}
&u_1(x+(2\pi j,0)^\top):=\int_{\alpha_0-\delta}^{\alpha_0+\delta} \sqrt{\alpha-\alpha_0}w_1(\alpha,x)e^{\i 2\pi\alpha}\d\alpha,\quad w_1\in C^\infty_0\left((\alpha_0-\delta,\alpha_0+\delta);H^1_\alpha(\Omega^{H+h}_0)\right);\\
\label{eq:def_u2}
&u_2(x+(2\pi j,0)^\top):=\int_{\alpha_0-\delta}^{\alpha_0+\delta} |\alpha-\alpha_0|w_2(\alpha,x)e^{\i 2\pi\alpha}\d\alpha,\quad w_2\in C^\infty_0\left((\alpha_0-\delta,\alpha_0+\delta);H^1_\alpha(\Omega^{H+h}_0)\right).
\end{align}

We also note that \eqref{eq:def_u0}, \eqref{eq:def_u1} and \eqref{eq:def_u2}  only describe the solutions restricted in $\Omega^{H+h}$. For the extension of the solution  into the half space $U_H$, we need the apply the UPRC \eqref{eq:u_sing_c1}
\[
u(x):=2\int_{\Gamma^H}\frac{\partial\Phi(x,y)}{\partial y_2}u(y)\d s(y),\quad x_2>H.
\]
Since $u$ is decomposed into the functions in terms of $u_0$, $u_1$ and $u_2$, we need to study the radiation condition for
 \begin{equation}
\label{eq:def_v}
v_j(x):=2\int_{\Gamma^H}\frac{\partial\Phi(x,y)}{\partial y_2}u_j(y)\d s(y),\quad x_2>H
\end{equation}
where $j=0,1,2$. We will need to study the decay rate of these three functions, as well as the radiation conditions for these functions, when $r=|x-(0,H)^\top|\rightarrow\infty$. \\


\noindent
For the {\bf proof for Part (A)}, we simply study the asymptotic behaviour of  the terms \eqref{eq:def_u0}, \eqref{eq:def_u1} and \eqref{eq:def_u2} in Section \ref{sec:decay_uj}, based on the analysis of some integral in Section \ref{sec:fourier}. We will prove that when $r\rightarrow\infty$, in the weak sense,
\begin{itemize}
\item the function $|u_0(x)|$ decays super algebraically, see Section \ref{sec:decay_0};\\
\item the function $|u_1(x)|$ decays at the rate of $r^{-3/2}$, see Section \ref{sec:decay_1};\\
\item the function $|u_0(x)|$ decays at the rate of $r^{-2}$, see Section \ref{sec:decay_2}.
\end{itemize}
Moreover, we also obtain the decay of $|u_0(x)|$ restricted on $\Gamma^H$ with above rates in the strong sense, which is also useful in the proof for Part (B).
\\

\noindent
The {\bf proof for Part (B)} is the more challenging part, where we prove the radiation conditions for $v_0$, $v_1$ and $v_2$. {\bf The proof for $v_0$} simply follows the proof in \cite{Kirsc2022}, using the decay rate of $u_0$ given by Section \ref{sec:decay_0}. We refer to Section \ref{sec:rc_0} for the details. {\bf The proofs for $v_1$ and $v_2$ }are more complex. Since the decay rates of $u_1$ and $u_2$ are too slow, the method for $v_0$ no longer works thus new approaches will be introduced.

 Take $v_1$ as an example. Recall that $v_1$ is defined by \eqref{eq:def_v} with $u_1$ defined by \eqref{eq:def_u1}. Assume that $w_1(\alpha,x)\big|_{\Gamma^H_0}\in C_0^\infty\left((\alpha_0-\delta,\alpha_0+\delta);C^2(\Gamma^H_0)\right)$  has the Fourier series
\[
w_1(\alpha,x)\big|_{\Gamma^H_0}=\sum_{j\in\Z}g_j(\alpha)e^{\i (\alpha+j)x_1},
\]
where $g_j(\alpha)$ defined by
\[
g_j(\alpha)=\frac{1}{2\pi}\int_{\alpha_0-\delta}^{\alpha_0+\delta}\left[w_1(\alpha,x)\big|_{\Gamma^H_0}\right]e^{-\i(\alpha+j)x_1}\d x_1\in C^\infty_0(\alpha_0-\delta,\alpha_0+\delta)
\]
and decays super-algebraically with respect to $j\in\Z$. Then
\[
u_1(y_1,H)=\int_{\alpha_0-\delta}^{\alpha_0+\delta} \sqrt{\alpha-\alpha_0} w_1(\alpha,y)\d\alpha=\sum_{j\in\Z}\int_{\alpha_0-\delta}^{\alpha_0+\delta}\sqrt{\alpha-\alpha_0}g_j(\alpha)e^{\i(\alpha+j)y_1}\d\alpha,
\]
where the interchanging of the summation and integration is guaranteed by the decay of $g_j(\alpha)$. Define
\[
h(\alpha):=\begin{cases}\displaystyle
\sqrt{\alpha-j-\alpha_0}g_j(\alpha-j)\quad\text{ when }\alpha\in(\alpha_0+j-\delta,\alpha+j+\delta),\,\forall\,j\in\Z;\\
\displaystyle
0,\quad\text{ when }\alpha\not\in\cup_{j\in\Z}(\alpha_0+j-\delta,\alpha+j+\delta).
\end{cases}
\]
and it equals to $0$ for other cases. Then
\[
u_1(y_1,H)=\int_{\R}h(\alpha)e^{\i\alpha y_1}\d\alpha\quad\Rightarrow\quad \F[\phi](\xi)=h(\xi)
\]
where $\F$ is the Fourier transform. On the other hand, when we fix $x_2\geq H+h$,
\[
\left.\frac{\partial \Phi(x,y)}{\partial y_2}\right|_{\Gamma^H}=\int_\R e^{\i\xi (x_1-y_1)+\i\sqrt{k^2-\xi^2}(x_2-H)}\d\xi:=\tilde{\Phi}(x_1-y_1)
\]
which implies that
\[
\F\left[\tilde{\Phi}\right](\xi)=e^{\i\sqrt{k^2-\xi^2}(x_2-H)}.
\]
Since
\[
v_1(x)=\int_\R \tilde{\Phi}(x_1-y_1)u_1(y_1,H)\d y_1=\left(\tilde{\Phi}*\phi\right)(x_1)
\]
where $*$ is the convolution operator. From the convolution theorem, the Fourier transform of $u$ has the form of
\[
\F[v_1](\xi)=\F\left[\tilde{\Phi}\right](\xi)\F[u_1(\cdot,H)](\xi)=h(\xi)e^{\i\sqrt{
k^2-\xi^2}(x_2-H)}.
\]
Thus $u$ has the form of the inverse Fourier transform
\begin{align}
v_1(x)&=\F^{-1}\F[v_1](x)=\int_\R h(\xi)e^{\i\xi x_1+\i\sqrt{
k^2-\xi^2}(x_2-H)}\d\xi\nonumber\\
\label{eq:decom_u}
&=\sum_{j\in\Z}\int_{\alpha_0-\delta}^{\alpha_0+\delta}\sqrt{\alpha-\alpha_0}g_j(\alpha)e^{\i(\alpha+j)x_1+\i\sqrt{k^2-(\alpha+j)^2}(x_2-H)}\d\alpha.
\end{align}
From \eqref{eq:decom_u}, $v_1$ is decomposed the functions in terms of
\[
v_1^{(j)}(x):=\int_{\alpha_0-\delta}^{\alpha_0+\delta}\sqrt{\alpha-\alpha_0}g_j(\alpha)e^{\i(\alpha+j)x_1+\i\sqrt{k^2-(\alpha+j)^2}(x_2-H)}\d\alpha,\quad j\in\Z.
\]
Similarly for $v_2$, it is decomposed into:
\[
v_2^{(j)}(x):=\int_{\alpha_0-\delta}^{\alpha_0+\delta}|\alpha-\alpha_0|g_j(\alpha)e^{\i(\alpha+j)x_1+\i\sqrt{k^2-(\alpha+j)^2}(x_2-H)}\d\alpha,\quad j\in\Z.
\]
Thus we need to study the radiation conditions for $v_1^{(j)}$ and $v_2^{(j)}$, respectively. For the investigation for $v_1^{(j)}$ we refer to Section \ref{sec:rc_1} and for $v_2^{(j)}$ we refer to \ref{sec:rc_2}. Note that with different choices of $j$, the signs of $k^2-(\alpha+j)^2$  are also different. Thus we study the cases
\begin{itemize}
\item when $j\in J_-(\alpha_0)$ in Section \ref{sec:v1} for $v_1^{(j)}$ and in Section \ref{sec:v1_2} for $v_2^{(j)}$; 
\item when $j\in J_+(\alpha_0)$ in Sectin \ref{sec:v2} for $v_1^{(j)}$ and in Section \ref{sec:v2_2} for $v_2^{(j)}$; 
\item when $j\in J_0(\alpha_0)$ in Sectin \ref{sec:v3} for $v_1^{(j)}$ and in Section \ref{sec:v3_2} for $v_2^{(j)}$.
\end{itemize}
\vspace{0.2cm}

\noindent
In Section \ref{sec:proof_main}, we  {\bf finish the proof for Part (B)} by summarizing the results from Section \ref{sec:rc_0}, \ref{sec:rc_1} and \ref{sec:rc_2}. For the extension to locally perturbed periodic surfaces, we refer to Section \ref{sec:loc_sur}.\\

\begin{remark}
From \eqref{eq:decom_u}, the function $v_1$ (also $v_2$) is decomposed into a finite number of Herglotz wave function and a function which decays exponentially with respect to $x_2$.  Thus part of the paper can be treated to the study of the radiation condition of the Herglotz wave functions with centain density functions in certain directions.
\end{remark}

\section{Decay rate of functions $u_0,\,u_1,\,u_2$ in $\Omega^{H+h}$}
\label{sec:decay_uj}

In this section, we prove Part (A) in Theorem \ref{th:SRC}, i.e., to study the decay of the functions $u_0,u_1,u_2$ when $|x_1|\rightarrow\infty$, in the domain $\Omega^{H+h}:=\Omega\cap\R\times(-\infty,H+h]$. Here $h>0$ is a fixed small value. Thus $0<x_2\leq H+h$ is uniformly bounded.



\subsection{Decay of $u_0$}
\label{sec:decay_0}

Recall the definition of $u_0$ in \eqref{eq:def_u0}. Fix $x_0\in\Omega^{H+h}_0$ and let $x=x_0+(2\pi j,0)^\top$ for any $j\in\Z\setminus\{0\}$, then
\[
u_0(x)=\int_\Lambda w(\alpha,x_0)e^{\i 2\pi\alpha j}\d\alpha.
\]
Since $w\in C^\infty_\p(\Lambda;H^1_\alpha(\Omega^{H+h}_0))$, with integration by parts,
\[
u_0(x)=\int_{\Lambda}w(\alpha,x_0) e^{\i 2\pi\alpha j}\d\alpha=(-\i 2\pi j)^{-m}\int_{\Lambda}\frac{\partial^m}{\partial\alpha^m}w(\alpha,x_0) e^{\i 2\pi\alpha j}\d\alpha.
\]
This implies that $u_0$ decays super-algebraically when $|x|\rightarrow\infty$. 

\begin{theorem}
\label{th:decay_0}
Suppose $u_0$ is defined by  \eqref{eq:def_u0} and $w\in C^\infty_\p(\Lambda;H^1_\alpha(\Omega^{H+h}_0))$. Then $u_0$  decays super-algebraically, i.e., for any $m\in\N$, there is a constant $C>0$ such that
\[
|u_0(x)|\leq C(1+|x|)^{-m}
\]
where $C$ depends on the unique point $x_0\in \Omega^{H+h}_0$ such that $x=x_0+(2\pi j,0)^\top$ for some $j\in\Z$. We can also easily estimate that
\[
\|u_0\|_{H^1(\Omega_j^{H+h})}\leq C(1+|j|)^{-m}
\]
holds uniformly for any $j\in\Z$, and $C$ only depends on $m$.
\end{theorem}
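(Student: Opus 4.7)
The plan is to exploit the $\alpha$-periodicity of $w_0$ by repeated integration by parts against the oscillatory kernel $e^{\i 2\pi\alpha j}$. First I would fix $x_0\in\Omega^{H+h}_0$ and write $x=x_0+(2\pi j,0)^\top$ with $j\in\Z$; then integrating by parts $m$ times in $\alpha$ kills all endpoint contributions, since $w_0\in C^\infty_\p(\Lambda;H^1_\alpha(\Omega^{H+h}_0))$ is $1$-periodic in $\alpha$ and so are all of its $\alpha$-derivatives. Each integration produces a factor $(-\i 2\pi j)^{-1}$, so
\[
u_0(x_0+(2\pi j,0)^\top)=(-\i 2\pi j)^{-m}\int_\Lambda \partial_\alpha^m w_0(\alpha,x_0)\,e^{\i 2\pi\alpha j}\d\alpha,
\]
which is exactly the identity sketched in the paragraph preceding the theorem.

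For the cell-wise $H^1$ bound I would take the $H^1(\Omega^{H+h}_0)$-norm in $x_0$ on both sides, use translation invariance of the norm under $x_0\mapsto x_0+(2\pi j,0)^\top$, and apply Minkowski's integral inequality to pull the norm inside the $\alpha$-integral. The integrand $\|\partial_\alpha^m w_0(\alpha,\cdot)\|_{H^1(\Omega^{H+h}_0)}$ is uniformly bounded in $\alpha\in\Lambda$ by the very definition of $C^\infty_\p$, so integration over the bounded interval $\Lambda$ yields $\|u_0\|_{H^1(\Omega_j^{H+h})}\leq C_m(1+|j|)^{-m}$ as claimed. The pointwise bound additionally uses that $\partial_\alpha^m w_0(\alpha,\cdot)$ is again a quasi-periodic Helmholtz solution in the interior of $\Omega^{H+h}_0$, so by interior elliptic regularity it is smooth in $x$ away from $\Gamma_0\cup\Gamma_0^{H+h}$; evaluating at the fixed point $x_0$ and taking the supremum in $\alpha$ gives the pointwise decay with a constant depending on $x_0$ through its distance to the boundary of $\Omega^{H+h}_0$.

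The main subtlety I expect is that the target space $H^1_\alpha(\Omega^{H+h}_0)$ itself depends on $\alpha$, so differentiating $w_0$ in $\alpha$ inside a fixed Banach space is not literally legal. The standard remedy, which I would invoke, is to introduce $\tilde{w}_0(\alpha,x):=e^{-\i\alpha x_1}w_0(\alpha,x)\in C^\infty_\p(\Lambda;H^1_\p(\Omega^{H+h}_0))$, carry out the $\alpha$-differentiations for $\tilde{w}_0$ in the $\alpha$-independent periodic Sobolev space, and then reinsert the factor $e^{\i\alpha x_1}$; the Leibniz rule only introduces bounded polynomial-in-$x_1$ factors on the reference cell $x_1\in[-\pi,\pi]$, which do not affect the final estimate.
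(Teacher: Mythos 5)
Your proof is correct and follows the same route as the paper: repeated integration by parts in $\alpha$ against the oscillatory factor $e^{\i 2\pi\alpha j}$, with the periodicity of $w_0$ and of all its $\alpha$-derivatives eliminating the endpoint contributions and yielding the $(2\pi j)^{-m}$ prefactor. You in fact supply rigor that the paper leaves implicit --- notably the Minkowski-inequality step for the $H^1(\Omega_j^{H+h})$ bound and the observation that one should conjugate away the quasi-periodic phase $e^{\i\alpha x_1}$ before differentiating in $\alpha$, since the nominal target space $H^1_\alpha(\Omega^{H+h}_0)$ varies with $\alpha$.
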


Since we will need the decay rate of $u_0\big|_{\Gamma^H}$ in the strong form later, with the interior regularity property for elliptic equations (see \cite{ Evans1998}), we have the following corollary.

\begin{corollary}
\label{cr:decay_0}
Suppose $u_0$ is defined by  \eqref{eq:def_u0} and $w\in C^\infty_\p(\Lambda;H^1_\alpha(\Omega^{H+h}_0))$. Then $u_0\big|_{\Gamma^H}$  decays super-algebraically, i.e., for any $m\in\N$, there is a constant $C>0$ which does not depend on $x$ such that
\[
|u_0(x_1,H)|\leq C(1+|x_1|)^{-m}
\]
holds uniformly for $x_1\in H$ with fixed $m\in\N$.
\end{corollary}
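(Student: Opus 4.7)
The plan is to upgrade the weak cell-wise decay of Theorem \ref{th:decay_0} to a pointwise decay on $\Gamma^H$ via interior elliptic regularity, exploiting the fact that $\Gamma^H$ lies strictly in the interior of the strip $\Omega^{H+h}$ (it has distance $h$ from $\Gamma^{H+h}$ above and distance at least $H-\sup\zeta>0$ from $\Gamma$ below).

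First I would note that $u_0$ solves a Helmholtz equation in $\Omega^{H+h}$. Indeed, for each fixed $\alpha\in\Lambda$, the quasi-periodic component $w_0(\alpha,\cdot)$ appearing in the decomposition \eqref{eq:sing_c1} satisfies $\Delta w_0(\alpha,\cdot)+k^2 w_0(\alpha,\cdot)=0$ in $\Omega_0^{H+h}$ (since $w=\J u$ does by \eqref{eq:quasi}, and the singular parts $\sqrt{\alpha\mp\alpha_0}\,w_{1,2}$ likewise satisfy the equation for each $\alpha$, so $w_0$ alone must too). Because the dependence on $\alpha$ is $C^\infty_\p$ with values in $H^1_\alpha(\Omega_0^{H+h})$, one can interchange $\Delta+k^2$ with the $\alpha$-integral in \eqref{eq:def_u0}, and the $\alpha$-quasi-periodic extension preserves the equation across cell interfaces. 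Hence $\Delta u_0+k^2 u_0=0$ in $\Omega^{H+h}$.

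Next, fix $x=(x_1,H)\in\Gamma^H$ and choose $\rho>0$, independent of $x_1$, small enough that $B_{2\rho}(x)\subset\Omega^{H+h}$. By standard interior regularity for the Helmholtz equation together with the Sobolev embedding $H^2\hookrightarrow C^0$ in two dimensions,
\[
|u_0(x)|\leq C_\rho\,\|u_0\|_{H^2(B_\rho(x))}\leq C'_\rho\,\|u_0\|_{H^1(B_{2\rho}(x))},
\]
with constants depending only on $\rho$ and $k$ (not on $x_1$). Writing $x_1=2\pi j_\ast+r$ with $j_\ast\in\Z$ and $|r|\leq\pi$, the ball $B_{2\rho}(x)$ is contained in a finite union $\bigcup_{|j-j_\ast|\leq N_\rho}\Omega_j^{H+h}$ whose cardinality $N_\rho$ is independent of $x_1$. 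Applying Theorem \ref{th:decay_0} term by term,
\[
\|u_0\|_{H^1(B_{2\rho}(x))}\leq\sum_{|j-j_\ast|\leq N_\rho}\|u_0\|_{H^1(\Omega_j^{H+h})}\leq C_m\sum_{|j-j_\ast|\leq N_\rho}(1+|j|)^{-m}\leq C'_m(1+|x_1|)^{-m},
\]
since $|j|\geq|j_\ast|-N_\rho\gtrsim|x_1|$ uniformly in the sum. Combining the two estimates gives the stated bound.

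I do not expect a real obstacle here: the only ingredients are the observation that $u_0$ itself satisfies Helmholtz (so the solution enjoys interior $C^\infty$ regularity away from $\Gamma$), a standard local $H^2\hookrightarrow L^\infty$ estimate, and the trivial fact that a ball of fixed radius touches only boundedly many periodicity cells. The step that must be checked carefully is that $w_0(\alpha,\cdot)$ really is a solution of the Helmholtz equation for each $\alpha$; this is where the decomposition in Theorem \ref{th:reg_sca} is used, together with the linear independence (as functions of $\alpha$) of $1$, $\sqrt{\alpha-\alpha_0}$ and $\sqrt{\alpha+\alpha_0}$ (respectively $|\alpha-\alpha_0|$) which forces each coefficient to solve the equation separately.
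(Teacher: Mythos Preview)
Your proof is correct and follows exactly the approach the paper indicates: the paper's entire justification is the one-line remark ``with the interior regularity property for elliptic equations (see \cite{Evans1998})'' preceding the corollary, and you have simply written out the details of that argument. Your extra care in verifying that $u_0$ itself satisfies the Helmholtz equation (via the linear-independence argument for the coefficients in the decomposition of Theorem~\ref{th:reg_sca}) is a point the paper leaves implicit.
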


\subsection{Decay of $u_1$}
\label{sec:decay_1}

We consider the case that $u_1$ defined by \eqref{eq:def_u1}. Similar as in Section \ref{sec:decay_2}, let $x=x_0+(2\pi j,0)^\top$ for $j\in\Z$,
\[
u_1(x)=\int_{\alpha_0-\delta}^{\alpha_0+\delta}\sqrt{\alpha-\alpha_0}w(\alpha,x)e^{\i 2\pi\alpha j}\d\alpha
\]
where $\phi$ is compactly supported and smooth.

\begin{theorem}
\label{th:decay_1} For any $u_1$ defined by \eqref{eq:def_u1}, then there is a constant $C>0$ which depends on $x_0\in \Omega^{H+h}_0$ such that
\[
|u_1(x)|\leq C\|w(\cdot,x_0)\|_{C^2(\alpha_0-\delta,\alpha_0+\delta)}(1+|x_1|)^{-3/2}.
\]
And we also have 
\[
\|u_1\|_{H^1(\Omega_j^{H+h})}\leq C(1+|j|)^{-3/2}
\]
holds uniformly for any $j\in\Z$, and $C$ only depends on $m$.
\end{theorem}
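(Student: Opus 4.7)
The plan is to view $u_1(x_0 + (2\pi j, 0)^\top)$ as the $j$-th Fourier coefficient of an amplitude with a square-root (cusp) singularity at $\alpha = \alpha_0$, and then to extract the sharp $|j|^{-3/2}$ decay via the one-dimensional oscillatory-integral lemma expected to be developed in Section \ref{sec:fourier}. Concretely, for fixed $x_0 \in \Omega^{H+h}_0$ and $j \in \Z$ I treat
\[
u_1(x_0 + (2\pi j, 0)^\top) = \int_{\alpha_0-\delta}^{\alpha_0+\delta} \sqrt{\alpha - \alpha_0}\, w_1(\alpha, x_0)\, e^{\i 2\pi \alpha j}\, d\alpha
\]
as an oscillatory integral whose amplitude $w_1(\cdot, x_0) \in C^\infty_0(\alpha_0-\delta, \alpha_0+\delta)$ is smooth and compactly supported, while the weight $\sqrt{\alpha-\alpha_0}$ carries a non-smooth cusp at $\alpha_0$.

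The first and most substantial step is the scalar estimate: for every $g \in C^2_0(\alpha_0-\delta, \alpha_0+\delta)$,
\[
\left|\int_{\alpha_0-\delta}^{\alpha_0+\delta} \sqrt{\alpha - \alpha_0}\, g(\alpha)\, e^{\i 2\pi \alpha j}\, d\alpha\right| \leq C (1+|j|)^{-3/2}\, \|g\|_{C^2([\alpha_0-\delta, \alpha_0+\delta])}.
\]
I would prove it by splitting the interval at $\alpha_0$, writing the Taylor expansion $g(\alpha) = g(\alpha_0) + (\alpha-\alpha_0) g'(\alpha_0) + (\alpha-\alpha_0)^2 R(\alpha)$ with $\|R\|_\infty \leq C\|g\|_{C^2}$, and analysing each model integral $\int (\alpha-\alpha_0)^{n+1/2} e^{\i 2\pi \alpha j}\, d\alpha$ by the substitution $\alpha - \alpha_0 = \pm s^2/(2\pi|j|)$, which reduces it to a bounded incomplete Fresnel-type integral with prefactor $|j|^{-(n+3/2)}$. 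The leading $n=0$ term produces $|j|^{-3/2}$; the higher-order terms decay strictly faster; and the residual smooth piece carrying $R$ is absorbed by the same scaling together with a trivial $L^\infty$ bound.

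The second step upgrades this pointwise estimate to the $H^1$ bound on $\Omega_j^{H+h}$. By the $2\pi$-translation invariance built into \eqref{eq:def_u1},
\[
\|u_1\|_{H^1(\Omega_j^{H+h})} = \|u_1(\cdot + (2\pi j, 0)^\top)\|_{H^1(\Omega_0^{H+h})},
\]
and I would apply an $H^1_\alpha(\Omega_0^{H+h})$-valued version of the lemma to the vector-valued amplitude $F(\alpha) := \sqrt{\alpha-\alpha_0}\, w_1(\alpha, \cdot)$. Since $w_1 \in C^\infty_0((\alpha_0-\delta, \alpha_0+\delta); H^1_\alpha(\Omega^{H+h}_0))$, the $C^2$-norm of $F$ as an $H^1$-valued function is finite, and the same Taylor-plus-scaling argument delivers $\|u_1\|_{H^1(\Omega_j^{H+h})} \leq C (1+|j|)^{-3/2}$ with $C$ independent of $j$.

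The main obstacle is obtaining the sharp exponent $3/2$ in the scalar lemma. A single integration by parts against $e^{\i 2\pi \alpha j}$ only yields $|j|^{-1}$, because differentiating the amplitude produces the even worse singularity $(\alpha-\alpha_0)^{-1/2}$ and reapplying the idea becomes circular. The missing half-power has to come from a genuine scaling argument at $\alpha_0$ — equivalently, from the fact that the distributional Fourier transform of $(\alpha-\alpha_0)^{1/2}_+$ decays exactly like $|\xi|^{-3/2}$. Once this one-dimensional fact is in place, the Taylor reduction, the passage from pointwise to $H^1$, and the translation invariance are all essentially bookkeeping.
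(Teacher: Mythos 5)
There is a genuine gap at the heart of the scalar lemma you propose. The bound you assert for the model integral is false on a bounded interval: after the substitution $\alpha-\alpha_0 = s^2/(2\pi|j|)$, the $n=0$ model term becomes
\[
\int_{0}^{\delta}(\alpha-\alpha_0)^{1/2}e^{\i 2\pi\alpha j}\,\mathrm{d}\alpha \;=\; \frac{2\,e^{\i 2\pi\alpha_0 j}}{(2\pi|j|)^{3/2}}\int_0^{\sqrt{2\pi|j|\delta}} s^2 e^{\pm\i s^2}\,\mathrm{d}s,
\]
and $\int_0^M s^2 e^{\i s^2}\mathrm{d}s$ is \emph{not} bounded in $M$: integration by parts gives $\int_0^M s^2 e^{\i s^2}\mathrm{d}s = \frac{M}{2\i}e^{\i M^2} - \frac{1}{2\i}\int_0^M e^{\i s^2}\mathrm{d}s = O(M)$. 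Plugging $M=\sqrt{2\pi|j|\delta}$ yields $O(|j|^{-1})$, not $O(|j|^{-3/2})$. The slow decay comes from the hard cutoff at $\alpha_0+\delta$: a single integration by parts on $\int_0^\delta x^{1/2}e^{\i r x}\mathrm{d}x$ leaves the nonzero boundary value $\delta^{1/2}e^{\i r\delta}/(\i r)$. The $|j|^{-3/2}$ you want is the decay of the distributional Fourier transform of $x_+^{1/2}$ over all of $\R$; truncating at $\delta$ introduces an $O(|j|^{-1})$ error term, which is exactly the obstruction. In your Taylor decomposition $g(\alpha)=g(\alpha_0)+(\alpha-\alpha_0)g'(\alpha_0)+(\alpha-\alpha_0)^2R(\alpha)$, each of the three pieces is individually only $O(|j|^{-1})$ (and the $R$-piece is only $O(1)$ under a ``trivial $L^\infty$ bound''); the $|j|^{-3/2}$ rate emerges only because $g(\alpha_0+\delta)=0$ forces the boundary terms from the three pieces to cancel, and that cancellation is precisely the information your piecewise estimate discards.

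The paper's route (Corollary \ref{cr:decay_inta2} applied with $m=1$) avoids this entirely: it integrates by parts \emph{once} on the full integral $\int_0^\delta x^{1/2}\phi(x)e^{\i r x}\mathrm{d}x$, where the boundary term vanishes because $\phi(\delta)=0$ (and $x^{1/2}\to 0$ at the origin), thereby reducing to $\frac{1}{\i r}\int_0^\delta x^{-1/2}\psi(x)e^{\i r x}\mathrm{d}x$; it then applies the Fresnel argument of Theorem \ref{th:decay_inta1} to the $x^{-1/2}$-integral, for which the associated incomplete Fresnel integral $\int_0^M e^{\i s^2}\mathrm{d}s$ \emph{is} uniformly bounded. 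The order matters: first exploit the support condition to kill boundary terms, then Fresnel; your plan tries to Fresnel first, at the level of model integrals where the support condition has been stripped away. Your reduction to the $H^1$-valued setting and the exploitation of the translation structure of $\J^{-1}$ are fine as bookkeeping, but they inherit the gap in the scalar estimate. To repair the argument you should either integrate by parts once before splitting (which is what Corollary \ref{cr:decay_inta2} does), or carry out the Taylor split while explicitly tracking and cancelling the endpoint contributions.
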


\subsection{Decay of $u_2$}
\label{sec:decay_2}

In this section, we consider the decay of $u_{2}(x)$ when $|x_1|\rightarrow\infty$ when $w\in C^\infty_0(\alpha_0-\delta,\alpha_0+\delta)$.

\begin{theorem}
\label{th:decay22}
Suppose $\X$ be a smooth cutoff function which vanishes when $|t|>\delta$ and equals to $1$ when $|t|<\delta/2$; the function $\phi\in C^2(\R)$ with compact support. Then the function $u_2(x)$ defined by \eqref{eq:def_u2} satisfies
\[
|u_{2}(x_1)|\leq C\|w(\cdot,x_0)\|_{C^2(\alpha_0-\delta,\alpha_0+\delta)}|x_1|^{-2}
\]
where $C$ depends on $x_0$. And we also have 
\[
\|u_2\|_{H^1(\Omega_j^{H+h})}\leq C(1+|j|)^{-2}
\]
holds uniformly for any $j\in\Z$, and $C$ only depends on $m$.
\end{theorem}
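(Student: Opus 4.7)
The plan is to exploit the fact that the weight $|\alpha-\alpha_0|$, although not smooth, is piecewise $C^\infty$ on each of $[\alpha_0-\delta,\alpha_0]$ and $[\alpha_0,\alpha_0+\delta]$, so two integrations by parts become available after splitting the integration interval at the corner $\alpha_0$. This is in contrast to the weight $\sqrt{\alpha-\alpha_0}$ treated for Theorem \ref{th:decay_1}, where only a half derivative worth of regularity is available and one integration by parts yields only $|j|^{-3/2}$.

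Fix $x_0\in\Omega_0^{H+h}$ and write $x=x_0+(2\pi j,0)^\top$ with $j\in\Z\setminus\{0\}$. Split
\[
u_2(x)=\int_{\alpha_0-\delta}^{\alpha_0}(\alpha_0-\alpha)w_2(\alpha,x_0)e^{\i 2\pi\alpha j}\d\alpha+\int_{\alpha_0}^{\alpha_0+\delta}(\alpha-\alpha_0)w_2(\alpha,x_0)e^{\i 2\pi\alpha j}\d\alpha=:I_-(j)+I_+(j).
\]
On each half-interval the integrand is smooth in $\alpha$, so we apply integration by parts twice using $e^{\i 2\pi\alpha j}\d\alpha=(\i 2\pi j)^{-1}\d e^{\i 2\pi\alpha j}$. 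The first IBP has a vanishing boundary term: at $\alpha=\alpha_0$ the factor $\pm(\alpha-\alpha_0)$ vanishes, and at $\alpha=\alpha_0\pm\delta$ we use the compact support of $w_2(\cdot,x_0)$. After the second IBP the endpoint $\alpha_0\pm\delta$ again contributes nothing (both $w_2$ and $w_2'$ vanish there), while at $\alpha=\alpha_0$ the evaluation reduces to $\mp w_2(\alpha_0,x_0)e^{\i 2\pi\alpha_0 j}$ with an extra $\mp$ sign coming from the outer bracket. A careful bookkeeping shows that the two boundary contributions from $I_-$ and $I_+$ have the \emph{same} sign and add to
\[
\frac{2\,w_2(\alpha_0,x_0)\,e^{\i 2\pi\alpha_0 j}}{(\i 2\pi j)^2}=-\frac{w_2(\alpha_0,x_0)\,e^{\i 2\pi\alpha_0 j}}{2\pi^2 j^2},
\]
which is the leading-order term and reflects the corner singularity of $|\alpha-\alpha_0|$ (the factor does \emph{not} cancel, precisely because of the sign flip of the derivative of $|\alpha-\alpha_0|$ across $\alpha_0$). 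The remaining integrals after two IBPs involve first and second derivatives of $w_2(\cdot,x_0)$ multiplied by $(2\pi j)^{-2}$, hence are bounded by $C\,\|w_2(\cdot,x_0)\|_{C^2(\alpha_0-\delta,\alpha_0+\delta)}|j|^{-2}$. Since $|x_1|=|x_{0,1}+2\pi j|$ is comparable to $|j|$ for large $|j|$, we obtain the pointwise bound $|u_2(x)|\leq C\|w_2(\cdot,x_0)\|_{C^2}|x_1|^{-2}$.

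For the $H^1$-norm bound on $\Omega_j^{H+h}$, the same IBP identity is read in the Bochner sense with $w_2(\alpha,\cdot)\in H^1_\alpha(\Omega_0^{H+h})$. Translation by $(2\pi j,0)^\top$ is an isometry of $H^1$, so
\[
\|u_2\|_{H^1(\Omega_j^{H+h})}=\|u_2(\cdot+(2\pi j,0)^\top)\|_{H^1(\Omega_0^{H+h})}.
\]
The hypothesis $w_2\in C^\infty_0\left((\alpha_0-\delta,\alpha_0+\delta);H^1_\alpha(\Omega_0^{H+h})\right)$ provides uniform $H^1_\alpha$-bounds on $w_2,\partial_\alpha w_2,\partial_\alpha^2 w_2$, so repeating the two-step IBP with $H^1_\alpha$-valued integrands gives $\|u_2\|_{H^1(\Omega_j^{H+h})}\leq C(1+|j|)^{-2}$ with $C$ independent of $j$.

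The only genuinely delicate point is the second paragraph: one must track the signs in the two half-integrals carefully to see that the $\alpha=\alpha_0$ boundary terms \emph{add} rather than cancel (if they cancelled, one would improve to $|j|^{-3}$, which is false). Otherwise the argument is a parallel, slightly cleaner version of the IBP used for Theorem \ref{th:decay_1}, and no finer oscillatory-integral analysis (as was needed for the $\sqrt{\alpha-\alpha_0}$ weight) is required.
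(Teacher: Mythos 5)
Your proof is correct and follows essentially the same strategy as the paper: split the integral at the corner $\alpha_0$, and gain two factors of $j^{-1}$ via integration by parts on each half, with the boundary term at $\alpha_0$ producing the leading $j^{-2}$ contribution. The paper packages the two-step IBP into the auxiliary result on $\int_0^\delta x^m\phi(x)e^{\i r x}\d x$ (Theorem \ref{th:decay_inta3} with $m=1$) rather than carrying it out inline, but the substance is identical; your extra observation that the two $\alpha_0$-boundary contributions add rather than cancel is a useful sanity check, though not needed for the upper bound.
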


\begin{proof}
Directly from the definition \eqref{eq:def_u2},
\begin{align*}
u_{2}(x)&=\int_{\alpha_0-\delta}^{\alpha_0+\delta}|\alpha-\alpha_0|w(\alpha,x_0)e^{\i 2\pi j \alpha}\d\alpha\\
&=e^{\i 2\pi j\alpha_0 }\int_0^\delta  s w(s+\alpha_0,x_0)e^{\i 2\pi j s}\d s
+e^{\i 2\pi j \alpha_0}\int_0^\delta s w(-s+\alpha_0,x_0)e^{-\i 2\pi j s}\d s\\
&:=e^{\i 2\pi j\alpha_0 }(I)+e^{\i 2\pi j\alpha_0 }(II).
\end{align*}
From Theorem \ref{th:decay_inta3}, there is a constant $C>0$ such that
\[
|(I)|\leq C|x_1|^{-2}.
\]
The argument also holds for (II). The result is concluded in the following theorem.
\end{proof}

\section{Radiation condition for $v_0$}
\label{sec:rc_0}

In this section, we will prove  Part (B) in Theorem \ref{th:SRC} for $v_0$, which is defined by \eqref{eq:def_v}.  We  consider the radiation condition for the following layer potential (for simplicity, denote $u=v_0$ and $\phi=2 u_0\big|_{\Gamma^H}$ in the definition):
\begin{equation}
\label{eq:layer1}
u(x):=\int_{\Gamma^H}\frac{\partial\Phi(x,y)}{\partial y_2}\phi(y)\d s(y),\quad x_2\geq H+h,
\end{equation}
where $\phi$ is defined by \eqref{eq:def_u0}. From Corollary \ref{cr:decay_0} , $u_0\big|_{\Gamma^H}$ decays super-algebraically, i.e., for any given integer $m\geq 1$, there is a constant $C>0$ such that
\[
|\phi(y)|\leq C(1+|y|^2)^{-m}
\]
holds uniformly for all $y\in\Gamma^H$, where $C$ does not depend on $y$ but depends on the integer $m$. 

The aim of this section is to consider the asymptotic behaviour of
\begin{equation}
\label{eq:potential_old}
 u:=\int_{\Gamma_H}S(x,y)\phi(y)\d s(y),
\end{equation}
where $S(x,y)=\frac{\partial\Phi(x,y)}{\partial y_2}$, and
\begin{equation}
\label{eq:potential_new}
\frac{\partial u}{\partial r}-\i k u:=\int_{\Gamma_H}K(x,y)\phi(y)\d s(y),
\end{equation}
 where $K(x,y)$ is the kernel for the integral equation, when $r=|x|\rightarrow\infty$. 

\begin{remark}
The proof mainly follows the proofs in \cite{Hu2021,Kirsc2022}. The only difference is the decay rate of the density function $\phi$, which is super-algebraic in this paper, but the density function in these references decay at the rate of $(1+|y|^2)^{-3/4}$.
\end{remark}

First we need to study the asymptotic behaviours of the kernels $S$ and $K$. 
From the definition of the fundamental solution:
\[
\Phi(x,y)=\frac{\i}{4}H_0^{(1)}(k|x-y|),
\]
where $H_0^{(1)}(t)$ is the Hankel function of the first kind. With \eqref{eq:hankel_diff1},
\[
S(x,y)=\frac{\partial\Phi(x,y)}{\partial y_2}=\frac{\i k}{4}\frac{x_2-H}{|x-y|}H_1^{(1)}(k|x-y|)
\]
Then with \eqref{eq:hankel_diff2}, and for simplicity let $\tilde{x}:=x-(0,H)^\top$,
\begin{align*}
\frac{\partial^2\Phi(x,y)}{\partial r\partial y_2}&=\frac{\tilde{x}}{\left|\tilde{x}\right|}\cdot\nabla_x\left(\frac{\partial\Phi(x,y)}{\partial y_2}\right)\\&=\frac{\i k}{4}H_1^{(1)}(k|x-y|)\left[\frac{x_2-H}{\left|\tilde{x}\right||x-y|}-\frac{\left<\tilde{x},x-y\right>(x_2-H)}{\left|\tilde{x}\right||x-y|^3}\right]\\
&+\frac{\i k^2}{4}\frac{\left<\tilde{x},x-y\right>(x_2-H)}{\left|\tilde{x}\right||x-y|^2}\left[H_0^{(1)}(k|x-y|)-\frac{1}{k|x-y|}H_1^{(1)}(k|x-y|)\right]\\
&=\frac{\i k}{4}H_1^{(1)}(k|x-y|)\left[\frac{x_2-H}{\left|\tilde{x}\right||x-y|}-\frac{2\left<\tilde{x},x-y\right>(x_2-H)}{\left|\tilde{x}\right||x-y|^3}\right]\\
&+\frac{\i k^2}{4}\frac{\left<\tilde{x},x-y\right>(x_2-H)}{\left|\tilde{x}\right||x-y|^2}H_0^{(1)}(k|x-y|).
\end{align*}
Thus we have:
\begin{align*}
K(x,y)&:=\frac{\partial^2\Phi(x,y)}{\partial r\partial y_2}-\i k S(x,y)\\&=
\frac{\i k}{4}H_1^{(1)}(k|x-y|)\left[\frac{x_2-H}{\left|\tilde{x}\right||x-y|}-\frac{2\left<\tilde{x},x-y\right>(x_2-H)}{\left|\tilde{x}\right||x-y|^3}\right]\\
&+\frac{\i k^2}{4}\frac{\left<\tilde{x},x-y\right>(x_2-H)}{\left|\tilde{x}\right||x-y|^2}H_0^{(1)}(k|x-y|)+\frac{k^2}{4}\frac{x_2-H}{|x-y|}H_1^{(1)}(k|x-y|).
\end{align*}

We first use the asymptotic behaviour \eqref{eq:asym} for $n=0,1$ and $\ell=1$:
\begin{eqnarray}
\label{eq:hankel01}
H_0^{(1)}(k|x-y|)&=&\frac{\gamma}{\sqrt{|x-y|}}\exp\left(\i k |x-y|\right)+O\left(|x-y|^{-3/2}\right);\\
\label{eq:hankel11}
H_1^{(1)}(k|x-y|)&=&-\i\frac{\gamma}{\sqrt{|x-y|}}\exp\left(\i k |x-y|\right)+O\left(|x-y|^{-3/2}\right).
\end{eqnarray}
where $\gamma=\sqrt{\frac{2}{k\pi}}\exp\left(-\i\frac{\pi}{4}\right)$ is a constant. Thus 
\[
S(x,y),\,K(x,y)=O\left(|x-y|^{-1/2}\right)
\]
in general when $|x-y|\rightarrow\infty$.

For the kernel $K(x,y)$, with more detailed studies,
\begin{align*}
K(x,y)&=O\left(|x-y|^{-3/2}\right)+\frac{\i k^2\gamma}{4\sqrt{|x-y|}}\frac{x_2-H}{|x-y|}\left[\frac{\left<\tilde{x},x-y\right>}{\left|\tilde{x}\right||x-y|}-1\right]\exp(\i k |x-y|)\\
&=O\left(|x-y|^{-3/2}\right)+O\left(|x-y|^{-1/2}\right)\left[\frac{\left<\tilde{x},x-y\right>}{\left|\tilde{x}\right||x-y|}-1\right].
\end{align*}
Now we will focus on the term
\[
\frac{\left<\tilde{x},x-y\right>}{\left|\tilde{x}\right||x-y|}-1
\]
particularly. The property of this term is concluded in the following lemma.

\begin{lemma}
\label{lm:hankel}
Assume that $|x|>>H$, $y=(y_1,y_2)=(y_1,H)$ satisfying $|y_1|<\sqrt{|x|}$. Then 
\begin{equation}
\label{eq:lm:hankel}
0\leq 1-\frac{\left<\tilde{x},x-y\right>}{\left|\tilde{x}\right||x-y|}<\frac{4}{r}.
\end{equation}
\end{lemma}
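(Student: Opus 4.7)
The plan is to treat the quantity $1 - \langle\tilde{x},x-y\rangle/(|\tilde{x}||x-y|)$ as $1-\cos\theta$, where $\theta$ is the angle between the two vectors $\tilde{x}=(x_1,x_2-H)$ and $x-y=(x_1-y_1,x_2-H)$, and exploit the fact that these two vectors share the same second component $x_2-H$. This common component is what makes the angle small and produces the quantitative bound.

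The lower bound $0\leq 1-\langle\tilde{x},x-y\rangle/(|\tilde{x}||x-y|)$ is immediate from the Cauchy--Schwarz inequality, so the work is entirely in the upper bound. The key algebraic step is the Lagrange identity in two dimensions, which gives
\[
|\tilde{x}|^2|x-y|^2-\langle \tilde{x},x-y\rangle^2 = \bigl(x_1(x_2-H)-(x_2-H)(x_1-y_1)\bigr)^2 = (x_2-H)^2 y_1^2.
\]
Rationalising as
\[
|\tilde{x}||x-y|-\langle\tilde{x},x-y\rangle=\frac{(x_2-H)^2 y_1^2}{|\tilde{x}||x-y|+\langle\tilde{x},x-y\rangle},
\]
and then dividing by $|\tilde{x}||x-y|$, reduces the problem to estimating a rational expression in which the numerator is $(x_2-H)^2 y_1^2$ and the denominator grows like $r^4$.

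From there the strategy is to bound the numerator by $r^2 y_1^2$ (using $(x_2-H)^2\leq r^2$) and to bound each factor of the denominator from below. The hypothesis $|y_1|<\sqrt{|x|}$ together with $|x|\gg H$ implies $|y_1|^2$ is much smaller than $r$, so from $|x-y|^2 = r^2 - 2x_1 y_1 + y_1^2$ and $|x_1|\leq r$ one obtains $|x-y|\geq cr$ and $\langle\tilde{x},x-y\rangle = r^2 - x_1 y_1\geq c r^2$ for some explicit constant $c$ close to $1$ once $r$ is sufficiently large. Plugging these lower bounds into the denominator gives $|\tilde{x}||x-y|(|\tilde{x}||x-y|+\langle\tilde{x},x-y\rangle)\geq c' r^4$ with $c'$ close to $2$. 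Combining with $(x_2-H)^2 y_1^2 \leq r^2\cdot |x| \leq 2 r^3$ (using $|x|\leq 2r$ for $|x|\gg H$) produces a bound of the form $C/r$.

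The main (and only) obstacle is bookkeeping the universal constant and showing that the combined constant is no larger than $4$; this is a matter of keeping track of the $o(1)$ corrections in the lower bounds for $|x-y|$ and $\langle\tilde{x},x-y\rangle$ under the stated hypothesis $|x|\gg H$. No new ideas are needed beyond the Lagrange identity and the rationalisation trick, which turn what looks like a delicate geometric estimate into a straightforward computation.
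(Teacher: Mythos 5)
Your proposal is correct but follows a genuinely different route from the paper.  The paper writes $x-y=\tilde{x}-\tilde{y}$ with $\tilde{y}=(y_1,0)$, expands $|\tilde{x}-\tilde{y}|=|\tilde{x}|\sqrt{1+t}$ for a small parameter $t$, and invokes the concavity estimate $\sqrt{1+t}\le 1+t/2$ to arrive at $1-\frac{\langle\tilde{x},\tilde{x}-\tilde{y}\rangle}{|\tilde{x}||\tilde{x}-\tilde{y}|}\le\frac{|\tilde{y}|^2}{2|\tilde{x}||\tilde{x}-\tilde{y}|}$, from which the constant $4$ comes out directly with the crude bounds $|\tilde{y}|<2\sqrt{r}$ and $|\tilde{x}-\tilde{y}|>r/2$.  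You instead use the two-dimensional Lagrange identity $|\tilde{x}|^2|x-y|^2-\langle\tilde{x},x-y\rangle^2=(x_2-H)^2y_1^2$ plus rationalisation, which produces an \emph{exact identity} for $1-\cos\theta$ rather than a one-sided inequality, and then you bound numerator and denominator separately.  This is cleaner in one respect: it makes the geometric source of smallness (the two vectors share the same second coordinate, so their cross product is just $(x_2-H)y_1$) explicit, and with the sharper lower bounds $|x-y|,\langle\tilde{x},x-y\rangle/r\ge (1-o(1))r$ (rather than $r/2$), your estimate actually yields a constant close to $1$, comfortably below the stated $4$.  One small imprecision: you claim the hypotheses imply $|y_1|^2\ll r$, but in fact $|y_1|^2<|x|\approx r$, so $|y_1|^2$ is of order $r$; what you actually need (and what does hold) is $y_1^2\ll r^2$ and $|x_1y_1|\lesssim r^{3/2}\ll r^2$, so that $|x-y|^2$ and $\langle\tilde{x},x-y\rangle$ are both $(1-o(1))r^2$.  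With that correction the bookkeeping closes exactly as you sketch.
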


\begin{proof}Let $\tilde{y}:=y-(0,H)^\top$, then
\[
x-y=\tilde{x}-\tilde{y}\quad\Rightarrow\quad \frac{\left<\tilde{x},x-y\right>}{|\tilde{x}||x-y|}-1=
\frac{\left<\tilde{x},\tilde{x}-\tilde{y}\right>}{|\tilde{x}||\tilde{x}-\tilde{y}|}-1.
\]
When $|y_1|<\sqrt{|\tilde{x}|}$, then $|\tilde{y}|<2\sqrt{|\tilde{x}|}= 2\sqrt{r}$. In this case, since $|x|>>H$,
 \[
|\tilde{x}-\tilde{y}|\geq |\tilde{x}|-|\tilde{y}|> |\tilde{x}|-2\sqrt{|\tilde{x}|}\geq\frac{r}{2}.
\]
 First, note that
\[
|\tilde{x}-\tilde{y}|=\sqrt{|\tilde{x}|^2+|\tilde{y}|^2-2\left<\tilde{x},\tilde{y}\right>}=|\tilde{x}|\sqrt{1+\frac{|\tilde{y}|^2}{|\tilde{x}|^2}-\frac{2\left<\tilde{x},\tilde{y}\right>}{|\tilde{x}|^2}}:=|\tilde{x}|\sqrt{1+t},
\]
where
\[
t=\frac{|\tilde{y}|^2}{|\tilde{x}|^2}-\frac{2\left<\tilde{x},\tilde{y}\right>}{|\tilde{x}|^2}.
\]
Since $|\tilde{x}|>>1$,
\[
|t|\leq \frac{4|\tilde{x}|}{|\tilde{x}|^2}+\frac{2|\tilde{y}|}{|\tilde{x}|}<\frac{4}{|\tilde{x}|}+\frac{4}{\sqrt{|\tilde{x}|}}<1.
\]
Then
\[
|\tilde{x}||\tilde{x}-\tilde{y}|=|\tilde{x}|^2\sqrt{1+t}\leq |\tilde{x}|^2\left(1+\frac{1}{2}t\right)=|\tilde{x}|^2+\frac{|\tilde{y}|^2}{2}-\left<\tilde{x},\tilde{y}\right>,
\]
where we use the fact that
\[
\sqrt{1+t}\leq 1+\frac{1}{2}t,\quad |t|<1.
\]
Use this result,
\begin{align*}
1-\frac{\left<\tilde{x},\tilde{x}-\tilde{y}\right>}{\left|\tilde{x}\right||\tilde{x}-\tilde{y}|}&=\frac{\left|\tilde{x}\right||\tilde{x}-\tilde{y}|-|\tilde{x}|^2+\left<\tilde{x},\tilde{y}\right>}{\left|\tilde{x}\right||\tilde{x}-\tilde{y}|}\\
&\leq\frac{\left|\tilde{x}\right|^2+\frac{|\tilde{y}|^2}{2}-\left<\tilde{x},\tilde{y}\right>-|\tilde{x}|^2+\left<\tilde{x},\tilde{y}\right>}{|\tilde{x}||\tilde{x}-\tilde{y}|}\\
&=\frac{|\tilde{y}|^2}{2|\tilde{x}||\tilde{x}-\tilde{y}|}\leq\frac{4r}{2r(r/2)}=\frac{4}{r}.
\end{align*}

\end{proof}

The above lemma  implies that when $|y_1|<\sqrt{|x|}$, since $|x-y|\sim|x|$,
\[
K(x,y)=O\left(|x-y|^{-3/2}\right).
\]
Now we conclude that in general,
\begin{equation}
\label{eq:kernel_general}
S(x,y),\,K(x,y)=O\left(|x-y|^{-1/2}\right)
\end{equation}
and when $|y_1|<\sqrt{|x|}$,
\begin{equation}
\label{eq:kernel_far}
K(x,y)=O\left(|x-y|^{-3/2}\right).
\end{equation}
We get back to the layer potential defined by \eqref{eq:potential_old}-\eqref{eq:potential_new} and estimate the decay when $|x|\rightarrow\infty$.


\begin{theorem}
\label{th:decay_layer}
Let $u$ be defined as the layer potential \eqref{eq:potential_old} and $S$ satisfies the asymptotic behaviour  \eqref{eq:kernel_general}. Then when $|x|=r\rightarrow\infty$, there is a constant $C>0$ which does not depend on $x$ such that
\begin{equation}
\label{eq:decay_layer}
|u(x)|\leq Cr^{-1/2}
\end{equation}
holds uniformly for all $x_2\geq H+h$.
\end{theorem}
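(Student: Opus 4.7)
The plan is to split the integration domain $\Gamma^H$ into a near part $\{|y_1|\leq r/2\}$ and a far part $\{|y_1|>r/2\}$, and to estimate each piece separately using the kernel bound \eqref{eq:kernel_general} together with the super-algebraic decay of $\phi$ inherited from Corollary \ref{cr:decay_0}. The super-algebraic decay is much stronger than what is available in \cite{Hu2021,Kirsc2022}, so the argument will be considerably lighter than the one in those papers.

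For the near part, I introduce $\tilde x=x-(0,H)^\top$ and $\tilde y=y-(0,H)^\top=(y_1,0)^\top$, so that $|\tilde x|=r$ and $|\tilde y|=|y_1|$. The reverse triangle inequality then gives $|x-y|=|\tilde x-\tilde y|\geq r-|y_1|\geq r/2$, so \eqref{eq:kernel_general} yields $|S(x,y)|\leq Cr^{-1/2}$ uniformly in such $y$. Since $\phi\in L^1(\Gamma^H)$ (take $m=2$ in Corollary \ref{cr:decay_0}), this part is bounded by $Cr^{-1/2}\|\phi\|_{L^1(\Gamma^H)}$, which has precisely the required rate.

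For the far part, the constraint $x_2\geq H+h$ yields $|x-y|\geq x_2-H\geq h$, so $|S(x,y)|$ is uniformly bounded by $Ch^{-1/2}$. Invoking Corollary \ref{cr:decay_0} with $m=2$ produces
\[
\int_{|y_1|>r/2}(1+|y_1|)^{-2}\,dy_1=O(r^{-1}),
\]
so this contribution decays like $r^{-1}$, which is absorbed into $Cr^{-1/2}$ for $r$ large. Summing the two estimates establishes the bound for $r$ large; the remaining bounded range of $r$ is handled by the trivial fact that the layer potential is continuous and hence bounded on any compact set.

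I do not anticipate any serious obstacle here, since the super-algebraic decay of $\phi$ trivialises the far-field contribution and the near-field piece inherits the $r^{-1/2}$ rate directly from the kernel asymptotics. The only point worth checking is the uniformity of the constant in $x_2\in[H+h,\infty)$. This is automatic because the near-field bound depends only on $r=|\tilde x|$, and the far-field bound depends only on the fixed lower bound $x_2-H\geq h$; neither estimate uses any upper bound on $x_2$.
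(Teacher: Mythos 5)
Your proposal is correct and follows essentially the same strategy as the paper: split $\Gamma^H$ into a near region (where $|x-y|\gtrsim r$ gives the $r^{-1/2}$ kernel bound and the density is integrable) and a far region (where the density's fast decay dominates and the kernel is merely bounded). The only cosmetic difference is that you split at $|y_1|=r/2$ whereas the paper splits at $|y_1|=\sqrt{r}$; both choices work since the far-field tail of $\phi$ decays faster than any polynomial.
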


\begin{proof}
First split the integral \eqref{eq:potential_old} into two parts:
\begin{align*}
|u(x)|=\int_{|y_1|<\sqrt{r}}S(x,y)\phi(y)\d s(y)+\int_{|y_1|\geq\sqrt{r}}S(x,y)\phi(y)\d s(y):=(I)+(II).
\end{align*}
For term (I), since $|x-y|\geq |x|-|y|\geq\frac{r}{2}$, use the asymptotic behaviour of $S(x,y)$ and $\phi$,
\begin{align*}
|(I)|&\leq \int_{|y_1|<\sqrt{r}}|S(x,y)||\phi(y)|\d s(y)\\
&\leq Cr^{-1/2}\int_{-\sqrt{r}}^{\sqrt{r}}(1+|y_1|^2)^{-m}\d y_1\\
&\leq Cr^{-1/2}\int_{-\infty}^{\infty}(1+|y_1|^2)^{-m}\d y_1<Cr^{-1/2}.
\end{align*}
Here we use the fact that the later integral is bounded for $m\geq 1$. For term (II),
\begin{align*}
|(II)|\leq C\int_{|y_1|\geq\sqrt{r}}(1+|y_1|^2)^{-m}\d s(y)\leq C|x|^{-m+1/2}=Cr^{-m+1/2}.
\end{align*}
Take $m=1$, then $|(II)|\leq Cr^{-1/2}$. The proof is finished.
\end{proof}

We can prove the following theorem with similar technique.

\begin{theorem}
\label{th:rc_layer}
Let $u$ be defined as the layer potential \eqref{eq:potential_new} and $K$ satisfies \eqref{eq:kernel_general} and \eqref{eq:kernel_far}. Then when $|x|=r\rightarrow\infty$, there is a constant $C>0$ which does not depend on $x$ such that
\begin{equation}
\label{eq:rc_layer}
\left|\frac{\partial u}{\partial r}-\i k u\right|\leq Cr^{-3/2}
\end{equation}
holds uniformly for all $x_2\geq H+h$.
\end{theorem}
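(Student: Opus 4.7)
The plan is to follow the same dyadic splitting as in the proof of Theorem~\ref{th:decay_layer}, but to exploit the sharper kernel estimate \eqref{eq:kernel_far} together with the super-algebraic decay of $\phi=2u_0\big|_{\Gamma^H}$ furnished by Corollary~\ref{cr:decay_0}. Concretely, I write
\[
\frac{\partial u}{\partial r}-\i k u=\int_{|y_1|<\sqrt{r}}K(x,y)\phi(y)\,ds(y)+\int_{|y_1|\geq\sqrt{r}}K(x,y)\phi(y)\,ds(y):=(I)+(II),
\]
and estimate the two pieces separately, showing each is $O(r^{-3/2})$.

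For the inner piece $(I)$, the hypothesis $|y_1|<\sqrt{r}$ puts us precisely in the regime of Lemma~\ref{lm:hankel}, so \eqref{eq:kernel_far} applies. Combined with the geometric bound $|x-y|\geq |x|-|y|\geq r-\sqrt{r}-H\geq r/2$ for $r$ large (which uses $|x|=r+O(1)$ since $x_2\geq H+h$ forces $|x|\geq r$), this yields $|K(x,y)|\leq C r^{-3/2}$ uniformly on the inner region. Since $\phi$ decays super-algebraically, $\int_{|y_1|<\sqrt{r}}|\phi(y)|\,ds(y)\leq\int_{\R}(1+|y_1|^2)^{-m}\,dy_1<\infty$ for any $m\geq 1$, so $|(I)|\leq Cr^{-3/2}$.

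For the outer piece $(II)$, I cannot rely on kernel decay because $|x-y|$ need not be large there (it is only bounded below by $x_2-H\geq h$). Instead I observe that $K(x,y)$ is \emph{uniformly bounded} on $\{x_2\geq H+h,\,y_2=H\}$: the Hankel functions $H_0^{(1)}$ and $H_1^{(1)}$ are bounded on $[kh,\infty)$, and the remaining geometric factors in the explicit formula for $K$ are bounded ratios. With $|K(x,y)|\leq C$, the super-algebraic decay of $\phi$ gives
\[
|(II)|\leq C\int_{|y_1|\geq\sqrt{r}}(1+|y_1|^2)^{-m}\,dy_1\leq C\,r^{(1-2m)/2},
\]
and choosing $m\geq 2$ (permitted by Corollary~\ref{cr:decay_0}) forces $|(II)|\leq Cr^{-3/2}$. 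Adding the two bounds yields \eqref{eq:rc_layer}.

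I do not anticipate a serious obstacle: all the hard work is already packaged into Lemma~\ref{lm:hankel} (which produced the improved rate on $K$) and Corollary~\ref{cr:decay_0} (which gives us the freedom to take $m$ as large as we like). The only points that require some care are verifying that $K$ is genuinely bounded near the "diagonal" $y_1\approx x_1$ with $|y_1|$ large—this is where the condition $x_2\geq H+h>H$ is essential, keeping $|x-y|\geq h$—and checking that $|x-y|\geq r/2$ on the inner region for \emph{all} $x\in U_{H+h}$ rather than just those with small $|x_1|$, which follows from the elementary comparison $|x|\sim r$ noted above.
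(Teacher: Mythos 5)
Your proposal is correct and follows essentially the same approach as the paper: the same split at $|y_1|=\sqrt{r}$, the improved kernel estimate \eqref{eq:kernel_far} via Lemma~\ref{lm:hankel} on the inner region, uniform boundedness of $K$ on the outer region, and the super-algebraic decay of $\phi$ with $m\geq 2$. The only difference is that you spell out slightly more detail on why $K$ is uniformly bounded on the outer region, which the paper asserts without elaboration.
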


\begin{proof}

Similar as the proof of Theorem \ref{th:decay_layer}, we split the integral into two parts:
\[
\frac{\partial u}{\partial r}-\i k u=\int_{|y_1|<\sqrt{r}}K(x,y)\phi(y)\d s(y)+\int_{|y_1|\geq\sqrt{r}}K(x,y)\phi(y)\d s(y):=(I)+(II).
\]
For term (I), since  $|x-y|>\frac{r}{2}$, with \eqref{eq:kernel_far},
\begin{align*}
\left|(I)\right|&\leq C\int_{|y_1|\leq \sqrt{r}}|x-y|^{-3/2}(1+|y_1|^2)^{-m}\d s(y)\\
&\leq Cr^{-3/2}\int_{-\infty}^{\infty}(1+|y_1|^2)^{-m}\d s(y)
= Cr^{-3/2}.
\end{align*}

For term $(II)$, since $K$ is uniformly bounded,
\begin{align*}
|(II)|\leq C\int_{|y_1|>\sqrt{r}}(1+|y_1|^2)^{-m}\d s(y)=C r^{-m+1/2}.
\end{align*}
In particular, let $m=2$, then $|(II)|\leq Cr^{-3/2}$.
With the estimation of (I), the proof is then finished.
\end{proof}

\section{Radiation condition for $v_1$}
\label{sec:rc_1}
In this section,  we study the radiation condition for the function
\begin{equation}
\label{eq:def_vj}
v(x):=\int_{\alpha_0-\delta}^{\alpha_0+\delta}\sqrt{\alpha-\alpha_0}\phi(\alpha) e^{\i(\alpha+j)x_1+\i\sqrt{k^2-(\alpha+j)^2}(x_2-H)}\d\alpha.
\end{equation}
where we denote any smooth and compactly supported function $g_j$ by $\phi\in C_0^\infty(\alpha_0-\delta,\alpha_0+\delta)$. In the following sections, we will consider different $j$'s. 
\begin{itemize}
\item In Section \ref{sec:v1}, we consider Case I: $j\in J_-$, i.e., $|\alpha_0+j|< k$.
\item In Section \ref{sec:v2}, we consider Case II: $j\in J_+$, i.e., $|\alpha_0+j|> k$. 
\item In Section \ref{sec:v3}, we consider Case III: $j\in J_0$, i.e., $|\alpha_0+j|=k$. 
\end{itemize}
By choosing a proper small parameter $\delta>0$, we can have the further conditions:
\begin{itemize}
\item In Case I,  $|\alpha+j|<k-\delta$ for all $\alpha\in  [\alpha_0-\delta,\alpha_0+\delta]\subset\Lambda$.
\item In Case II, $|\alpha+j|>k+\delta$ for all $\alpha\in  [\alpha_0-\delta,\alpha_0+\delta]\subset\Lambda$.
\end{itemize}

In the following, we always assume that $x=r(\cos\theta_*,\sin\theta_*)+(0,H)$ (see Figure \ref{fig:src}), where $\theta_*\in (0,\pi)$. Moreover, $x_2\geq H+h$. Then $|x|\approx r$ when $r\rightarrow\infty$. We will consider the asymptotic behaviour of $v$ and $\frac{\partial v}{\partial r}-\i k v$ when $r\rightarrow\infty$ for all of the three cases.

\subsection{Case I: $|\alpha_0+j|< k$} 
\label{sec:v1}

As described in the beginning of this section, for any fixed $\alpha_0\in S$, $|\alpha_0+j|<k$ for all $j\in J_-(\alpha_0)$, which is a finite set. Define
\[
\alpha_0+j=k\cos\theta_0,\quad\alpha+j=k\cos\theta,\quad\sqrt{k^2-(\alpha+j)^2}=k\sin\theta
\]
then $\theta_0,\,\theta\in \left[\arccos\frac{\alpha_0+\delta+j}{k},\arccos\frac{\alpha_0-\delta+j}{k}\right]:=[\beta_1,\beta_2]\subset(0,\pi)$. Since for any $j\in J_-(\alpha_0)$, $|\alpha+j|<k-\delta$ for all $\alpha\in[\alpha_0-\delta,\alpha_0+\delta]$, we can have the following lower and upper boundaries of $\beta_1$ and $\beta_2$:
\[
\beta_1>\arccos(1-\delta/k)>\delta_0,\beta_2<\arccos(-1+\delta/k)<\pi-\delta_0.
\]
Since $0<\theta_*<\pi$, we also have the estimation
\[
\delta_0-\pi<\theta-\theta_*<\pi-\delta_0.
\]

 With the polar coordinate of $x$,
\begin{equation}
\label{eq:v_11_def}
v(x)=\int_{\beta_1}^{\beta_2}\sqrt{\theta-\theta_0}\,\psi(\theta)e^{\i k r \cos(\theta-\theta_*)}\d\theta,
\end{equation}
where $$\psi(\theta)=-k^{3/2}\sin\theta\,\phi(k\cos\theta-j)\sqrt{\frac{\cos\theta-\cos\theta_0}{\theta-\theta_0}}\in C_0^\infty(\beta_1,\beta_2).$$ The decay of $v$ with respect to $r$ is considered in the following theorem.  
\begin{theorem}
\label{th:v_11_decay}
Given any $\psi\in C_0^\infty(\beta_1,\beta_2)$ and let $v$ be defined by \eqref{eq:v_11_def}. There is a constant $C>0$ which only depens on $\psi$ such that
\begin{equation}
\label{eq:v_11_decay}
|v(x)|\leq Cr^{-1/2}
\end{equation}
holds uniformly for all $x=r(\cos\theta_*,\sin\theta_*)+(0,H)$ with $x_2\geq H+h$.
\end{theorem}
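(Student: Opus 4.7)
The phase $\phi(\theta):=k\cos(\theta-\theta_*)$ has derivative $\phi'(\theta)=-k\sin(\theta-\theta_*)$ and second derivative $\phi''(\theta)=-k\cos(\theta-\theta_*)$; a (possible) stationary point occurs precisely at $\theta=\theta_*$, where $\phi''=-k\neq 0$. The amplitude $\sqrt{\theta-\theta_0}\,\psi(\theta)$ is not $C^1$ at $\theta_0$, but interpreted with the principal branch it is of bounded variation on $[\beta_1,\beta_2]$: its modulus $\sqrt{|\theta-\theta_0|}$ is monotone on each of $[\beta_1,\theta_0]$ and $[\theta_0,\beta_2]$, so the product with the $C^\infty_0$ factor $\psi$ has total variation controlled by $\|\psi\|_\infty+\|\psi'\|_\infty$. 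This makes van der Corput's lemma, in its BV-amplitude form, the natural tool.

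The plan is to split the integration interval as $[\beta_1,\beta_2]=I_1\cup I_2$, where
\[
I_1:=[\beta_1,\beta_2]\cap[\theta_*-\pi/3,\theta_*+\pi/3],\qquad I_2:=[\beta_1,\beta_2]\setminus I_1,
\]
and handle each piece with the appropriate form of van der Corput. On $I_1$ we have $|\phi''(\theta)|=k|\cos(\theta-\theta_*)|\geq k/2$, so the second-derivative version yields
\[
\left|\int_{I_1}\sqrt{\theta-\theta_0}\,\psi(\theta)\,e^{\i kr\phi(\theta)/k}\,\d\theta\right|\leq C(kr)^{-1/2}\bigl(\|\sqrt{\theta-\theta_0}\,\psi\|_\infty+V_{I_1}(\sqrt{\theta-\theta_0}\,\psi)\bigr)=O(r^{-1/2}).
\]
On $I_2$ we have $|\theta-\theta_*|\geq\pi/3$, and combined with the a priori bound $\theta-\theta_*\in(\delta_0-\pi,\pi-\delta_0)$ derived in the text, one checks that $|\sin(\theta-\theta_*)|\geq c_0(\delta_0)>0$ uniformly. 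After further subdividing $I_2$ at $\theta-\theta_*=\pm\pi/2$ (if these points lie inside $I_2$) so that $\phi'$ is monotone on each resulting subinterval, the first-derivative van der Corput lemma gives an $O(r^{-1})$ bound on each piece, which is even stronger than required.

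The main delicate point I anticipate is the uniform BV control of $\sqrt{\theta-\theta_0}\,\psi$ near the singular point $\theta_0$, especially in the scenario when the stationary point $\theta_*$ coincides with $\theta_0$ (so that the amplitude is non-smooth exactly at the stationary point). This is handled by observing that the principal branch splits $\sqrt{\theta-\theta_0}$ into a real part which is monotone increasing on $[\theta_0,\beta_2]$ (and zero elsewhere) and an imaginary part $\i\sqrt{\theta_0-\theta}$ which is monotone decreasing on $[\beta_1,\theta_0]$ (and zero elsewhere), each with total variation at most $\sqrt{\beta_2-\beta_1}$. Multiplying by $\psi\in C_0^\infty$ then yields $V(\sqrt{\theta-\theta_0}\,\psi)\leq C(\|\psi\|_\infty+\|\psi'\|_\infty)$ with $C$ independent of $\theta_*$. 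Adding the $I_1$ and $I_2$ contributions gives the uniform bound $|v(x)|\leq Cr^{-1/2}$, with $C$ depending only on $\psi$, which is precisely \eqref{eq:v_11_decay}.
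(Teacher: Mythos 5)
Your proposal is correct, and it takes a genuinely different route from the paper. The paper's proof performs the substitution $t=\sin((\theta-\theta_*)/2)$ to convert the oscillatory phase $kr\cos(\theta-\theta_*)$ into the quadratic phase $-2kr t^2$, and then reduces the result to the model integral $I_2^C(r)=\int\sqrt{t-a}\,\zeta(t)e^{\i rt^2}\d t$ analyzed in the appendix (Corollary \ref{cr:decay_intc2}), which is itself proved by a hands-on decomposition $\sqrt{t-a}=\frac{t}{\sqrt{t-a}}-\frac{a}{\sqrt{t-a}}$, explicit integration by parts around the singularities at $0$ and $a$, and Fresnel-integral estimates. You instead apply van der Corput's lemma with a bounded-variation amplitude directly to the integral in $\theta$, with no change of variables: you split at $|\theta-\theta_*|=\pi/3$ to separate the region where $|\Phi''|\gtrsim kr$ (second-derivative van der Corput, giving $O(r^{-1/2})$) from the region where $|\Phi'|\gtrsim r$ (first-derivative van der Corput after ensuring monotonicity of $\Phi'$, giving $O(r^{-1})$), and the key observation is that the only non-smooth factor $\sqrt{\theta-\theta_0}$ is BV uniformly in $\theta_*$, since its real and imaginary parts are each monotone with variation bounded by $\sqrt{\beta_2-\beta_1}$. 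This buys you cleanliness: the amplitude $\sqrt{\theta-\theta_0}\,\psi(\theta)$ does not depend on $\theta_*$ at all, so the uniformity of the constant is immediate, whereas the paper's change of variables produces an amplitude $\zeta(t)$ whose $C^2$ norm one must (implicitly) check is uniformly bounded in $\theta_*$. Your approach also sidesteps the paper's case distinctions (e.g.\ whether $0\le a$) and the entire appendix machinery for Class C integrals. What the paper's approach buys, by contrast, is more explicit constants and a template (the appendix catalogue of integrals) that is reused verbatim for the later $|\alpha-\alpha_0|$ and $\mathrm{sgn}$ densities; to extend your argument to Theorems \ref{th:v1_2_decay}, \ref{th:rc_v111}, etc., one would need to re-verify the BV property and, for the $\frac{\partial v}{\partial r}-\i k v$ estimates, replace the crude BV van der Corput by a stationary-phase argument that gains an extra factor of $r^{-1}$, which is less automatic. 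The one point you flag as delicate — the coincidence $\theta_*=\theta_0$ — is indeed handled correctly by the BV argument, since van der Corput's lemma makes no smoothness assumption on the amplitude at the stationary point.
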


\begin{proof}
We use the formula
\[
\cos(\theta-\theta_*)=1-2\sin^2\left(\frac{\theta-\theta_*}{2}\right).
\]
Since $\theta-\theta_*\in(\delta_0-\pi,\pi-\delta_0)$, the map $\theta\mapsto \sin\left(\frac{\theta-\theta_*}{2}\right)$ is an injection and anaytic. Let $t:=\sin\left(\frac{\theta-\theta_*}{2}\right)\in[\gamma_1,\gamma_2]\subset(-1,1)$, then
\[
\theta=2\arcsin t+\theta_*,\quad\d\theta=\frac{2}{\sqrt{1-t^2}}\d t.
\]
Replace $\theta$ by $t$ in \eqref{eq:v_11_def} we get
\[
v(x)=e^{\i k r}\int_{\gamma_1}^{\gamma_2}\sqrt{t-a}\zeta(t) e^{-2\i k r t^2}\d t,
\]
where $a=\sin((\theta_0-\theta_*)/2)$
\[
\zeta(t)=\psi\left(2\arcsin t+\theta_*\right)\frac{2}{\sqrt{1-t^2}}\sqrt{\frac{2\arcsin t-2\arcsin a}{t-a}}\in C_0^\infty(\gamma_1,\gamma_2).
\]
Thus $v$ has exactly the same form as the integral $I_2^C(r)$ defined by \eqref{eq:intc2}. From Corollary \eqref{eq:decay_intc2}, there is a constant $C>0$ such that
\[
|v(x)|\leq Cr^{-1/2}
\]
holds uniformly for all $\theta_*\in(0,pi)$ when $r\rightarrow\infty$. 
The proof is finished.
\end{proof}

Now we move on to the radiation condition. From direct computation,
\begin{equation}
\label{eq:v_11_rc}
\left(\frac{\partial v}{\partial r}-\i k v\right)=\i k \int_{\beta_1}^{\beta_2}\sqrt{\theta-\theta_0}\left(\cos(\theta-\theta_*)-1\right){\psi}(\theta)e^{\i k r\cos(\theta-\theta_*)}\d\theta,
\end{equation}
We need to consider two different situations, i.e.,  $\theta_*\notin[\beta_1,\beta_2]$ and $\theta_*\in[\beta_1,\beta_2]$, separately.
The radiation condtion when $\theta_*\notin[\beta_1,\beta_2]$ is concluded in the following theorem.

\begin{theorem}
\label{th:rc_v111}
For any $\psi\in C_0^\infty(\beta_1,\beta_2)$, and $\theta_*\notin[\beta_1,\beta_2]$,   there is a constant $C>0$ only depends on $\psi$ such that
\begin{equation}
\label{eq:rc_v111}
\left|\frac{\partial v}{\partial r}-\i k v\right|\leq Cr^{-3/2}.
\end{equation} 
\end{theorem}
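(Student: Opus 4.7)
The plan is to use one integration by parts in $\theta$ to extract a prefactor $r^{-1}$, and then bound the remaining oscillatory integral by $Cr^{-1/2}$ using the same substitution and Section \ref{sec:fourier} lemmas that underlie Theorem \ref{th:v_11_decay}. The starting identity is
\[
\bigl(\cos(\theta-\theta_*)-1\bigr)e^{\i k r\cos(\theta-\theta_*)} = \frac{1}{\i k r}\tan\!\left(\frac{\theta-\theta_*}{2}\right)\frac{d}{d\theta}e^{\i k r\cos(\theta-\theta_*)},
\]
which follows by combining $\cos(\theta-\theta_*)-1=-\tan((\theta-\theta_*)/2)\sin(\theta-\theta_*)$ with $\frac{d}{d\theta}e^{\i k r\cos(\theta-\theta_*)}=-\i k r\sin(\theta-\theta_*)e^{\i k r\cos(\theta-\theta_*)}$. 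Because $\theta_*\in(0,\pi)\setminus[\beta_1,\beta_2]$ and $[\beta_1,\beta_2]\subset(\delta_0,\pi-\delta_0)$, the argument $(\theta-\theta_*)/2$ stays in a compact subinterval of $(-\pi/2,\pi/2)$ uniformly in $\theta_*$, so $\tan((\theta-\theta_*)/2)$ and all of its derivatives are bounded on $[\beta_1,\beta_2]$ uniformly in $\theta_*$.

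Inserting this identity into \eqref{eq:v_11_rc} and integrating by parts (boundary contributions vanish because $\psi\in C_0^\infty(\beta_1,\beta_2)$) gives
\[
\frac{\partial v}{\partial r}-\i k v = -\frac{1}{r}\int_{\beta_1}^{\beta_2}\frac{d}{d\theta}\!\left[\sqrt{\theta-\theta_0}\,\psi(\theta)\tan\!\left(\tfrac{\theta-\theta_*}{2}\right)\right]\!e^{\i k r\cos(\theta-\theta_*)}\d\theta.
\]
The derivative inside the integrand decomposes into a regular piece of the form $\sqrt{\theta-\theta_0}\,\psi_1(\theta)$, with $\psi_1\in C_0^\infty(\beta_1,\beta_2)$, and a weakly singular piece $\psi_2(\theta)/\sqrt{\theta-\theta_0}$, with $\psi_2\in C_0^\infty(\beta_1,\beta_2)$. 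The regular term is exactly of the form treated in Theorem \ref{th:v_11_decay}: the substitution $t=\sin((\theta-\theta_*)/2)$ converts it into the integral $I_2^C$ of Section \ref{sec:fourier}, whose corresponding corollary gives an $O(r^{-1/2})$ estimate. For the weakly singular piece, the same substitution produces an integral of the form $\int(t-a)^{-1/2}\tilde{\zeta}(t)e^{-2\i k r t^2}\d t$; a further change of variables $s=\sqrt{t-a}$ (split around $t=a$ if $a\in(\gamma_1,\gamma_2)$) removes the integrable singularity and leaves a smooth, compactly supported amplitude against the phase $-2kr(s^2+a)^2$ whose only critical point $s=0$ is excluded, because $\theta_*\notin[\beta_1,\beta_2]$ forces $0\notin[\gamma_1,\gamma_2]$. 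Standard non-stationary phase then gives an $O(r^{-1/2})$ bound on this piece as well, and combined with the $1/r$ prefactor we obtain the desired $O(r^{-3/2})$ rate.

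The main obstacle I anticipate is the singular $(\theta-\theta_0)^{-1/2}$ integrand, which is not directly covered by the $I_2^C$ lemma; the change-of-variables desingularization above, however, reduces it to a routine non-stationary phase estimate. A secondary point is the uniformity of the final constant $C$ in $\theta_*$, which is guaranteed by the uniform bounds on $\tan((\theta-\theta_*)/2)$ and its derivatives, together with a uniform lower bound on $|t|=|\sin((\theta-\theta_*)/2)|$ on the support of the transformed amplitude. All of these estimates are controlled by the geometric margin $\delta_0$ built into the choice of $\beta_1,\beta_2$ and by the restriction $\theta_*\in(0,\pi)$, so $C$ depends only on $\psi$ (via $\|\psi\|_{C^2}$, say) and on the structural constants, not on $\theta_*$ itself.
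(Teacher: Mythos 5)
Your proof takes a genuinely different route from the paper, but it contains a real gap in the treatment of the weakly singular piece.

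\paragraph{Comparison of approaches.} The paper's proof is simpler and exploits the hypothesis $\theta_*\notin[\beta_1,\beta_2]$ at the crucial step: after one integration by parts it writes the remaining integrand as $\frac{1}{\sqrt{\theta-\theta_0}}\zeta(\theta)$ with a single $C_0^\infty$ amplitude $\zeta$ (no regular/singular split), and then observes that because $\theta_*$ lies outside $[\beta_1,\beta_2]$ the map $\theta\mapsto\cos(\theta-\theta_*)$ is \emph{monotone} on $[\beta_1,\beta_2]$. Substituting $t=\cos(\theta-\theta_*)$ turns the oscillatory factor into a \emph{linear} phase $e^{\i r t}$, and the integral is of the type $I_1^A(r)$ from Section \ref{sec:intg}, which decays as $O(r^{-1/2})$. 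You instead keep the quadratic-phase substitution $t=\sin((\theta-\theta_*)/2)$ (the one designed for the stationary-point case of Theorem \ref{th:rc_v112}), decompose the derivative into a regular and a singular piece, and treat the two separately. Your regular piece is handled correctly via $I_2^C$, but the singular piece is where the trouble lies.

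\paragraph{The gap.} For the singular piece you arrive at $\int(t-a)^{-1/2}\tilde\zeta(t)e^{-2\i k r t^2}\d t$, substitute $s=\sqrt{t-a}$, and claim the resulting phase $-2kr(s^2+a)^2$ is non-stationary on the domain of integration because ``$\theta_*\notin[\beta_1,\beta_2]$ forces $0\notin[\gamma_1,\gamma_2]$.'' This confuses two different zero sets. The exclusion $0\notin[\gamma_1,\gamma_2]$ is in the $t$-variable and corresponds to excluding $\theta=\theta_*$; but the phase's critical point $s=0$ corresponds to $t=a$, that is, $\theta=\theta_0$, which lies squarely in the interior of $[\beta_1,\beta_2]$ and is precisely the point where your integrand was singular. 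After splitting at $t=a$, $s=0$ is an endpoint of each piece, not excluded at all. Since the amplitude $2\tilde\zeta(s^2+a)$ does not vanish there in general, a bare non-stationary-phase argument does not apply, and the bound you assert is unjustified as written. The estimate can be salvaged: because $\theta_0$ is a fixed interior point of $(\beta_1,\beta_2)$ and $\theta_*$ is outside the interval, $|a|=|\sin((\theta_0-\theta_*)/2)|$ is bounded below independently of $\theta_*$, so the endpoint critical point $s=0$ is non-degenerate with $\phi''(0)=4a$ bounded away from zero, and an endpoint stationary-phase (or an appeal to $I_1^C(r)$ with the explicit $|a|^{-1}r^{-1/2}$ bound from Theorem \ref{th:intc1} plus the lower bound on $|a|$) does give $O(r^{-1/2})$. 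You need to supply one of these arguments in place of the incorrect ``critical point excluded'' claim — or, more economically, avoid the whole issue by switching to the paper's linear-phase substitution, for which the monotonicity granted by $\theta_*\notin[\beta_1,\beta_2]$ makes the estimate immediate.
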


\begin{proof}

Since $\theta_*\in(0,\pi)$ and $\theta\in[\beta_1,\beta_2]\subset(\delta_0,\pi-\delta_0)$, $\frac{\theta-\theta_*}{2}\neq \frac{\pi}{2}$. From direct computation, 
\[
-\frac{\cos(\theta-\theta_*)-1}{\sin(\theta-\theta_*)}=\frac{\sin^2\left(\frac{\theta-\theta_*}{2}\right)}{\sin\left(\frac{\theta-\theta_*}{2}\right)\cos\left(\frac{\theta-\theta_*}{2}\right)}=\tan\left(\frac{\theta-\theta_*}{2}\right).
\]
This function is also extended continuously for $\theta=\theta_*$. Thus with integration by parts,
\begin{align*}
\frac{\partial v}{\partial r}-\i k v&=\frac{1}{ r}\int_{\beta_1}^{\beta_2}\sqrt{\theta-\theta_0}\tan\left(\frac{\theta-\theta_*}{2}\right){\psi}(\theta)\d e^{\i k r\cos(\theta-\theta_*)}\\
&=-\frac{1}{r}\int_{\beta_1}^{\beta_2}\left[\sqrt{\theta-\theta_0}\tan\left(\frac{\theta-\theta_*}{2}\right){\psi}(\theta)\right]' e^{\i k r\cos(\theta-\theta_*)}\d\theta\\
&=\frac{1}{r}\int_{\beta_1}^{\beta_2}\frac{1}{\sqrt{\theta-\theta_0}}\zeta(\theta)e^{\i k r \cos(\theta-\theta_*)}\d\theta:=\frac{1}{\i k r}(I),
\end{align*}
where
\[
\zeta(\theta)=-\frac{1}{2}\tan\left(\frac{\theta-\theta_*}{2}\right)\psi(\theta)-\frac{\theta-\theta_0}{\cos(\theta-\theta_*)+1}\psi(\theta)-(\theta-\theta_0)\tan\left(\frac{\theta-\theta_*}{2}\right)\psi'(\theta)\in C_0^\infty(\beta_1,\beta_2).
\]

Since $\theta_*\notin[\beta_1,\beta_2]$, then $\theta\mapsto \cos(\theta-\theta_*)$ is an injection. Let $t:=\cos(\theta-\theta_*)$, then it lies in an interval $[\gamma_1,\gamma_2]\subset (-1,1)$ and $\theta=\arccos(t)+\theta_*$. Thus let $a:=\cos(\theta-\theta_*)$
\[
I(r)=\int_{\gamma_1}^{\gamma_2}\frac{1}{\sqrt{t-a}}\rho(t)e^{\i r t}\d t,
\]
where
\[
\rho(t)=-\frac{1}{\sqrt{1-t^2}}\sqrt{\frac{t-a}{\arccos(t)-\arccos(a)}}\zeta(\arccos(t)+\theta_*)\in C_0^\infty(\gamma_1,\gamma_2).
\]
Since
\[
I(r)=e^{\i r a}\int_{\gamma_1-a}^{\gamma_2-a}\frac{1}{\sqrt{t}}\rho(t+a)e^{\i r t}\d t,
\]
which has exactly the same as $I_1(r)$ defined in \eqref{eq:inta1}. From Theorem \ref{th:decay_inta1}, $|I_1(r)|\leq Cr^{-1/2}$.
From the previous computation, we get that
\[
\left|\frac{\partial v}{\partial r}-\i k v\right|\leq Cr^{-3/2}
\]
holds uniformly for all $\theta_*\in (0,\pi)\setminus[\beta_1,\beta_2]$ when $r\rightarrow\infty$.
\end{proof}

In the next theorem, we will focus on the case that $\theta_*\in[\beta_1,\beta_2]$.
\begin{theorem}
\label{th:rc_v112}
For any $\psi\in C_0^\infty(\beta_1,\beta_2)$, and $\theta_*\in[\beta_1,\beta_2]$,   there is a constant $C>0$ only depends on $\psi$ such that
\begin{equation}
\label{eq:rc_v112}
\left|\frac{\partial v}{\partial r}-\i k v\right|\leq Cr^{-3/2}.
\end{equation} 
\end{theorem}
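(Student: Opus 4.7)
The plan is to adapt the integration-by-parts step from the proof of Theorem \ref{th:rc_v111}, but to replace the substitution $t = \cos(\theta-\theta_*)$ (which fails to be injective when $\theta_* \in [\beta_1,\beta_2]$) by the substitution $t = \sin((\theta-\theta_*)/2)$ already used in Theorem \ref{th:v_11_decay}. Since $\theta-\theta_* \in (\delta_0-\pi,\pi-\delta_0)$, this substitution is a smooth diffeomorphism of $[\beta_1,\beta_2]$ onto some $[\gamma_1,\gamma_2]\subset(-1,1)$, and it converts the phase into the Gaussian form $e^{\i kr}e^{-2\i k r t^2}$ with a single stationary point at $t=0$. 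The crucial observation underlying the improved $r^{-3/2}$ rate is the identity $\cos(\theta-\theta_*)-1 = -2\sin^2((\theta-\theta_*)/2) = -2t^2$: the amplitude factor in \eqref{eq:v_11_rc} vanishes to \emph{second order} precisely at the stationary point of the transformed phase.

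After the substitution, writing $a = \sin((\theta_0-\theta_*)/2) \in [\gamma_1,\gamma_2]$ and absorbing the smooth positive Jacobian factors into $\rho \in C_0^\infty(\gamma_1,\gamma_2)$, the expression \eqref{eq:v_11_rc} becomes
\begin{equation*}
\frac{\partial v}{\partial r} - \i k v \;=\; c_0\, e^{\i k r}\int_{\gamma_1}^{\gamma_2} \sqrt{t-a}\,t^2\,\rho(t)\, e^{-2\i k r t^2}\d t,
\end{equation*}
for a harmless constant $c_0$. Setting $\lambda = 2kr$ and using $t\,e^{-\i\lambda t^2} = -(2\i\lambda)^{-1}\frac{d}{dt}(e^{-\i\lambda t^2})$, one integration by parts (boundary terms vanish by the compact support of $\rho$) yields
\begin{equation*}
\int_{\gamma_1}^{\gamma_2} \sqrt{t-a}\,t^2\,\rho(t)\,e^{-\i\lambda t^2}\d t \;=\; \frac{1}{2\i\lambda}\int_{\gamma_1}^{\gamma_2} \frac{d}{dt}\bigl[\sqrt{t-a}\,t\,\rho(t)\bigr]\, e^{-\i\lambda t^2}\d t.
\end{equation*}
The algebraic identity $t/\sqrt{t-a} = \sqrt{t-a} + a/\sqrt{t-a}$ splits the differentiated integrand into two groups: terms of the type $\sqrt{t-a}\cdot(\text{smooth compactly supported})$, each of which is $O(\lambda^{-1/2})$ by the same $I_2^C$-style estimate invoked in Theorem \ref{th:v_11_decay}, together with a single residual term proportional to $a\int\rho(t)(t-a)^{-1/2}e^{-\i\lambda t^2}\d t$. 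Combined with the $1/\lambda$ prefactor, a uniform $O(\lambda^{-1/2})$ bound on the residual closes the proof.

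The main obstacle is precisely this uniform-in-$a$ estimate. When $\theta_* = \theta_0$ (so $a=0$), the branch point of $(t-a)^{-1/2}$ coincides with the stationary point of the phase, and a rescaling argument shows the bare integral $\int\rho(t)(t-a)^{-1/2}e^{-\i\lambda t^2}\d t$ is only $O(\lambda^{-1/4})$, not $O(\lambda^{-1/2})$; this is why the naive continuation of the strategy from Theorem \ref{th:rc_v111} breaks down. The remedy is that the external factor of $a$ compensates exactly for the deficit: setting $s = \sqrt{\lambda}\,t$ and $b = \sqrt{\lambda}\,a$ rescales the residual to $b\,\lambda^{-3/4}\int \rho(s/\sqrt{\lambda})(s-b)^{-1/2}e^{-\i s^2}\d s$, and a standard stationary phase / van der Corput estimate bounds the rescaled integral by $C\min(1,|b|^{-1/2})$. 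Multiplying by $|b|$ gives $|b|\min(1,|b|^{-1/2}) \leq |b|^{1/2}$, and since $|b| \leq \delta\sqrt{\lambda}$ one obtains $O(\lambda^{-3/4}\cdot |b|^{1/2}) = O(\lambda^{-1/2})$ uniformly in $a$. Assembling the pieces yields $|\partial_r v - \i k v| \leq Cr^{-3/2}$, which is \eqref{eq:rc_v112}.
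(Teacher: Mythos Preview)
Your approach is essentially identical to the paper's: the same substitution $t=\sin((\theta-\theta_*)/2)$, the same integration by parts against $d(e^{-2\i kr t^2})$, and the same algebraic split $t/\sqrt{t-a}=\sqrt{t-a}+a/\sqrt{t-a}$, yielding an $I_2^C$-type term plus the residual $a\int(t-a)^{-1/2}\rho\,e^{-2\i kr t^2}\d t$. The only divergence is in how you bound this residual: the paper invokes its prepared lemma on $I_1^C$ (Theorem~\ref{th:intc1}), whose statement is exactly $|a\,I_1^C(r)|\le Cr^{-1/2}$ and whose proof proceeds by a careful two-regime ($a\gtrless 2r^{-1/2}$) domain decomposition, whereas you rescale to $s=\sqrt{\lambda}t$, $b=\sqrt{\lambda}a$ and assert $\big|\int \rho(s/\sqrt{\lambda})(s-b)^{-1/2}e^{-\i s^2}\d s\big|\le C\min(1,|b|^{-1/2})$. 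That inequality is precisely the content of Theorem~\ref{th:intc1} in rescaled form, so your argument is correct but the phrase ``a standard stationary phase / van der Corput estimate'' hides all of the actual work---van der Corput does not directly apply to amplitudes with a branch singularity, and the uniformity in both $b$ and the growing integration domain requires exactly the kind of region-by-region analysis the paper carries out.
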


\begin{proof}
Note that in this case, $\theta\mapsto\cos(\theta-\theta_*)$ is no longer and injection so the previous method fails. Thus we need to look for a different as in the proof of Theorem \ref{th:rc_v111}.

Let $t:=\sin((\theta-\theta_*)/2)$ and $a:=\sin((\theta_0-\theta_*)/2)$. From the proof of Theorem \ref{th:v_11_decay}, the change of variable is well defined. Then
\begin{align*}
\frac{\partial v}{\partial r}-\i k v&=\int_{\gamma_1}^{\gamma_2}{\sqrt{t-a}}t^2\zeta(t)e^{-2\i k r t^2}\d t:=(I),
\end{align*}
where
\[
\zeta(t)=-\i k^{5/2}\frac{4}{\sqrt{1-t^2}}\sqrt{\frac{2\arcsin t-2\arcsin a}{t-a}}\psi\left(\arcsin(2t)+\theta_*\right)\in C_0^\infty(\gamma_1,\gamma_2)
\]
 Although it has exactly the same form as $I_2^C(r)$ as defined in \eqref{eq:intc2}, we still need a further study since the decay rate is too low compared to our expectation.  With integration by parts,
 \begin{align*}
(I)&=-\frac{1}{4\i k r}\int_{\gamma_1}^{\gamma_2}t\sqrt{t-a}\zeta(t)\d e^{-2\i k rt^2} \\
&:=\frac{a}{\i k r}\int_{\gamma_1}^{\gamma_2}\frac{1}{\sqrt{t-a}}\zeta_1(t)e^{-2\i k r t^2}\d t+\frac{1}{\i k r}\int_{\gamma_1}^{\gamma_2}{\sqrt{t-a}}\zeta_2(t)e^{-2\i k r t^2}\d t\\
&:=\frac{a}{\i k r}(i)+\frac{1}{\i k r}(ii)
\end{align*}  
where
\[
\zeta_1(t)=\frac{1}{8}\zeta(t),\quad
\zeta_2(t)=\frac{3}{8}\zeta(t)+\frac{1}{4}t\zeta'(t)\in C_0^\infty(\gamma_1,\gamma_2).
\]
 Note that we have used the fact that
 \[
\frac{t}{\sqrt{t-a}}=\frac{t-a}{\sqrt{t-a}}+\frac{a}{\sqrt{t-a}}=\sqrt{t-a}+\frac{a}{\sqrt{t-a}}. 
 \]
Oberserve that (i) has exactly the form of $I_1^C(r)$ defined in \eqref{eq:intc1} and (ii) has the form of $I_2^C(r)$ defined in \eqref{eq:intc2}. From Theorem \ref{th:intc1} and Corollary \ref{cr:decay_intc2}, there is a constant $C>0$ such that
\[
|a(i)|,|(ii)|\leq Cr^{-1/2}
\]
holds uniformly for  $|a|<1$. Thus we finally conclude that
\[
\left(\frac{\partial v}{\partial r}-\i k v\right)=O\left(r^{-3/2}\right).
\]
\end{proof}

As a conclusion, when $|\alpha_0+j|<k$ where $\alpha_0\in\S$ and $j\in J_-(\alpha_0)$, 
\[
v(x)=O\left(r^{-1/2}\right),\quad\frac{\partial v}{\partial r}-\i k v=O\left(r^{-3/2}\right).
\]
We will focus on the other two cases in the following subsections.

\subsection{Case II: $|\alpha_0+j|>k$}
\label{sec:v2}

In this case, since $|\alpha+j|>k$, $\i\sqrt{k^2-(\alpha+j)^2}=-\sqrt{(\alpha+j)^2-k^2}$ and there is a $\delta_1>0$ such that
\[
\sqrt{(\alpha+j)^2-k^2}>2\delta_1
\]
holds uniformly for all $j\in J_+(\alpha_0)$ and $\alpha\in[\alpha_0-\delta,\alpha_0+\delta]$. Moreover, note that when $|j|\rightarrow\infty$,
\[
\sqrt{(\alpha+j)^2-k^2}\sim |j|.
\]
 Also recall that $x_2\geq H+h$.  
We start from the definition of $v$:
\begin{equation}
\label{eq:v_12}
v=\int_{\alpha_0-\delta}^{\alpha_0+\delta}\sqrt{\alpha-\alpha_0}\phi(\alpha)e^{\i(\alpha+j) x_1-\sqrt{(\alpha+j)^2-k^2}(x_2-H)}\d \alpha\
\end{equation}
From direct computation,
\begin{align}
\frac{\partial v}{\partial r}-\i k v&=\int_{\alpha_0-\delta}^{\alpha_0+\delta}\left[\i(\alpha+j)\cos\theta_*-\sqrt{(\alpha+j)^2-k^2}\sin\theta_*-\i k \right]\nonumber\\&\cdot\sqrt{\alpha-\alpha_0}\phi(\alpha)e^{\i(\alpha+j) x_1-\sqrt{(\alpha+j)^2-k^2}(x_2-H)}\d \alpha\nonumber\\
\label{eq:rc_v12}
&:=\int_{\alpha_0-\delta}^{\alpha_0+\delta}\sqrt{\alpha-\alpha_0}{\psi}(\alpha)e^{\i(\alpha+j) x_1-\sqrt{(\alpha+j)^2-k^2}(x_2-H)}\d \alpha,
\end{align}
where
\[
\psi(\alpha)=\left[\i(\alpha+j)\cos\theta_*-\sqrt{(\alpha+j)^2-k^2}\sin\theta_*-\i k \right]\phi(\alpha)\in C_0^\infty(\alpha_0-\delta,\alpha_0+\delta).
\]

Since the exponential decay of the term $e^{-\sqrt{(\alpha+j)^2-k^2}(x_2-H)}$ when $x_2$ increases, the case that $x_2\rightarrow\infty$ is relatively easier. This is studied in the following theorem.
 
\begin{theorem}
\label{th:decay_rc_v121}Given $\phi\in C_0^\infty(\alpha_0-\delta,\alpha_0+\delta)$. 
Suppose $x_2-H>\frac{r}{2}$, then there are two  constants $C,\epsilon_0>0$ such that
\begin{equation}
\label{eq:decay_v121}
|v(x)|,\,\left|\frac{\partial v}{\partial r}-\i k v\right|\leq C e^{-\epsilon_0 |j|r}
\end{equation}
holds uniformly for all $j\in J_+(\alpha_0)$. 
\end{theorem}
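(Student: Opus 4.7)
The plan is to exploit the fact that in Case II the exponent $-\sqrt{(\alpha+j)^2-k^2}\,(x_2-H)$ is strictly negative with a gap depending linearly on $|j|$, and then to use the hypothesis $x_2-H>r/2$ to convert the decay in the vertical variable into decay in $r$. The first step will be to establish a uniform lower bound of the form
\[
\sqrt{(\alpha+j)^2-k^2}\;\geq\;c_0(1+|j|)\qquad\text{for all }\alpha\in[\alpha_0-\delta,\alpha_0+\delta],\;j\in J_+(\alpha_0),
\]
for some $c_0>0$. This combines two facts already in the paper: $\sqrt{(\alpha+j)^2-k^2}>2\delta_1$ uniformly, and $\sqrt{(\alpha+j)^2-k^2}\sim|j|$ as $|j|\to\infty$, so only finitely many values of $j$ need to be checked separately.

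With this bound in hand, the estimate for $|v(x)|$ is immediate. I would bound $|\sqrt{\alpha-\alpha_0}\phi(\alpha)|$ by a constant depending only on $\phi$, note that the integration interval has length $2\delta$, and use $x_2-H>r/2$ to get
\[
|v(x)|\;\leq\; C\,e^{-c_0(1+|j|)(x_2-H)}\;\leq\; C\,e^{-c_0(1+|j|)r/2}\;\leq\; C\,e^{-\epsilon_0|j|r},
\]
for $\epsilon_0:=c_0/2$ (absorbing the constant factor $e^{-c_0 r/2}\leq 1$ into $C$).

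For $\frac{\partial v}{\partial r}-ikv$, the same strategy applies to the representation \eqref{eq:rc_v12}, but the amplitude $\psi(\alpha)$ now carries an extra factor of order $1+|j|$ coming from $\sqrt{(\alpha+j)^2-k^2}\sin\theta_\ast$, so $|\psi(\alpha)|\leq C(1+|j|)$. The main obstacle is absorbing this polynomial prefactor in $|j|$ into the exponential. I will handle it by splitting the exponent using the fixed lower bound $x_2-H\geq h$: writing
\[
e^{-c_0(1+|j|)(x_2-H)}\;=\;e^{-c_0(1+|j|)h/2}\cdot e^{-c_0(1+|j|)(x_2-H-h/2)},
\]
I use $(1+|j|)e^{-c_0(1+|j|)h/2}\leq C$ uniformly in $j$, and then $x_2-H-h/2\geq(x_2-H)/2\geq r/4$ to conclude
\[
\left|\frac{\partial v}{\partial r}-ikv\right|\;\leq\; C\,e^{-c_0(1+|j|)r/4}\;\leq\; C\,e^{-\epsilon_0|j|r}
\]
with $\epsilon_0=c_0/4$. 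Redefining $\epsilon_0$ and $C$ by their minimum/maximum completes the proof; no delicate oscillatory-integral analysis is required here, since the dominating mechanism is pure exponential decay in the evanescent regime.
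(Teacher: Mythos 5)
Your proof is correct and follows essentially the same approach as the paper: use the uniform lower bound on $\sqrt{(\alpha+j)^2-k^2}$ together with $x_2-H>r/2$ to obtain exponential decay, and absorb the $O(|j|)$ amplitude factor in $\psi$ into the exponential. The paper compresses the absorption step into the single remark that the extra factor ``can be merged into the exponential term''; your explicit splitting of the exponent using $x_2-H\geq h$ and the bound $(1+|j|)e^{-c_0(1+|j|)h/2}\leq C$ is precisely the detail that remark glosses over.
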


\begin{proof}
Since $x_2-H>\frac{r}{2}$ and $\sqrt{(\alpha+j)^2-k^2}>2\epsilon_0|j|$, we can immediately get 
\[
|v|\leq \int_{\alpha_0-\delta}^{\alpha_0+\delta}\sqrt{|\alpha-\alpha_0|}|\phi(\alpha)|e^{-\sqrt{(\alpha+j)^2-k^2}(x_2-H)}\d\alpha\leq Ce^{-\epsilon_0|j| r}.
\]
The estimation holds similarly for $\frac{\partial v}{\partial r}-\i k v$. The only difference is that $|\psi|\sim|j|$, which can be merged into the exponential term. The proof is finished.
\end{proof}

The case that $x_2-H$ is smaller than $\frac{r}{2}$ is relatively more challenging, which will be discussed in the following theorem. 

\begin{theorem}
\label{th:decay_rc_v122}Given $\phi\in C_0^\infty(\alpha_0-\delta,\alpha_0+\delta)$. 
Suppose $x_2-H\leq \frac{r}{2}$, then there are two  constants $C,\epsilon_0>0$ such that
\begin{equation}
\label{eq:decay_v122}
|v(x)|,\,\left|\frac{\partial v}{\partial r}-\i k v\right|\leq C e^{-\epsilon_0 |j|h}r^{-3/2}
\end{equation}
holds uniformly for all $j\in J_+$.
\end{theorem}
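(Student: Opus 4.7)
The starting point is the observation that the hypothesis $x_2-H\le r/2$ forces $|x_1|\ge\tfrac{\sqrt{3}}{2}r$, so every bound of order $|x_1|^{-3/2}$ translates automatically into one of order $r^{-3/2}$. The strategy, therefore, is to reduce $v$ and $\frac{\partial v}{\partial r}-\i k v$ to Fourier-type integrals of $\sqrt{\alpha-\alpha_0}$ against a smooth amplitude in the variable $x_1$, apply the $|x_1|^{-3/2}$-decay already exploited in the proof of Theorem~\ref{th:decay_1}, and show that the amplitude carries an extra factor of $e^{-\epsilon_0|j|h}$ coming from the evanescent exponential.

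Concretely, I would write
\[
v(x)=e^{\i j x_1}\int_{\alpha_0-\delta}^{\alpha_0+\delta}\sqrt{\alpha-\alpha_0}\,\Psi_j(\alpha,x_2)\,e^{\i\alpha x_1}\d\alpha,\qquad \Psi_j(\alpha,x_2):=\phi(\alpha)\,e^{-\sqrt{(\alpha+j)^2-k^2}(x_2-H)},
\]
and analogously for $\frac{\partial v}{\partial r}-\i k v$ using the amplitude in \eqref{eq:rc_v12}. Because the parameter $\delta>0$ was chosen at the start of Section~\ref{sec:rc_1} so that $|\alpha+j|\ge k+\delta$ on the support of $\phi$ for every $j\in J_+(\alpha_0)$, there exists $\epsilon_0>0$ with $\sqrt{(\alpha+j)^2-k^2}\ge 2\epsilon_0\max(|j|,1)$ uniformly in $\alpha$ and $j\in J_+(\alpha_0)$. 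Differentiating $\Psi_j(\cdot,x_2)$ up to twice in $\alpha$ brings down bounded pre-factors (the quantities $(\alpha+j)/\sqrt{(\alpha+j)^2-k^2}$ and their derivatives stay bounded uniformly on $J_+(\alpha_0)$) multiplied by powers of $(x_2-H)$. Splitting the evanescent exponential in half and using the elementary inequality $s^n e^{-cs}\le C_n c^{-n}$ with $s=x_2-H$ and $c=\epsilon_0\max(|j|,1)$, together with the lower bound $x_2-H\ge h$, delivers
\[
\|\Psi_j(\cdot,x_2)\|_{C^2[\alpha_0-\delta,\alpha_0+\delta]}\le C\,e^{-\epsilon_0|j|h}
\]
uniformly in $j\in J_+(\alpha_0)$ and in $x_2$ with $H+h\le x_2\le H+r/2$.

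Feeding this into the Fourier-integral lemma from Section~\ref{sec:fourier} that underlies Theorem~\ref{th:decay_1} gives
\[
\left|\int_{\alpha_0-\delta}^{\alpha_0+\delta}\sqrt{\alpha-\alpha_0}\,\Psi_j(\alpha,x_2)\,e^{\i\alpha x_1}\d\alpha\right|\le C\,\|\Psi_j(\cdot,x_2)\|_{C^2}\,|x_1|^{-3/2},
\]
so that, combined with $|x_1|\ge\tfrac{\sqrt{3}}{2}r$, one obtains $|v(x)|\le C e^{-\epsilon_0|j|h}r^{-3/2}$. For $\frac{\partial v}{\partial r}-\i k v$ the only new feature is the amplitude factor $\i(\alpha+j)\cos\theta_*-\sqrt{(\alpha+j)^2-k^2}\sin\theta_*-\i k$ in \eqref{eq:rc_v12}, whose $C^2$-norm grows at most like $|j|$; since $|j|^M e^{-\epsilon_0|j|h}\le C(M,h)\,e^{-\epsilon_0'|j|h}$ for any $\epsilon_0'<\epsilon_0$, this polynomial growth is absorbed and the analogous bound follows.

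The main obstacle is the uniformity of the $C^2$ estimate for $\Psi_j(\cdot,x_2)$: one must verify that the derivatives of $(\alpha+j)/\sqrt{(\alpha+j)^2-k^2}$ remain bounded uniformly in $j\in J_+(\alpha_0)$, that the finitely many $j\in J_+(\alpha_0)$ for which $\sqrt{(\alpha+j)^2-k^2}$ does not grow with $|j|$ are covered by the uniform lower bound $\ge 2\delta_1$, and that the trade-off $s^n e^{-cs}\le C_n c^{-n}$ is carried out carefully enough to yield a constant independent of both $j$ and $x_2-H\in[h,r/2]$. Once this uniformity is in place, the rest of the argument is a direct application of the Fourier-decay tools already established.
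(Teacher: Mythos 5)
Your argument follows the paper's proof in all essentials: use $x_2-H\le r/2$ to force $|x_1|\ge\tfrac{\sqrt{3}}{2}r$, recognize $v$ and $\frac{\partial v}{\partial r}-\i k v$ as Class-A integrals with a $\sqrt{\alpha-\alpha_0}$ singularity (giving $O(|x_1|^{-3/2})$ via Corollary~\ref{cr:decay_inta2}), observe that the constant carries $\alpha$-derivatives of $\phi(\alpha)e^{-\sqrt{(\alpha+j)^2-k^2}(x_2-H)}$, and absorb the resulting powers of $(x_2-H)$ into the evanescent exponential using the lower bound $x_2-H\ge h$. The paper phrases that last step as a monotonicity argument for $t^3 e^{-\epsilon_0|j|t}$ on $[h,\infty)$ rather than your ``split the exponential'' inequality, but the two are interchangeable; and your extra care about uniformity in $j\in J_+(\alpha_0)$ and about the linear-in-$|j|$ growth of the radiation-condition amplitude only makes explicit what the paper leaves implicit.
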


\begin{proof}
Since $x_2-H\leq \frac{r}{2}$, then $|x_1|\geq \frac{\sqrt{3}}{2}r>\frac{r}{2}$. Note that both \eqref{eq:v_12} and \eqref{eq:rc_v12} have exactly the same form as $I_2^A(r)$ defined by \eqref{eq:inta2}, then from Corollary \ref{cr:decay_inta2},
\[
|v|,\,\left|\frac{\partial v}{\partial r}-\i k v\right|\leq C\left|x_1\right|^{-3/2}
=Cr^{-3/2}
\]
where $C$ is determined by $\left\|\phi(\alpha)e^{-\sqrt{(\alpha+j)^2-k^2}(x_2-H)}\right\|_{C^3(\R)}$, where the norm of $C^3(\R)$ is computed with respect to $\alpha$ with fixed $x$. 

Note that the fact that $\left\|\phi(\alpha)e^{-\sqrt{(\alpha+j)^2-k^2}(x_2-H)}\right\|_{C^3(\R)}$ also depends on $x_2$, which is not bounded when $r\rightarrow\infty$. Note that the leading term with respect to $x_2-H$ has the form of
\[
(x_2-H)^3 e^{-\sqrt{(\alpha+j)^2-k^2}(x_2-H)},
\]
where the coefficients are ignored. Define
\[
f(t):=t^3 e^{-\epsilon_0|j| t},\quad t\geq h.
\]
Since the minimum value of $f$ is achieved at the point $t=\frac{3}{\epsilon_0|j|}$, when $|j|\rightarrow\infty$, $t<h$. Thus 
\[
f(t)\leq f(h)=h^3 e^{-\epsilon_0|j|h},
\]
which finishes the proof.

\end{proof}

\subsection{Case III: $|\alpha_0+j|=k$}
\label{sec:v3}

In this section we consider the particular case that $|\alpha_0+j|=k$ where $j\in\Z$. For $\alpha\in(\alpha_0-\delta,\alpha_0+\delta)$, either $|\alpha+j|\leq k$ or $|\alpha+j|\geq k$. Since both cases are complex and the proofs are long, we will discuss them in two subsections. For simplicity, we assume that $\alpha_0+j=k$, then  (i) when $\alpha\in(\alpha_0-\delta,\alpha_0]$, $\alpha+j\leq k$ (see Section \ref{sec:v13_1}),  (ii) when $\alpha\in[\alpha_0,\alpha_0+\delta)$, $\alpha+j\geq k$ (see Section \ref{sec:v13_2}). The case $\alpha_0+j=-k$
 is very similar thus we will not discuss in this paper.

\subsubsection{Case (i): $\alpha\in(\alpha_0-\delta,\alpha_0]$}
\label{sec:v13_1}

Similar to \eqref{eq:def_vj}, with the polar coordinate and a slightly different definition of $\phi$, 
\begin{align}
v(x)&=\int_0^\beta \sqrt{1-\cos\theta}\sin\theta\phi(k\cos\theta-j)e^{\i k r\cos(\theta-\theta_*)}\d\theta\nonumber\\\label{eq:v_13i}
&:=\int_0^\beta \sin^2\theta\psi(\theta)e^{\i k r\cos(\theta-\theta_*)}\d\theta
\end{align}
where $\beta=\arccos(1-\delta/k)>0$ is a small value, and
\[
\psi(\theta):=\i \sqrt{2}\phi(k\cos\theta-j)\frac{\sin^2(\theta/2)}{\sin^2\theta}\in C^\infty[0,\beta)
\]
which vanishes around $\theta=\beta$. We study the decay of $v$ in the following theorem.

\begin{theorem}
\label{th:decay_v_13i}
Let $\psi\in C^\infty[0,\beta)$ be a function that vanishes around $\beta$. Then there is a constant $C>0$ which only depends on $\psi$ such that
\begin{equation}
\label{eq:decay_v_13i}
|v(x)|\leq Cr^{-1/2}
\end{equation}
holds uniformly for all $x=r(\cos\theta_*,\sin\theta_*)+(0,H)$ with $x_2\geq H+h$.
\end{theorem}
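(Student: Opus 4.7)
The plan is to adapt the change-of-variable strategy from the proof of Theorem~\ref{th:v_11_decay}, while exploiting the important difference that here the amplitude contains \emph{no} singular factor like $\sqrt{\alpha-\alpha_0}$: the combination $\sqrt{1-\cos\theta}\sin\theta$ has already been packaged as $\sin^2\theta$ times a smooth function, so the integrand $\sin^2\theta\,\psi(\theta)$ is $C^\infty$ on $[0,\beta]$, vanishes to second order at $\theta=0$, and vanishes identically near $\theta=\beta$. First I would apply the substitution $t=\sin((\theta-\theta_*)/2)$, which is analytic and bijective from $[0,\beta]$ onto the compact interval $[t_0,t_1]=[-\sin(\theta_*/2),\,\sin((\beta-\theta_*)/2)]\subset(-1,1)$. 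Since $\cos(\theta-\theta_*)=1-2t^2$ and $\d\theta=\tfrac{2}{\sqrt{1-t^2}}\d t$, this transforms the integral into
\[
v(x)=e^{\i k r}\int_{t_0}^{t_1}\zeta(t)\,e^{-2\i k r t^2}\,\d t,\qquad \zeta(t)=\frac{2\sin^2(2\arcsin t+\theta_*)\,\psi(2\arcsin t+\theta_*)}{\sqrt{1-t^2}},
\]
with $\zeta\in C^\infty[t_0,t_1]$. Note that $\zeta$ vanishes at $t_1$ (from $\psi$ vanishing near $\beta$) and to second order at $t_0$ (from $\sin^2\theta|_{\theta=0}=0$).

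Next I would bound this smooth oscillatory integral. Its phase $-2krt^2$ has a single stationary point $t=0$ which may lie in $[t_0,t_1]$. Because the amplitude carries no singular factor, the standard Fresnel estimate, which is the $\sqrt{t-a}$-free special case of Corollary~\ref{cr:decay_intc2} (equivalently van der Corput of order two), yields $|v(x)|\leq Cr^{-1/2}$, with $C$ depending only on finitely many $C^n$ seminorms of $\zeta$ on $[t_0,t_1]$, which in turn are controlled by finitely many seminorms of $\psi$.

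The step I expect to be the main obstacle is obtaining the bound \emph{uniformly} in $\theta_*\in(0,\pi)$. The difficulty is that as $\theta_*\to\pi$ we have $t_0\to -1$ and the Jacobian $1/\sqrt{1-t^2}$ becomes singular, so the $C^n$ bounds on $\zeta$ are not uniform in $\theta_*$ through the substitution alone. To cope with this I would split the analysis: when $\theta_*\geq 2\beta$, the phase $kr\cos(\theta-\theta_*)$ has no stationary point on $[0,\beta]$ and $|\sin(\theta-\theta_*)|\geq \sin\beta$; a single integration by parts in the original variable $\theta$, using the rewriting $e^{\i kr\cos(\theta-\theta_*)}\d\theta=\frac{1}{-\i kr\sin(\theta-\theta_*)}\d e^{\i kr\cos(\theta-\theta_*)}$ together with the vanishing of $\sin^2\theta\,\psi(\theta)$ at both endpoints of $[0,\beta]$, gives $O(r^{-1})$ with no substitution required. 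When $\theta_*\in(0,2\beta)$, the interval $[t_0,t_1]$ is contained in a fixed compact subset of $(-1,1)$, so the substitution argument above yields a uniform $O(r^{-1/2})$ bound. Combining the two ranges produces the required uniform estimate.
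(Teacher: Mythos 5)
Your overall plan---substitute $t=\sin((\theta-\theta_*)/2)$ to reach a Fresnel-type oscillatory integral with a quadratic phase, then split in $\theta_*$ to cope with the non-uniformity of the Jacobian $1/\sqrt{1-t^2}$---is close in spirit to the paper's. But there is a concrete false step in your split. You assert that for $\theta_*\geq 2\beta$ one has $|\sin(\theta-\theta_*)|\geq\sin\beta$ uniformly for $\theta\in[0,\beta]$, and you rely on this to bound the integrand produced by integration by parts. This fails as soon as $\theta_*$ is close to $\pi$: for $\theta\in[0,\beta]$ one has $\theta_*-\theta\in[\theta_*-\beta,\theta_*]$, and $\sin(\theta_*-\theta)\to 0$ as $\theta_*\to\pi$ and $\theta\to 0$. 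Equivalently, the stationary point of the phase sits at $\theta=\theta_*-\pi$, which approaches the left endpoint $\theta=0$ from below as $\theta_*\to\pi$. So the bound you invoked does not give uniform control of $\bigl[\sin^2\theta\,\psi(\theta)/\sin(\theta-\theta_*)\bigr]'$, and your $O(r^{-1})$ estimate is not justified for $\theta_*\in[\pi-2\beta,\pi]$.

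This is precisely why the paper splits into \emph{three} regimes rather than two: for $\theta_*\in(2\beta,\pi-2\beta)$ the lower bound $|\sin(\theta-\theta_*)|\geq\sin\beta$ genuinely holds and integration by parts applies directly; for $\theta_*\in[0,2\beta]$ the substitution $t=\sin((\theta-\theta_*)/2)$ reduces to the integral $I^B_2(r)$ and Theorem~\ref{th:decay_intb2} gives $O(r^{-1/2})$ (this matches your treatment); and the additional regime $\theta_*\in[\pi-2\beta,\pi]$ is handled by writing $\tau_*=\pi-\theta_*$, using $\cos(\theta-\theta_*)=-\cos(\theta+\tau_*)$ and the reflected substitution $t=\sin((\theta+\tau_*)/2)$, which again yields an integral of the form \eqref{eq:intb2}. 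Your integration-by-parts idea can in fact be salvaged in this third regime---writing $\sin(\theta-\theta_*)=-\sin(\theta+\tau_*)$, the ratios $\sin\theta/\sin(\theta+\tau_*)$ and $\sin^2\theta/\sin^2(\theta+\tau_*)$ are bounded uniformly in $\tau_*$, so the quadratic vanishing of $\sin^2\theta$ at $\theta=0$ compensates the degeneracy of $\sin(\theta-\theta_*)$ and the post-IBP integrand is uniformly bounded---but that is a different argument from the one you wrote, and as it stands your proof has a gap exactly where the paper needs its third case.
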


\begin{proof}Due to different types of singularities, we consider the following cases for the location of $\theta_*$  separately: i) $\theta_*\in(2\beta,\pi-2\beta)$; ii) $\theta_*\in[0,2\beta]$; iii) $\theta_*\in[\pi-2\beta,\pi]$.\\

\noindent
i) First consider the case that $\theta_*\in(2\beta,\pi-2\beta)$, which implies that $\left|\sin(\theta-\theta_*)\right|\geq\sin(\beta)>0$ holds uniformly for $\theta\in[0,\beta]$. In this case, with integration by parts,
\begin{align*}
v(x)&=-\frac{1}{\i k r}\int_0^\beta\frac{\sin^2\theta}{\sin(\theta-\theta_*)}\psi(\theta)\d e^{\i k r\cos(\theta-\theta_*)}\\&=\frac{1}{\i k r}\int_0^\beta\left[\frac{\sin^2\theta}{\sin(\theta-\theta_*)}\psi(\theta)\right]'e^{\i k r\cos(\theta-\theta_*)}\d\theta,
\end{align*}
where the integrand is smooth and uniformly bounded. Thus there is a constant $C>0$ such that
\[
|v(x)|\leq Cr^{-1}
\]
holds uniformly for all $\theta\in(2\beta,\pi-2\beta)$. We can even reach a higher order convergence rate by integration by parts again. Note that
\[
v(x)=\frac{1}{r}\int_0^\beta \sin\theta\zeta(\theta)e^{\i k r\cos(\theta-\theta_*)}\d\theta
\]
where
\[
\zeta(\theta)=\frac{1}{\i k}\left[\frac{2\cos\theta}{\sin(\theta-\theta_*)}\psi(\theta)-\sin\theta\frac{\cos(\theta-\theta_*)}{\sin^2(\theta-\theta_*)}\psi(\theta)+\frac{\sin\theta}{\sin(\theta-\theta_*)}\psi'(\theta)\right]\in C^\infty[0,\beta)
\]
which vanishes around $\beta$. Since the boundary values vanish, we can again get
\begin{align*}
v(x)&=-\frac{1}{\i k r^2}\int_0^\beta\frac{\sin\theta}{\sin(\theta-\theta_*)}\zeta(\theta)\d e^{\i k r\cos(\theta-\theta_*)}\\
&=\frac{1}{\i k r}\int_0^\beta\left[\frac{\sin\theta}{\sin(\theta-\theta_*)}\zeta(\theta)\right]' e^{\i k r\cos(\theta-\theta_*)}\d\theta
\end{align*}
which implies that there is a constant $C>0$ such that
\[
|v(x)|\leq Cr^{-2}
\]
holds uniformly for all $\theta\in(2\beta,\pi-2\beta)$.\\

\noindent
ii) Second consider the case that $\theta_*\in[0,2\beta]$.  The technique of integration by parts now fails since $\sin(\theta-\theta_*)$ is close/equal to zero since $|\theta-\theta_*|$ can be very small. In this case, let 
\[
t:=\sin\left(\frac{\theta-\theta_*}{2}\right),\quad a=\sin\left(\frac{\theta_*}{2}\right)
\]
then $\theta=2\arcsin t+\theta_*$ and
\[
v(x)=e^{\i k r}\int_{-a}^{\gamma}(t+a)^2\zeta(t)e^{-2\i k r t^2}\d t
\]
where  $\gamma=\sin((\beta-\theta_*)/2)$ which is close to $0$; and
\[
\zeta(t)=\frac{2}{\sqrt{1-t^2}}\psi(2\arcsin t+\theta_*)\left(\frac{\sin\left(2\arcsin t+\theta_*\right)}{t+a}\right)^2\in C^\infty[-a,\gamma)
\]
which vanishes at $\gamma$. Here we have used the trigonometric dentity
\[
t+a=\sin\left(\frac{\theta-\theta_*}{2}\right)+\sin\left(\frac{\theta_*}{2}\right)=2\sin\left(\frac{\theta}{4}\right)\cos\left(\frac{\theta-2\theta_*}{4}\right),
\]
which implies that
\[
\frac{\sin(\theta)}{t+a}=\frac{\sin\left(\theta\right)}{\sin\left(\frac{\theta-\theta_*}{2}\right)+\sin\left(\frac{\theta_*}{2}\right)}=\frac{\sin\left(\theta\right)}{\sin\left(\theta/4\right)}\frac{1}{2\cos((\theta-2\theta_*)/4)}\in C^\infty[0,2\beta].
\]
Replacing $\theta$ by $2\arcsin t+\theta_*$, then the function is also in $[-a,\gamma]$.

Note that now $v$ has exactly the same form of $I_B^2(r)$ defined by \eqref{eq:intb2}. Using Theorem \ref{th:decay_intb2}, there is a constant $C>0$ such that
\[
|v(x)|\leq Cr^{-1/2}
\]
holds uniformly for $\theta_*\in[0,2\beta]$.\\

\noindent
iii) At the end we consider the case that $\theta_*\in[\pi-2\beta,\pi]$. Let $\theta_*:=\pi-\tau_*$ then $\tau_*\in[0,2\beta]$ then
\[
\cos(\theta-\theta_*)=\cos(\theta+\tau_*-\pi)=-\cos(\theta+\tau_*).
\]
Then
\[
v(x)=\int_0^\beta\sin^2\theta\psi(\theta) e^{-\i k r\cos(\theta+\tau_*)}\d\theta.
\]
Let
\[
t:=\sin\left(\frac{\theta+\tau_*}{2}\right),\quad a=\sin\left(\frac{\tau_*}{2}\right)
\]
then with similar process to Case ii),
\[
v(x)=e^{-\i k r}\int_{a}^{\gamma}(t-a)^2\zeta(t)e^{2\i k r t^2}\d t
\]
where  $\gamma=\sin((\beta+\tau_*)/2)>0$ is close to $0$; and
\[
\zeta(t)=\frac{2}{\sqrt{1-t^2}}\psi(2\arcsin t-\tau_*)\left(\frac{\sin\left(2\arcsin t-\tau_*\right)}{t-a}\right)^2\in C^\infty[a,\gamma)
\]
which vanishes at $\gamma$, since
\[
t-a=\sin\left(\frac{\theta+\tau_*}{2}\right)-\sin\left(\frac{\tau_*}{2}\right)=2\sin\left(\frac{\theta}{4}\right)\cos\left(\frac{\theta-2\tau_*}{4}\right).
\]
Again the form is the same as $I_B^2(r)$ thus we can use Theorem \ref{th:decay_intb2}. Thus we have a constant $C>0$ such that
\[
|v(x)|\leq Cr^{-1/2}.
\]
The proof is then finished.
\end{proof}

Now we move on to the radiation condition. With similar process, we have
\begin{equation}
\label{eq:v13i_rc}
\frac{\partial v}{\partial r}-\i k v=\i k\int_0^\beta\sin^2\theta\left(\cos(\theta-\theta_*)-1\right){\psi}(\theta)e^{\i k r\cos(\theta-\theta_*)}\d\theta.
\end{equation}
We will investigate the decay of \eqref{eq:rc_v13i} in the following theorem.

\begin{theorem}
\label{th:rc_v13i}
Suppose $\psi\in C^\infty[0,\beta)$ which vanishes near $\beta$. Then there is a constant $C>0$ depends on $\psi$ such that
\begin{equation}
\label{eq:rc_v13i}
\left|\frac{\partial v}{\partial r}-\i k v\right|\leq Cr^{-3/2}.
\end{equation}
\end{theorem}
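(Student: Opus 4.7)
The plan is to mirror the three-case split of $\theta_*$ used in the proof of Theorem \ref{th:decay_v_13i} and exploit the identity $\cos(\theta-\theta_*)-1=-2\sin^2((\theta-\theta_*)/2)$, which rewrites \eqref{eq:v13i_rc} as
\[
\frac{\partial v}{\partial r}-\i k v = -2\i k\int_0^\beta \sin^2\theta\,\sin^2\!\left(\tfrac{\theta-\theta_*}{2}\right)\psi(\theta)\,e^{\i k r\cos(\theta-\theta_*)}\,d\theta.
\]
The additional factor $\sin^2((\theta-\theta_*)/2)$, compared with the integral \eqref{eq:v_13i} already treated in Theorem \ref{th:decay_v_13i}, is precisely the extra vanishing that should upgrade the $O(r^{-1/2})$ decay obtained there to the target $O(r^{-3/2})$.

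In Case i, $\theta_*\in(2\beta,\pi-2\beta)$, the phase is non-stationary on $[0,\beta]$ and $\sin^2((\theta-\theta_*)/2)/\sin(\theta-\theta_*)=\tfrac12\tan((\theta-\theta_*)/2)$ is smooth and bounded. I will integrate by parts twice using $d e^{\i k r\cos(\theta-\theta_*)}=-\i k r\sin(\theta-\theta_*)e^{\i k r\cos(\theta-\theta_*)}\,d\theta$; the boundary contributions vanish at $\theta=\beta$ because $\psi(\beta)=0$ and at $\theta=0$ because $\sin^2\theta$ vanishes to second order, yielding the stronger bound $O(r^{-2})$.

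In Case ii, $\theta_*\in[0,2\beta]$, I reuse the substitution $t=\sin((\theta-\theta_*)/2)$, $a=\sin(\theta_*/2)$ of the decay proof, under which the present integral becomes
\[
\frac{\partial v}{\partial r}-\i k v = -2\i k\,e^{\i k r}\!\int_{-a}^{\gamma} t^2(t+a)^2\tilde\zeta(t)\,e^{-2\i k r t^2}\,dt,
\]
with the same smooth $\tilde\zeta$ vanishing at $\gamma$. A single integration by parts through $t\,e^{-2\i k r t^2}\,dt=-(4\i k r)^{-1}\,d\bigl(e^{-2\i k r t^2}\bigr)$ is now available: the boundary term at $\gamma$ vanishes because $\tilde\zeta(\gamma)=0$, and at $t=-a$ because $(t+a)^2$ vanishes there. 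Expanding $[t(t+a)^2\tilde\zeta]'$ splits the remainder into three oscillatory integrals with smooth densities $(t+a)^2\tilde\zeta$, $2t(t+a)\tilde\zeta$, and $t(t+a)^2\tilde\zeta'$, each of the $I_B^2$-form \eqref{eq:intb2} and therefore bounded by $O(r^{-1/2})$ via Theorem \ref{th:decay_intb2}. Combined with the $(4\i k r)^{-1}$ prefactor this gives the desired $O(r^{-3/2})$. Case iii, $\theta_*\in[\pi-2\beta,\pi]$, proceeds along the same lines after the reflection $\theta_*=\pi-\tau_*$ and the substitution $t=\sin((\theta+\tau_*)/2)$ with $a=\sin(\tau_*/2)\geq 0$: the factor $\sin^2((\theta-\theta_*)/2)$ now equals $1-t^2$, but the $(t-a)^2$ factor coming from $\sin^2\theta$ provides the vanishing at the (possibly boundary) stationary point $t=0$ that is needed to run the same single IBP argument uniformly in $\tau_*\in[0,2\beta]$; when $\tau_*>0$ the phase is in fact non-stationary on $[a,\gamma]$ and one even gets $O(r^{-2})$.

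The principal obstacle is Case ii, where the stationary point $t=0$ of the phase $-2krt^2$ sits strictly inside $[-a,\gamma]$, so the non-stationary IBP of Case i is unavailable. The mechanism that rescues the argument is the exact matching between the double zero $t^2$ produced by $\sin^2((\theta-\theta_*)/2)$ and the simple zero of $\partial_t(-2krt^2)$ at $t=0$: this is precisely what allows a single clean integration by parts whose two boundary contributions vanish for unrelated reasons, and whose remainder reduces to the stationary-phase template already handled in Section \ref{sec:fourier}.
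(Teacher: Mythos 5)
Your overall plan — rewrite $\cos(\theta-\theta_*)-1=-2\sin^2((\theta-\theta_*)/2)$, mirror the three-case split of $\theta_*$, change variable to $t=\sin((\theta\mp\theta_*)/2)$, integrate by parts once, and show the remainder is $O(r^{-1/2})$ — is the right idea and in Cases i and ii is essentially the same calculation as the paper's (the paper works with $\tan((\theta-\theta_*)/2)$ before substituting rather than after, but that is cosmetic). The one minor slip in Case ii is that the middle density $2t(t+a)\tilde\zeta$ is not literally of $I_B^2$-form; writing $2t(t+a)=2(t+a)^2-2a(t+a)$ and noting the same proof covers a $(x+a)^1$-density repairs it.

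The genuine gap is in Case iii. After the reflection $\theta_*=\pi-\tau_*$ and the substitution $t=\sin((\theta+\tau_*)/2)$, $a=\sin(\tau_*/2)\ge 0$, the integral becomes, up to smooth factors, $\int_a^\gamma (t-a)^2\hat\zeta(t)e^{2ikrt^2}\,dt$. The stationary point $t=0$ lies \emph{at or outside} the left endpoint, not in the interior, so the $(t-a)^2$ factor vanishes at $t=a$, not at $t=0$, and this is \emph{not} the $I_B^2$ situation of Theorem \ref{th:decay_intb2}. After one IBP through $(t-a)^2\hat\zeta\,e^{2ikrt^2}\,dt=\frac{1}{4\i kr}\frac{(t-a)^2\hat\zeta}{t}\,de^{2ikrt^2}$, the boundary terms do vanish, but the remainder density is
\[
\frac{d}{dt}\!\left[\frac{(t-a)^2\hat\zeta(t)}{t}\right]=\Bigl(1-\frac{a^2}{t^2}\Bigr)\hat\zeta(t)+\frac{(t-a)^2}{t}\hat\zeta'(t),
\]
whose $t$-derivative contains $2a^2/t^3$, which equals $2/a$ at $t=a$ and is therefore not uniformly bounded as $\tau_*\to 0$. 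Your remark that ``when $\tau_*>0$ one even gets $O(r^{-2})$'' is true only for fixed $\tau_*>0$, with a constant that blows up as $\tau_*\to 0$, so it cannot be summed into a uniform $O(r^{-3/2})$ bound. To close the argument you need something beyond ``the same single IBP'': either a Van der Corput-type bound exploiting that $\int_a^\gamma \frac{a^2}{t^3}\,dt\le\frac12$ uniformly, or — as the paper does — a split into $[0,r^{-1/2}]\cup[r^{-1/2},\beta]$ together with the observation that $\eta(\theta)=\sin\theta/\sin(\theta+\tau_*)$ is bounded by $1$ and strictly increasing, which is used to absorb the $\eta'$-blowup via $\int|\lambda'(t)|/t\,dt\le r^{1/2}\int\lambda'\,dt\le Cr^{1/2}$. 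As written, the Case iii estimate is asserted, not proved.
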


\begin{proof}
Similar to the proof of Theorem \ref{th:decay_v_13i}, we still discuss the problems for the following three cases, i.e, i) $\theta_*\in(2\beta,\pi-2\beta)$; ii) $\theta_*\in[0,2\beta]$; iii) $\theta_*\in[\pi-2\beta,\pi]$, separately.\\ 

\noindent 
i) First consider $\theta_*\in(2\beta,\pi-2\beta)$,  where the decay rate $O(r^{-2})$ comes directly from integration by parts, following the proof of i)  Theorem \ref{th:decay_v_13i}.\\

\noindent
ii) Then we consider $\theta_*\in[0,2\beta]$. In this case, both $\theta$ and $\theta_*$ are close to $0$. With integration by parts, following the proof of Theorem  \ref{th:rc_v111},
\begin{align*}
\frac{\partial v}{\partial r}-\i k v&=\frac{1}{r}\int_0^\beta \sin^2\theta\tan\left(\frac{\theta-\theta_*}{2}\right)\psi(\theta) \d e^{\i k r\cos(\theta-\theta_*)}\\
&=\frac{1}{r}\int_0^\beta\left[\sin(\theta) \sin\left({\theta-\theta_*}\right)\zeta_1(\theta)+\sin^2\theta\zeta_2(\theta)\right]
e^{\i k r\cos(\theta-\theta_*)}\d\theta\\
&:=\frac{1}{r}(I)+\frac{1}{r}(II)
\end{align*}
where
\begin{align*}
\zeta_1(\theta)&=-2\cos\theta\frac{\tan((\theta-\theta_*)/2)}{\sin(\theta-\theta_*)}\psi(\theta)-\sin\theta \frac{\tan((\theta-\theta_*)/2)}{\sin(\theta-\theta_*)}\psi'(\theta);\\
\zeta_2(\theta)&=-\frac{1}{\cos(\theta-\theta_*)+1}\psi(\theta).
\end{align*}
Both functions are $C^\infty[0,\beta)$ and vanish around $\beta$. 
where $\zeta_1$ and $\zeta_2$ are smooth functions and any order derivative equals to $0$ at $\beta$. 
We need to consider the above integrals separately. For (I), again with integration by parts,
\begin{align*}
|(I)|
=&\left|-\frac{1}{\i k r}\int_0^\beta\sin\theta\zeta_1(\theta)\d e^{\i k r \cos(\theta-\theta_*)}\right|\\
\leq&\frac{1}{\i k r}\int_0^\beta\left|\cos\theta\zeta_1(\theta)+\sin\theta\zeta'_1(\theta)\right|\d\theta\leq Cr^{-1}.
\end{align*}
For (II), following the proof of Theorem \ref{th:decay_v_13i} to get
\[
|(II)|\leq Cr^{-1/2}
\]
holds uniformly. Thus we conclude that for $\theta_*\in[0,2\beta]$,
\[
\frac{\partial v}{\partial r}-\i k v=O\left(r^{-3/2}\right).
\]
The proof for Case ii) is then finished.\\

\noindent
iii) When $\theta_*\in[\pi-2\beta,\pi]$, the above strategy fails since $\cos(\theta-\theta_*)\approx-1$ then $\zeta_2(\theta)$ becomes singular. In this case, following again Case iii) of the proof of Theorem \ref{th:decay_v_13i}, let $\theta_*:=\pi-\tau_*$ then $\tau_*\in[0,2\beta]$. Then with integration by parts,
\begin{align*}
\frac{\partial v}{\partial r}-\i k v&=-\i k\int_0^\beta\sin^2\theta\left(\cos(\theta+\tau_*)+1\right){\psi}(\theta)e^{-\i k r\cos(\theta+\tau_*)}\d\theta\\
&=-\frac{1}{r}\int_0^\beta\sin^2\theta\frac{\cos(\theta+\tau_*)+1}{\sin(\theta+\tau_*)}{\psi}(\theta)\d e^{-\i k r\cos(\theta+\tau_*)}\\
&=\frac{1}{r}\int_0^\beta\left[\eta(\theta)\zeta_1(\theta)+\eta^2(\theta)\zeta_2(\theta)\right]e^{-\i k r\cos(\theta+\tau_*)}\d\theta\\
&:=\frac{1}{r}(I)+\frac{1}{r}(II),
\end{align*}
where
\[
\eta(\theta)=\frac{\sin\theta}{\sin(\theta+\tau_*)}\in C^\infty[0,\beta),\quad |\eta(\theta)|\leq 1,\,\tau_*\in[0,2\beta],\,\theta\in[0,\beta]
\]
and is strictly increasing for $\theta\in[0,\beta)$; and
\begin{align*}
\zeta_1(\theta)&=2\cos\theta[\cos(\theta+\tau_*)+1]\psi(\theta)+\sin\theta[\cos(\theta+\tau_*)+1]\psi'(\theta);\\
\zeta_2(\theta)&=-\frac{\sin^2(\theta+\tau_*)}{2\sin^2((\theta+\tau_*)/2)}\psi(\theta)
\end{align*}
where $\zeta_1,\,\zeta_2\in C^\infty[0,\beta)$ and vanish around $\beta$. 

For the term (I), to avoid potential singularity at $0$, we first define
\[
(I)=\int_{0}^{r^{-1/2}}+\int_{r^{-1/2}}^\beta\eta(\theta)\zeta_1(\theta)e^{-\i k r\cos(\theta+\tau_*)}\d\theta:=(I_1)+(I_2).
\]
From direct computation,
\[
|(I_1)|\leq \int_0^{r^{-1/2}}|\eta(\theta)||\zeta_1(\theta)|\d\theta\leq Cr^{-1/2}.
\]
For the term $(I_2)$, we need to change the variable. Let  $t:=\sin((\theta+\tau_*)/2)$,
\begin{align*}
(I_2)=e^{-\i k r}\int_{\gamma_1}^{\gamma_2}\lambda(t)\rho_1(t)e^{2\i k r t^2}\d t
\end{align*}
where $\gamma_1=\sin((r^{-1/2}+\tau_*)/2)$ and $\gamma_2=\sin((\beta+\tau_*)/2)$, and
\[
\lambda(t)=\eta\left(2\arcsin t-\tau_*\right)\in C^\infty[\lambda_1,\lambda_2),\,\rho_1(t)=\frac{2}{\sqrt{1-t^2}}\zeta_1\left(2\arcsin t-\tau_*\right)\in C^\infty[\lambda_1,\lambda_2)
\]
and $\rho_1$ vanishes around $\gamma_2$. With integration by parts,
\begin{align*}
(I_2)&=\frac{e^{-\i k r}}{4\i k r}\int_{\gamma_1}^{\gamma_2}\frac{1}{t}\lambda(t)\rho_1(t)\d e^{2\i k r t^2}\\
&=\frac{e^{-\i k r}}{4\i k r}\left.\frac{1}{t}\lambda(t)\rho_1(t) e^{2\i k r t^2}\right|^{\gamma_2}_{\gamma_1}-\frac{e^{-\i k r}}{4\i k r}\int_{\gamma_1}^{\gamma_2}\left[\frac{\lambda(t)\rho'_1(t)+\lambda'(t)\rho_1(t)}{t}-\frac{\lambda(t)\rho_1(t)}{t^2}\right]e^{2\i k r t^2}\d t.
\end{align*}
Since $\gamma_1\geq \frac{1}{2}r^{-1/2}$ and $\gamma_2>\gamma_1$, we can estimate immediately that
\[
\left|\frac{e^{-\i k r}}{4\i k r}\left.\frac{1}{t}\lambda(t)\rho_1(t) e^{2\i k r t^2}\right|^{\gamma_2}_{\gamma_1}\right|\leq Cr^{-1/2}.
\]
Since $\lambda$, $\rho_1$ and $\rho'_1$ are uniformly bounded in $[0,\beta]$,
\[
\left|\int_{\gamma_1}^{\gamma_2}\left[\frac{\lambda(t)\rho'_1(t)}{t}-\frac{\lambda(t)\rho_1(t)}{t^2}\right]e^{2\i k r t^2}\d t\right|\leq C\int_{\gamma_1}^{\gamma_2}\frac{1}{t^2}\d t=\left.-\frac{C}{t}\right|^{\gamma_2}_{\gamma_1}\leq Cr^{1/2}.
\]
Now we move on to the term in $(I_2)$ that involves $\lambda'(t)$. From direct computation,
\[
\eta'(\theta)=\frac{2\sin\tau_*}{1-\cos(2\theta+2\tau_*)}=\frac{\sin\tau_*}{\sin^2(\theta+\tau_*)}\rightarrow\frac{1}{\sin\tau_*},\quad \theta\rightarrow 0.
\]
It means that when $\tau_*$ is close to $0$, the value will become very large thus $\lambda'$ is not uniformly bounded. But note that $\eta'(\theta)>0$ for any $\theta\geq 0$, it implies that $\lambda$ is strictly increasing. Thus
\begin{align*}
\left|\int_{\gamma_1}^{\gamma_2}\frac{\lambda'(t)\rho_1(t)}{t}e^{2\i k r t^2}\d t\right|&\leq C\int_{\gamma_1}^{\gamma_2}\frac{\lambda'(t)}{t}\d t\\&\leq C\sqrt{r}\int_{\gamma_1}^{\gamma_2}\lambda'(t)\d t=C\sqrt{r}\left[\lambda(\gamma_2)-\lambda(\gamma_2)\right]\leq C{r}^{1/2}.
\end{align*}
Thus we finally get that 
\[
|(I_2)|\leq Cr^{-1/2}.
\]
Together with the estimation of $(I_1)$, we conclude that
\[
|(I)|\leq Cr^{-1/2}.
\]

Now we move on to the term $(II)$. We can simply treat $\eta^2$ as $\eta$ as in $(I)$ to reach the same result, since $\eta^2$ is again a strictly increasing function. So finally we can still prove that
\[
\left|\frac{\partial v}{\partial r}-\i k v\right|\leq Cr^{-3/2}.
\]

\end{proof}

\subsubsection{Case (ii): $\alpha\in[\alpha_0,\alpha_0+\delta)$}
\label{sec:v13_2}
Then we consider the case that $\alpha\in[\alpha_0,\alpha_0+\delta)$. In this case, $\i\sqrt{k^2-(\alpha+j)^2}=-\sqrt{(\alpha+j)^2-k^2}$. Let $t:=\alpha+j-k$ then
\begin{equation}
\label{eq:v_13ii}
v(x)=\int_{0}^\delta\sqrt{t}\phi(t)e^{\i(t+k)x_1-\sqrt{t^2+2kt}(x_2-H)}\d t.
\end{equation}
With $x=r(\cos\theta_*,\sin\theta_*)$,
\begin{equation}
\label{eq:rc_v_13ii}
\frac{\partial v}{\partial r}-\i k v
=e^{\i k x_1}\int_0^\delta \left[\i (t+k)\cos\theta_*-\sqrt{t^2+2kt}\sin\theta_*-\i k \right] \sqrt{t}\phi(t)e^{\i t x_1-\sqrt{t^2+2kt}(x_2-H)}\d t.
\end{equation}

Similar as in Section \ref{sec:v2}, the case that $x_2$ is larger is easiler. Thus for the estimation of \eqref{eq:v_13ii} and \eqref{eq:rc_v_13ii}, we go to the following theorem.

\begin{theorem}
\label{th:decay_v13ii1}
Suppose $x_2-H\geq\sqrt{r}$ and $\phi\in C^\infty[0,\delta)$ and vanishes at $\delta$. There is a constant $C>0$ only depends on $\phi$ such that
\begin{equation}
\label{eq:decay_v13ii1}
|v(x)|,\,\left|\frac{\partial v}{\partial r}-\i k v\right|\leq Cr^{-3/2}.
\end{equation}
\end{theorem}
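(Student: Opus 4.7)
The plan is to exploit the exponential decay factor $e^{-\sqrt{t^2+2kt}(x_2-H)}$, which is strong precisely because we are in the regime where $x_2-H$ is large (specifically $x_2 - H \geq \sqrt{r}$). Since we have the elementary bound $\sqrt{t^2+2kt} \geq \sqrt{2kt}$ on $[0,\delta]$, both integrands can be controlled by $\sqrt{t}\,e^{-\sqrt{2kt}(x_2-H)}$ times a factor that is uniformly bounded on $[0,\delta]\times(0,\pi)$.

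First I would estimate $|v(x)|$. Using $|\phi| \leq \|\phi\|_\infty$ and the above lower bound on the exponent,
\begin{equation*}
|v(x)| \leq \|\phi\|_\infty \int_0^\delta \sqrt{t}\, e^{-\sqrt{2kt}(x_2-H)} \d t.
\end{equation*}
The substitution $s := \sqrt{2kt}\,(x_2-H)$, i.e.\ $t = s^2 / (2k(x_2-H)^2)$ and $\d t = s/(k(x_2-H)^2)\d s$, turns this into
\begin{equation*}
\int_0^\delta \sqrt{t}\, e^{-\sqrt{2kt}(x_2-H)} \d t \;=\; \frac{1}{\sqrt{2}\,k^{3/2}(x_2-H)^3} \int_0^{\sqrt{2k\delta}(x_2-H)} s^2 e^{-s}\d s \;\leq\; \frac{C}{(x_2-H)^3}.
\end{equation*}
Since the hypothesis gives $(x_2-H)^3 \geq r^{3/2}$, this yields $|v(x)| \leq C r^{-3/2}$.

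For the radiation quantity, I would bound the prefactor inside the integral in \eqref{eq:rc_v_13ii} uniformly in $\theta_*\in(0,\pi)$ and $t\in[0,\delta]$ by
\begin{equation*}
\bigl|\i(t+k)\cos\theta_* - \sqrt{t^2+2kt}\sin\theta_* - \i k\bigr| \;\leq\; t + k|1-\cos\theta_*| + \sqrt{t^2+2kt} \;\leq\; C,
\end{equation*}
where $C$ depends only on $k$ and $\delta$. Hence the very same Gamma-type integral appears, and the same substitution yields $|\partial_r v - \i k v| \leq C(x_2-H)^{-3} \leq C r^{-3/2}$.

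No serious obstacle is expected here; the estimate is essentially a direct computation, and the only subtlety is to verify that the prefactor in the radiation integrand stays uniformly bounded so that it can simply be absorbed into the constant. The assumption $x_2 - H \geq \sqrt{r}$ is exactly what converts the $(x_2-H)^{-3}$ coming out of the substitution into the desired $r^{-3/2}$ rate; the complementary regime $x_2-H < \sqrt{r}$ will have to be treated separately in the sequel by oscillatory-integral methods (analogous to Theorem \ref{th:decay_rc_v122}).
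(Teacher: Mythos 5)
Your proof is correct and uses essentially the same idea as the paper: bound $\sqrt{t^2+2kt}\geq\sqrt{2kt}$, exploit the exponential factor and the hypothesis $x_2-H\geq\sqrt{r}$, and observe that the extra prefactor in the radiation integrand is uniformly bounded. The paper applies the hypothesis inside the exponent first and evaluates the resulting $r$-dependent integral via an explicit antiderivative, while you keep $(x_2-H)$ through a substitution to a $\Gamma(3)$-type integral and convert at the end — a cosmetic difference in the same computation.
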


\begin{proof}
Since $x_2-H\geq \sqrt{r}$ and $\sqrt{t^2+2kt}\geq\sqrt{2kt}$, we have
\[
\exp\left(-\sqrt{t^2+2kt}(x_2-H)\right)\leq \exp\left(-\sqrt{2kt r}\right).
\]
From direct computation
\begin{align*}
\left|v(x)\right|&\leq C\int_0^\delta\sqrt{t}e^{-\sqrt{2krt}}\d t=\frac{4}{(2kr)^{3/2}}
\left[1-\left(kr\delta+\sqrt{2kr\delta}+1\right)e^{-\sqrt{2kr\delta}}\right]
\leq Cr^{-3/2}.
\end{align*}
Here we use the formula
\[
\int \sqrt{t}e^{-\gamma\sqrt{t}}\d t=-\frac{2(\gamma^2 t+2\gamma\sqrt{t}+2)}{\gamma^3}e^{-\gamma\sqrt{t}}+C.
\]
This result holds also for $\frac{\partial v}{\partial r}-\i k v$, thus the proof is finished.
\end{proof}

In the following theorem, we will deal with the more challenging part, i.e., $x_2-H<\sqrt{r}$.

\begin{theorem}
\label{th:decay_v13ii2}
Suppose $x_2-H<\sqrt{r}$ and $\phi\in C^\infty[0,\delta)$ and vanishes at $\delta$. Then there is a constant $C>0$ such that
\begin{equation}
\label{eq:decay_v13ii2}
|v(x)|,\,\left|\frac{\partial v}{\partial r}-\i k v\right|\leq Cr^{-3/2}.
\end{equation}
\end{theorem}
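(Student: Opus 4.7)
The plan is to show both $|v(x)|$ and $|\partial_r v-\i k v|$ are $O(|x_1|^{-3/2})$, which is enough because the hypothesis $(x_2-H)<\sqrt{r}$ together with $r^2=x_1^2+(x_2-H)^2$ forces $|x_1|\geq r/\sqrt{2}$ once $r\geq 2$. Both quantities have the common structure
\[
e^{\i k x_1}\int_0^\delta\sqrt{t}\,A(t)\,e^{\i t x_1-\sqrt{t^2+2kt}(x_2-H)}\d t,
\]
where $A\in C^\infty[0,\delta)$ vanishes at $\delta$. For the radiation-condition integrand one first checks that the prefactor $\i(t+k)\cos\theta_*-\sqrt{t^2+2kt}\sin\theta_*-\i k$ is bounded uniformly in $\theta_*\in(0,\pi)$ and $t\in[0,\delta]$, so it can be absorbed into $A$. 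Thus it suffices to prove the $|x_1|^{-3/2}$ bound for the generic integral above, uniformly in $(x_2-H)\in[h,\sqrt{r}]$.

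My first step would be to kill the $\sqrt{t}$ singularity by the substitution $t=s^2$, which yields
\[
2\int_0^{\sqrt{\delta}} s^2 A(s^2)\,e^{\Phi(s)}\d s,\qquad \Phi(s):=\i s^2 x_1-s\sqrt{s^2+2k}(x_2-H),
\]
with a $C^\infty$ integrand whose amplitude vanishes at both endpoints. The phase derivative $\Phi'(s)=2\i s x_1-(2s^2+2k)(x_2-H)/\sqrt{s^2+2k}$ satisfies $\Phi'(0)=-\sqrt{2k}(x_2-H)\neq 0$ and the uniform lower bound $|\Phi'(s)|\geq c\max(s|x_1|,(x_2-H))$ on $[0,\sqrt{\delta}]$, so $\Phi'$ never vanishes. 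Integrating by parts twice, using the vanishing of $s^2A(s^2)$ at both endpoints, gives a representation whose integrand is controlled by $|\Phi'|^{-2}$ times derivatives of $A(s^2)$ and of $e^{-s\sqrt{s^2+2k}(x_2-H)}$. The latter bring in at most two factors of $(x_2-H)$, and the hypothesis $(x_2-H)\leq\sqrt{r}\leq\sqrt{2}\,|x_1|^{1/2}$ bounds those by $|x_1|^{1/2}$. Combined with the $|x_1|^{-2}$ gained from $|\Phi'|^{-2}$, this produces the target $|x_1|^{-3/2}$.

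The main technical obstacle is the borderline regime where $(x_2-H)$ is comparable to $\sqrt{|x_1|}$, since then neither the Laplace decay $e^{-\sqrt{2k}s(x_2-H)}$ nor the oscillation $e^{\i s^2 x_1}$ alone is strong enough and naive IBP loses uniformity. The cleanest remedy is the rescaling $s=\sigma/\sqrt{|x_1|}$, which turns the integral into
\[
|x_1|^{-3/2}\!\int_0^{\sqrt{\delta|x_1|}}\!\sigma^2 A(\sigma^2/|x_1|)\exp\!\Bigl(\i\sigma^2\mathrm{sgn}(x_1)-\sigma\sqrt{\sigma^2/|x_1|+2k}\,\gamma\Bigr)\d\sigma,
\]
where $\gamma:=(x_2-H)/\sqrt{|x_1|}$ is bounded by a universal constant. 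I would then split: if $\gamma$ is bounded below, the factor $e^{-\sqrt{2k}\gamma\sigma}$ makes the rescaled integrand absolutely integrable with a bound independent of $|x_1|$; if $\gamma$ is small, a single IBP in $\sigma$ on the Fresnel-type integral $\int_0^\infty \sigma^2 e^{\i\sigma^2}\chi(\sigma)\d\sigma$ works because the amplitude vanishes to order two at $\sigma=0$, and the small perturbation due to $\gamma\neq 0$ only improves the estimate. Putting the two regimes together delivers the uniform $|x_1|^{-3/2}$ bound, and the same scheme, applied to the integrand of $\partial_r v-\i k v$ after absorbing its bounded prefactor, finishes the proof.
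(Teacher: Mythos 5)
Your route is genuinely different from the paper's. The paper keeps $t$ as the integration variable, splits the prefactor $\i(t+k)\cos\theta_*-\sqrt{t^2+2kt}\sin\theta_*-\i k$ into a bounded part and the piece producing $t\sqrt{t+2k}$ (so that each density, $\sqrt t\,\psi_1(t)$ or $t\,\psi_2(t)$, has a $C^\infty$ coefficient), and then iterates integration by parts in $x_1$ while carefully tracking the powers of $\xi=x_2-H$ that come out of differentiating $e^{-\xi\sqrt{t^2+2kt}}$ and splitting near $t=0$ to control the resulting $t^{-1/2}$ and $t^{-3/2}$ singularities. You instead substitute $t=s^2$ and then rescale $s=\sigma/\sqrt{|x_1|}$, which pulls out the factor $|x_1|^{-3/2}$ at once and converts the problem into a uniform $O(1)$ bound on a single rescaled integral; the only remaining parameter is $\gamma=(x_2-H)/\sqrt{|x_1|}\in(0,C]$. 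This is a cleaner dimensional-analysis argument, and plausibly more portable to the bi-periodic 3D setting the paper advertises.

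There are, however, two places where your sketch is not yet a proof. First, for the radiation-condition integrand the assertion that $A\in C^\infty[0,\delta)$ is false: the prefactor contains $\sqrt{t^2+2kt}=\sqrt{t}\,\sqrt{t+2k}$, which is only H\"older-$\tfrac12$ at $t=0$, so $A$ is not even $C^1$ in $t$. The substitution $t=s^2$ does restore smoothness in $s$ (since $\sqrt{t^2+2kt}\big|_{t=s^2}=s\sqrt{s^2+2k}$), but you should say so; as written the claim contradicts the very thing the substitution is there to repair. Second, and more substantively, in the small-$\gamma$ regime the phrase ``a single IBP works'' is not enough. After one integration by parts in $\sigma$ you are left with $\int_0^{T}\bigl(\chi(\sigma)+\sigma\chi'(\sigma)\bigr)e^{\i\,\mathrm{sgn}(x_1)\,\sigma^2}\d\sigma$ with $T=\sqrt{\delta|x_1|}\to\infty$; the amplitude is bounded, but boundedness alone does not yield an $O(1)$ estimate over a growing interval. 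One must verify that $\chi+\sigma\chi'$ has total variation bounded uniformly in $|x_1|$ — this does hold, because $\chi'(\sigma)=\tfrac{2\sigma}{|x_1|}\,\partial A(\sigma^2/|x_1|)$ integrates to $O(\delta)$ over $[0,\sqrt{\delta|x_1|}]$, and similarly for $\sigma\chi''$ — and then invoke Dirichlet's test against the uniformly bounded Fresnel primitives $\int_a^b e^{\i\sigma^2}\d\sigma$. (The abandoned ``IBP twice'' paragraph also contains a quantitative slip: $|\Phi'|^{-2}$ is not comparable to $|x_1|^{-2}$ uniformly in $s$ and in $(x_2-H)$, so that route would not close; you were right to discard it.) With these two points supplied, the rescaling argument should go through.
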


\begin{proof}

When $x_2-H<\sqrt{r}$, then $|x_1|\geq r/2$, we need to split the integral into two parts:
\begin{align*}
\frac{\partial v}{\partial r}-\i k v
&=e^{\i k x_1}\int_0^\delta \sqrt{t}\psi_1(t)e^{\i t x_1-\sqrt{t^2+2kt}(x_2-H)}\d t+e^{\i k x_1}\int_0^\delta {t}\psi_2(t)e^{\i t x_1-\sqrt{t^2+2kt}(x_2-H)}\d t\\&:=e^{\i k x_1}(I)+e^{\i k x_1}(II),
\end{align*}
where $\psi_1$ and $\psi_2$ only depend on $\theta_*$ and $k$. Both functions are $C^\infty[0,\delta)$ and vanishes around $\delta$. Note that $v$ defined by \eqref{eq:v_13ii} has exactly the same form as $(I)$, thus the discussion is included in the proof of $\frac{\partial v}{\partial r}-\i k v$.

For simplicity, let $\xi:=x_2-H$ thus $h<\xi<\sqrt{r}$. We begin with part $(I)$. From integration by parts,
\begin{align*}
(I)&=\frac{1}{\i x_1}\int_0^\delta \sqrt{t}\psi_1(t) e^{-\xi\sqrt{t^2+2kt}}\d e^{\i t x_1}\\
&:=\frac{1}{\i x_1}\int_0^\delta \left[\frac{1}{\sqrt{t}}\zeta_1(t)+\xi\zeta_2(t)\right]e^{\i t x_1-\xi\sqrt{t^2+2kt}} \d t\\&:=\frac{1}{\i x_1}(I_1)+\frac{\xi}{\i x_1}(I_2)
\end{align*}
where
\begin{align*}
\zeta_1(t)=-\frac{1}{2}\psi_1(t)-t\psi'_1(t),\quad
\zeta_2(t)=\frac{t+k}{\sqrt{t+2k}}\psi_1(t)\quad\in C^\infty[0,\delta)
\end{align*}
and both functions vanish around $\delta$. For $(I_1)$, we first split it into two parts due to the singularity:
\begin{align*}
(I_1)=\int_0^{1/r}+\int_{1/r}^\delta \frac{1}{\sqrt{t}}\psi_1(t)e^{\i t x_1-\xi\sqrt{t^2+2kt}} \d t:=(I_{11})+(I_{12}).
\end{align*}
For the term $(I_{11})$, from direct computation,
\[
\left|(I_{11})\right|\leq C\int_0^{1/r}\frac{1}{\sqrt{t}}\d t=Cr^{-1/2}.
\]
For the term $(I_{12})$, with integration by parts again,
\begin{align*}
(I_{12})=&\frac{1}{\i x_1}\int_{1/r}^\delta \frac{1}{\sqrt{t}}\psi_1(t)e^{-\xi\sqrt{t^2+2kt}} \d e^{\i t x_1}\\=&-\frac{\sqrt{r}}{\i x_1}\psi_1(0)+\frac{1}{\i x_1}\int_{1/r}^\delta\left[\frac{1}{2\sqrt{t}^3}\psi_1(t)-\frac{1}{\sqrt{t}}\psi'_1(t)+\frac{\xi(t+k)}{\sqrt{t}\sqrt{t^2+2kt}}\psi_1(t)\right]e^{\i t x_1-\xi\sqrt{t^2+2kt}}\d t.
\end{align*}
Consider the later integral separately. First estimate
\begin{align*}
\left|\int_{1/r}^\delta\left[-\frac{1}{2\sqrt{t}^3}\psi_1(t)+\frac{1}{\sqrt{t}}\psi'_1(t)\right]e^{\i t x_1-\xi\sqrt{t^2+2kt}}\d t\right|
\leq C\int_{1/r}^\delta t^{-3/2}\d t\leq C \sqrt{r}.
\end{align*}
Since $\sqrt{t^2+2kt}\geq \sqrt{2kt}$, with $t\geq 1/r$,
\begin{align*}
\left|\int_{1/r}^\delta\frac{\xi(t+k)}{\sqrt{t}\sqrt{t^2+2kt}}\psi_1(t)e^{\i t x_1-\xi\sqrt{t^2+2kt}}\d t\right|
\leq C\sqrt{r}\int_{1/r}^\delta \frac{\xi}{\sqrt{2kt}}e^{-\xi\sqrt{2kt}}\d t\leq C\sqrt{r}
\end{align*}
where $C$ does not depend on the parameter $c>0$. Here we use the fact that
\[
\int \frac{1}{\sqrt{\lambda t}}e^{-\xi\sqrt{\lambda t}}\d t=-\frac{2 e^{-\xi\sqrt{\lambda t}}}{\lambda\xi}+C.
\]
Thus there is a constant $C>0$ such that
\[
\left|(I_{12})\right|\leq Cr^{-1/2}\quad\Rightarrow\quad |(I_1)|\leq Cr^{-1/2}.
\]

Now we move on to $(I_2)$. With integration by parts,
\begin{align*}
(I_2)=&\frac{1}{\i x_1}\int_0^{\delta} \psi_2(t) e^{-\xi\sqrt{t^2+2kt}}\d e^{\i t x_1}\\=&-\frac{1}{\i x_1}\psi_2(0)-\frac{1}{\i x_1}\int_0^\delta \left(\psi'_2(t)-\frac{\xi(t+k)}{\sqrt{t^2+2kt}}\psi_2(t)\right)e^{\i t x_1-\xi\sqrt{t^2+2kt}}\d t.
\end{align*}
With similar arguments,
\begin{align*}
\left|\int_0^\delta \left(\psi'_2(t)-\frac{\xi(t+k)}{\sqrt{t^2+2kt}}\psi_2(t)\right)e^{\i t x_1-c\sqrt{t^2+2kt}}\d t\right|\leq C+C\int_0^\delta \frac{\xi}{\sqrt{2kt}}e^{-\xi\sqrt{2kt}}\d t\leq C,
\end{align*}
we finally have
\[
|(I_2)|\leq C\frac{1}{|x_1|}\quad\Rightarrow\quad |(I)|\leq Cr^{-3/2}
\]
since $|\xi|\leq\sqrt{r}$.

At the end we move on to $(II)$. From integration by parts,
\begin{align*}
(II)&=\frac{1}{\i x_1}\int_0^\delta {t}\psi_2(t)e^{-\xi\sqrt{t^2+2kt}}\d e^{\i t x_1}\\
&:=\frac{1}{\i x_1}\int_0^\delta \left[\zeta_3(t)+\xi\sqrt{t}\zeta_4(t)\right]e^{\i t x_1-\xi\sqrt{t^2+2kt}}\d t\\
&:=\frac{1}{\i x_1}(II_1)+\frac{\xi}{\i x_1}(II_2),
\end{align*}
where
\begin{align*}
\zeta_3(t)=-\psi_2(t)-t\psi'_2(t),\quad \zeta_4(t)=\frac{t+k}{\sqrt{t+2k}}\psi_2(t)\quad\in C^\infty[0,\delta)
\end{align*}
which vanish around $\delta$. For the term $(II_1)$, since it has exactly the same form of $(I_2)$, we can easily estimate that
\[
|(II_1)|\leq Cr^{-1}.
\]
For $(II_2)$, with integration by parts and the inequality $\sqrt{k^2+2kt}\geq\sqrt{2kt}$,
\begin{align*}
|(II_2)|&=\left|\frac{1}{\i x_1}\int_0^\delta \sqrt{t}\zeta_4(t)e^{-\xi\sqrt{t^2+2kt}}\d e^{\i t x_1}\right|\\
&=\left|\frac{1}{\i x_1}\int_0^\delta\left[\frac{1}{2\sqrt{t}}\zeta_4(t)+\sqrt{t}\zeta'_4(t)-\frac{\xi(t+k)}{\sqrt{t+2k}}\right]e^{\i t x_1-\xi\sqrt{t^2+2kt}}\d t\right|\\
&\leq Cr^{-1}\int_0^\delta \frac{1}{\sqrt{t}}\d t+Cr^{-1}\int_0^\delta \frac{\xi}{\sqrt{2kt}} e^{-\xi\sqrt{2kt}}\d t\leq Cr^{-1}
\end{align*}
Thus finally we have the esstimation that
\begin{align*}
|(II)|\leq Cr^{-2}.
\end{align*}
Combine the results for $(I)$, the proof is finished.

\end{proof}

\section{Radiation condition for $u_2$}
\label{sec:rc_2}

 In this section, we will consider $v_2$ be defined by \eqref{eq:def_v} where $u$ be given by \eqref{eq:def_u2}. Following the arguments at the beginning of Section \ref{sec:rc_1}, we only need to consider the radiation conditions for functions defined
\begin{equation}
\label{eq:def_v2}
v(x):=\int_{\alpha_0-\delta}^{\alpha_0+\delta}|\alpha-\alpha_0|\phi(\alpha) e^{\i(\alpha+j)x_1+\i\sqrt{k^2-(\alpha+j)^2}(x_2-H)}\d\alpha,
\end{equation}
where  $\phi\in C_0^\infty(\alpha_0-\delta,\alpha_0+\delta)$. Similar to Section \ref{sec:rc_1}, we will consider different $j$'s in the following three cases.. 
\begin{itemize}
\item In Section \ref{sec:v1_2}, we consider Case I: $|\alpha_0+j|< k$. These $j$'s define a set $J_-$.
\item In Section \ref{sec:v2_2}, we consider Case II: $|\alpha_0+j|> k$.. These $j$'s define a set $J_+$.
\item In Section \ref{sec:v3_2}, we consider Case III: $|\alpha_0+j|=k$. 
\end{itemize}
Since the formulas in this section is very similar to those in Section \ref{sec:rc_1}, we will only focus on the differences.

\subsection{Case I: $|\alpha_0+j|<k$}
\label{sec:v1_2}

Similar to that in Section \ref{sec:v1}, we need to estimate the decay of the function
\begin{equation}
\label{eq:v1_2}
v(x)=\int_{\beta_1}^{\beta_2}|\theta-\theta_0|\psi(\theta)e^{\i k r \cos(\theta-\theta_*)}\d\theta
\end{equation}
where 
\[
\psi(\theta)=-k^2\sin\theta\phi(k\cos\theta-j)\left|\frac{\cos\theta-\cos\theta_0}{\theta-\theta_0}\right|\in C_0^\infty(\beta_1,\beta_2).
\]
The decay of the function is described in the following theorem.

\begin{theorem}
\label{th:v1_2_decay}
Given $\psi\in C_0^\infty(\beta_1,\beta_2)$, there is a constant $C>0$ that only depends on $\psi$ such that
\begin{equation}
\label{eq:v1_2_decay}
|v(x)|\leq Cr^{-1/2}
\end{equation}
holds uniformly  for all $x\in U_{H+h}$.
\end{theorem}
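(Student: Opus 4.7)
The plan is to mimic the proof of Theorem~\ref{th:v_11_decay} with the weight $|\theta-\theta_0|$ replacing $\sqrt{\theta-\theta_0}$. Since $|\theta-\theta_0|$ is Lipschitz---strictly more regular than $\sqrt{\theta-\theta_0}$---the same reduction will land on an oscillatory integral of a type already treated in the paper, and no new machinery should be needed.

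First, I will apply the half-angle identity $\cos(\theta-\theta_*)=1-2\sin^2((\theta-\theta_*)/2)$ and substitute $t=\sin((\theta-\theta_*)/2)$, $a=\sin((\theta_0-\theta_*)/2)$. As in Theorem~\ref{th:v_11_decay}, the constraints $\theta\in[\beta_1,\beta_2]\subset(\delta_0,\pi-\delta_0)$ and $\theta_*\in(0,\pi)$ make this a real-analytic bijection onto an interval $[\gamma_1,\gamma_2]\subset(-1,1)$. Since $2\arcsin t-2\arcsin a$ has the same sign as $t-a$, we may write $|\theta-\theta_0|=|t-a|\,q(t,a)$ with $q$ smooth and strictly positive, recasting the integral as
\[
v(x)=e^{\i k r}\int_{\gamma_1}^{\gamma_2}|t-a|\,\zeta(t)\,e^{-2\i k r t^2}\,\d t,
\]
where $\zeta\in C_0^\infty(\gamma_1,\gamma_2)$ and its $C^2$-norm is bounded uniformly in $\theta_*$, since $\psi$ is compactly supported in $(\beta_1,\beta_2)$.

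Next, I would split the integral at $t=a$ into
\[
\int_{\gamma_1}^{a}(a-t)\zeta(t)e^{-2\i k r t^2}\,\d t+\int_{a}^{\gamma_2}(t-a)\zeta(t)e^{-2\i k r t^2}\,\d t,
\]
so that on each half the amplitude is smooth and compactly supported. Each piece is then a Fresnel-type oscillatory integral with quadratic phase, whose decay is governed by the unique critical point $t=0$: if $0\in[\gamma_1,\gamma_2]$ the stationary-phase contribution is of order $r^{-1/2}$, and if $0\notin[\gamma_1,\gamma_2]$ a single integration by parts gives $O(r^{-1})$ or better. Since $|t-a|$ is at least as regular as $\sqrt{t-a}$, each piece is bounded exactly by the $I_2^C(r)$-type estimate invoked in the proof of Theorem~\ref{th:v_11_decay}, yielding $|v(x)|\leq Cr^{-1/2}$.

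The only real obstacle is ensuring that the constants are uniform in $\theta_*\in(0,\pi)$, equivalently in $a\in[-1,1]$ and in the position of $0$ relative to $[\gamma_1,\gamma_2]$. This will follow because $\zeta$ and its derivatives are uniformly bounded (by compactness of $\operatorname{supp}\psi$ and smoothness of the change of variables), and because the Fresnel-type estimates in Section~\ref{sec:fourier} were already established uniformly in $a$ during the proof of Theorem~\ref{th:v_11_decay}. Consequently this theorem should follow from the existing integral lemmas with only cosmetic modifications.
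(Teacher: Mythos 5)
Your change of variables and the reduction to an oscillatory integral with amplitude $|t-a|\zeta(t)$ are exactly what the paper does. The gap is in how you close the estimate: you assert that because $|t-a|$ is ``at least as regular as $\sqrt{t-a}$,'' each half of the split integral ``is bounded exactly by the $I_2^C(r)$-type estimate.'' This does not follow. Corollary~\ref{cr:decay_intc2} treats the weight $\sqrt{t-a}$, whose half-power singularity drives a very specific cancellation structure; it is not a monotone statement in regularity, and there is no general principle that replacing a less regular weight by a more regular one preserves the $O(r^{-1/2})$ bound or lets you reuse the same lemma. The paper in fact sets up a \emph{separate} lemma, Theorem~\ref{th:intd1} for $I^D_1(r)=\int|x-a|\phi(x)e^{\i r x^2}\d x$, precisely because the $|x-a|$ weight needs its own treatment, and the proof of Theorem~\ref{th:v1_2_decay} is a one-line invocation of it.

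Your sketch of the remaining argument (split at $t=a$, use the vanishing of the linear factor at $a$ and the compact support of $\psi$ at the other endpoint, then Fresnel/stationary phase at $t=0$) is essentially the content of the proof of Theorem~\ref{th:intd1}, but you leave out the nontrivial parts: after splitting at $t=a$ you must split a second time near the stationary point $t=0$, because the substitution $e^{\i r t^2}\d t=\tfrac{1}{2\i r t}\d e^{\i r t^2}$ is singular at $t=0$ and the amplitude does not vanish there; and you must verify uniformity as $a\to 0$ (critical point and kink collide) and as $a$ approaches the endpoints $\gamma_1,\gamma_2$. The paper handles this by expanding $(a-t)=a-t$ and reducing to four elementary Fresnel-type integrals with explicit bounds. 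So: right route, but the step you delegate to $I_2^C$ is actually the crux and needs either a fresh estimate (i.e., Theorem~\ref{th:intd1}) or a worked-out stationary-phase argument with the two-stage split; it cannot be inherited from the square-root case by a regularity heuristic.
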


\begin{proof}
Use the formula $\cos(\theta-\theta_*)=1-2\sin^2((\theta-\theta_*)/2)$ and let $t:=\sin((\theta-\theta_*)/2)\in[\gamma_1,\gamma_2]\subset(-1,1)$, then $\theta=2\arcsin t+\theta_*$ and
\[
v(x)=e^{\i k r}\int_{\gamma_1}^{\gamma_2}|t-a|\zeta(t)e^{-2\i kr t^2}\d t
\]
where $a=\sin((\theta_0-\theta_*)/2)$ and
\begin{align*}
\zeta(t)&=\frac{2}{\sqrt{1-t^2}}\left|\frac{2\arcsin t-2\arcsin a}{t-a}\right|\psi(2\arcsin t+\theta_*)\\&=\frac{2}{\sqrt{1-t^2}}\frac{2\arcsin t-2\arcsin 2}{t-a}\psi(2\arcsin t+\theta_*)\in C_0^\infty(\gamma_1,\gamma_2).
\end{align*}
Since the formula has exactly the same form as $I^D_1(r)$ defined by \eqref{eq:intd1}, from Theorem \ref{th:intd1} we have
\[
\left|I^D_1(r)\right|\leq Cr^{-1/2},
\]
which finishes the proof of this theorem. 
\end{proof}

Now we move on to the radiation condition of $v$, which is given by
\begin{equation}
\label{eq:v1_2_rc}
\frac{\partial v}{\partial r}-\i k v=\i k\int_{\beta_1}^{\beta_2}|\theta-\theta_0|\left(\cos(\theta-\theta_*)-1\right)\psi(\theta)e^{\i k r \cos(\theta-\theta_*)}\d\theta.
\end{equation}

\begin{theorem}
\label{th:v1_2_rc}
Given $\psi\in C_0^\infty(\beta_1,\beta_2)$, there is a constant $C>0$ which only depends on $\psi$ such that
\begin{equation}
\label{eq:rc_v1_2}
\left|\frac{\partial v}{\partial r}-\i k v\right|\leq Cr^{-3/2}
\end{equation}
holds uniformly for $x\in U_{H+h}$.
\end{theorem}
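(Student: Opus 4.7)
The argument will parallel the proofs of Theorems \ref{th:rc_v111} and \ref{th:rc_v112} for the $\sqrt{\cdot}$-weighted analogue, split into the two cases $\theta_*\notin[\beta_1,\beta_2]$ and $\theta_*\in[\beta_1,\beta_2]$. The only structural change is that the weight $\sqrt{\theta-\theta_0}$ is replaced by the Lipschitz weight $|\theta-\theta_0|$, whose derivative is $\mathrm{sign}(\theta-\theta_0)$ together with a jump at $\theta_0$. This milder singularity will produce a $\mathrm{sign}$ term after each integration by parts, but that term is bounded and can be handled by splitting the integration interval at $\theta_0$.

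For $\theta_*\notin[\beta_1,\beta_2]$ the factor $\sin(\theta-\theta_*)$ is bounded away from zero. I will use the identity $(\cos(\theta-\theta_*)-1)/\sin(\theta-\theta_*)=-\tan((\theta-\theta_*)/2)$ and integrate by parts once to obtain
\begin{equation*}
\frac{\partial v}{\partial r}-\i k v=-\frac{1}{r}\int_{\beta_1}^{\beta_2}\frac{d}{d\theta}\bigl[|\theta-\theta_0|\tan((\theta-\theta_*)/2)\psi(\theta)\bigr]e^{\i k r\cos(\theta-\theta_*)}\,d\theta,
\end{equation*}
the boundary contributions vanishing since $\psi$ is compactly supported in $(\beta_1,\beta_2)$. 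After the injective substitution $t=\cos(\theta-\theta_*)$ the resulting integrals split into an $|t-b|$-type piece, handled by Theorem \ref{th:intd1}, and a $\mathrm{sign}(t-b)$-type piece which, after splitting at $t=b$, is controlled by Theorem \ref{th:decay_inta1}. Each inner integral is $O(r^{-1/2})$, and combined with the prefactor $1/r$ this gives $O(r^{-3/2})$.

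For $\theta_*\in[\beta_1,\beta_2]$ the phase has a genuine stationary point at $\theta=\theta_*$, so direct integration by parts fails. I will perform the same substitution used in the proof of Theorem \ref{th:rc_v112}: set $t=\sin((\theta-\theta_*)/2)$ and $a=\sin((\theta_0-\theta_*)/2)$, reducing the quantity of interest to
\begin{equation*}
\frac{\partial v}{\partial r}-\i k v=e^{\i k r}\int_{\gamma_1}^{\gamma_2}|t-a|\,t^2\,\zeta(t)\,e^{-2\i k r t^2}\,dt
\end{equation*}
for some $\zeta\in C_0^\infty(\gamma_1,\gamma_2)$. Writing $t\,e^{-2\i k r t^2}\,dt=-(4\i k r)^{-1}\,d(e^{-2\i k r t^2})$ and integrating by parts (boundary terms vanishing) yields
\begin{equation*}
\frac{e^{\i k r}}{4\i k r}\int_{\gamma_1}^{\gamma_2}\bigl[\mathrm{sign}(t-a)\,t\zeta(t)+|t-a|\bigl(\zeta(t)+t\zeta'(t)\bigr)\bigr]e^{-2\i k r t^2}\,dt.
\end{equation*}
The $|t-a|$-piece matches the integral $I_1^D$ of \eqref{eq:intd1} and is $O(r^{-1/2})$ by Theorem \ref{th:intd1}. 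The $\mathrm{sign}(t-a)\,t\zeta(t)$-piece, after splitting the interval at $t=a$, becomes two smooth oscillatory integrals with Gaussian phase of the type $I_1^C$ in \eqref{eq:intc1}, each controlled by $O(r^{-1/2})$ via Theorem \ref{th:intc1}. Combined with the prefactor $1/r$ the total bound is $O(r^{-3/2})$.

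The hardest point will be guaranteeing uniformity of the Case 2 estimates in $a$ as $a\to 0$, i.e., when the amplitude singularity $t=a$ merges with the stationary-phase point $t=0$ (geometrically $\theta_0\approx\theta_*$). This is precisely the delicate issue already resolved for the $\sqrt{\cdot}$-weight in Theorem \ref{th:rc_v112}, and it will again be handled by the uniform-in-parameter estimates packaged into the $I^C$- and $I^D$-family of auxiliary lemmas from Section \ref{sec:fourier}.
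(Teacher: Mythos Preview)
Your overall strategy is sound, but it differs from the paper's proof and contains two misfired lemma citations that you should correct.

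The paper does \emph{not} split into the two cases $\theta_*\in[\beta_1,\beta_2]$ versus $\theta_*\notin[\beta_1,\beta_2]$. It performs a single integration by parts in $\theta$ (using the $\tan((\theta-\theta_*)/2)$ identity, which is smooth on all of $(\beta_1,\beta_2)$ since $|\theta-\theta_*|<\pi-\delta_0$), obtaining one term with weight $|\theta-\theta_0|$ and one with weight $\mathrm{sgn}(\theta-\theta_0)$. It then applies the substitution $t=\sin((\theta-\theta_*)/2)$ to both pieces uniformly in $\theta_*$, landing directly on integrals of the form $I^D_1$ and $I^D_2$ (quadratic phase), estimated by Theorem~\ref{th:intd1} and Corollary~\ref{cr:decay_intd2}. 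This is cleaner than your two-case argument and sidesteps any worry about uniformity as $\theta_*$ crosses the boundary of $[\beta_1,\beta_2]$.

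Your Case~2 is essentially the paper's argument with the order of substitution and integration by parts swapped, but the reference you give for the $\mathrm{sgn}(t-a)\,t\zeta(t)$ piece is wrong. That integral is precisely of type $I^D_2$ (with $\phi(t)=t\zeta(t)$), handled by Corollary~\ref{cr:decay_intd2}. It is \emph{not} of type $I^C_1$: Theorem~\ref{th:intc1} concerns an integrand with a $1/\sqrt{t-a}$ singularity and only yields $|I^C_1|\le C|a|^{-1}r^{-1/2}$, which blows up as $a\to0$ --- exactly the non-uniformity you flagged as the hardest point. Invoking $I^C_1$ there would leave a genuine gap; invoking $I^D_2$ closes it.

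In your Case~1, after the substitution $t=\cos(\theta-\theta_*)$ the phase is \emph{linear} in $t$, so Theorem~\ref{th:intd1} (which is stated for the quadratic phase $e^{irx^2}$) does not apply. The linear-phase versions are in fact easier --- a single integration by parts on each half-interval gives $O(r^{-1})$ --- but you should either prove that directly or, better, drop Case~1 altogether and follow the paper's unified route.
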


\begin{proof}
Following Theorem \ref{th:rc_v111}, with integration by parts,
\begin{align*}
\frac{\partial v}{\partial r}-\i k v&=\frac{1}{r}\int_{\beta_1}^{\beta_2}|\theta-\theta_0|\tan\left(\frac{\theta-\theta_*}{2}\right)\psi(\theta)\d e^{\i k r \cos(\theta-\theta_*)}\\&=-\frac{1}{r}\int_{\beta_1}^{\beta_2}\left[|\theta-\theta_0|\tan\left(\frac{\theta-\theta_*}{2}\right)\psi(\theta)\right]'e^{\i k r \cos(\theta-\theta_*)}\d\theta\\
&:=\frac{1}{r}\int_{\beta_1}^{\beta_2}|\theta-\theta_0|\zeta_1(\theta)e^{\i k r\cos(\theta-\theta_*)}\d\theta +\frac{1}{ r}\int_{\beta_1}^{\beta_2}{\rm sgn}(\theta-\theta_0)\zeta_2(\theta)e^{\i k r\cos(\theta-\theta_*)}\d\theta\\
&:=\frac{1}{r}(I)+\frac{1}{r}(II),
\end{align*}
where
\begin{align*}
\zeta_1(\theta)&=-\tan\left(\frac{\theta-\theta_*}{2}\right)\psi'(\theta)-\frac{1}{\cos(\theta-\theta_*)+1}\psi(\theta);\\
\zeta_2(\theta)&=-\tan\left(\frac{\theta-\theta_*}{2}\right)\psi(\theta);
\end{align*}
and both functions belong to $C_0^\infty(\gamma_1,\gamma_2)$.

For both integrals, let $t:=\sin((\theta-\theta_*)/2)$ and $a:=\sin((\theta_0-\theta_*)/2)$, then
\begin{align*}
(I)&=e^{\i r t}\int_{\gamma_1}^{\gamma_2}|t-a|\rho_1(t)e^{-2\i k r t^2}\d t;\\
(II)&=e^{\i r t}\int_{\gamma_1}^{\gamma_2}{\rm sgn}(t-a)\rho_2(t)e^{-2\i k r t^2}\d t;
\end{align*}
where
\begin{align*}
\rho_1(t)&=\frac{2}{\sqrt{1-t^2}}\frac{2\arcsin t-2\arcsin a}{t-a}\zeta_1(2\arcsin t+\theta_*)\in C_0^\infty(\gamma_1,\gamma_2);\\
\rho_2(t)&=\frac{2}{\sqrt{1-t^2}}\zeta_2(2\arcsin t+\theta_*)\in C_0^\infty(\gamma_1,\gamma_2).
\end{align*}
 Since $(I)$ has the form of $I^D_1(r)$ defined by \eqref{eq:intd1} and $(II)$ has the form of $I^D_2(r)$ define by \eqref{eq:intd2}, from the results in Theorem \ref{th:intd1} and Corollary \ref{cr:decay_intd2}, 
\[
|(I)|,\,|(II)|\leq Cr^{-1/2}.
\]
The proof is then finished.

\end{proof}

\subsection{Case II: $|\alpha_0+j|>k$}
\label{sec:v2_2}

This section is very similar to Section \ref{sec:v2} thus the details are  omitted here. We only list the results as follows.

\begin{theorem}
\label{th:decay_rc_v221}Given $\phi\in C_0^\infty(\alpha_0-\delta,\alpha_0+\delta)$. 
Suppose $x_2-H>\frac{r}{2}$, then there are two  constants $C,\epsilon_0>0$ such that
\begin{equation}
\label{eq:decay_v221}
|v(x)|,\,\left|\frac{\partial v}{\partial r}-\i k v\right|\leq C e^{-\epsilon_0 |j|r}
\end{equation}
holds uniformly for all $j\in J_+(\alpha_0)$ and $x\in U_{H+h}$. 
\end{theorem}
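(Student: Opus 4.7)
The plan is to adapt the argument of Theorem \ref{th:decay_rc_v121} essentially verbatim, noting that the only structural change from Section \ref{sec:v2} is that the factor $\sqrt{\alpha-\alpha_0}$ in \eqref{eq:def_vj} is replaced by $|\alpha-\alpha_0|$ in \eqref{eq:def_v2}, and $|\alpha-\alpha_0|$ is in fact more regular than the square-root factor. So no new singular behaviour is introduced, and the exponential dominance will still close the estimate.

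First I would exploit the hypothesis $|\alpha_0+j|>k$ to rewrite $\i\sqrt{k^2-(\alpha+j)^2}=-\sqrt{(\alpha+j)^2-k^2}$ in the representation \eqref{eq:def_v2}, so that the oscillatory exponential in $x_2$ is replaced by a genuine decay factor $e^{-\sqrt{(\alpha+j)^2-k^2}(x_2-H)}$. Since $\alpha$ ranges over the compact set $[\alpha_0-\delta,\alpha_0+\delta]$ and $|j|\to\infty$ along $J_+(\alpha_0)$, there exists $\epsilon_0>0$ (independent of $j$ and $\alpha$) with $\sqrt{(\alpha+j)^2-k^2}\geq 2\epsilon_0|j|$ uniformly. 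Combining this with the assumption $x_2-H>r/2$ gives the uniform bound
\begin{equation*}
e^{-\sqrt{(\alpha+j)^2-k^2}(x_2-H)}\leq e^{-\epsilon_0|j|r}.
\end{equation*}

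Next, for $v(x)$ itself, I would simply estimate
\begin{equation*}
|v(x)|\leq \int_{\alpha_0-\delta}^{\alpha_0+\delta}|\alpha-\alpha_0|\,|\phi(\alpha)|\,e^{-\sqrt{(\alpha+j)^2-k^2}(x_2-H)}\d\alpha \leq C\,e^{-\epsilon_0|j|r},
\end{equation*}
where the constant $C$ depends only on $\|\phi\|_{L^\infty}$ and $\delta$, since $|\alpha-\alpha_0|\leq\delta$ on the support.

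For the radiation-condition quantity, differentiating as in \eqref{eq:rc_v12} produces a prefactor of the form $\i(\alpha+j)\cos\theta_*-\sqrt{(\alpha+j)^2-k^2}\sin\theta_*-\i k$, which grows at most linearly in $|j|$. I would absorb this polynomial growth into the exponential by slightly shrinking $\epsilon_0$ (for instance, replacing $\epsilon_0$ by $\epsilon_0/2$ in the final estimate and using the trivial fact that $|j|e^{-\epsilon_0|j|r/2}$ remains uniformly bounded on $r\geq h$, so the product is controlled by $C\,e^{-\epsilon_0|j|r/2}$). There is no genuine obstacle here: the exponential decay in $|j|r$ crushes any polynomial factor, and the substitution $\sqrt{\alpha-\alpha_0}\mapsto|\alpha-\alpha_0|$ only affects the uniform constant, not the exponential rate. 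Thus both $|v(x)|$ and $|\partial v/\partial r-\i k v|$ satisfy the stated bound.
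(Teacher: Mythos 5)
Your proposal is correct and follows the paper's own intended argument: Section \ref{sec:v2_2} explicitly omits the proof, stating it is analogous to Theorem \ref{th:decay_rc_v121}, and your computation is precisely that analogy carried out, including the observation that the growth $\sim|j|$ in the prefactor is absorbed by the exponential. Your remark that replacing $\sqrt{\alpha-\alpha_0}$ by $|\alpha-\alpha_0|$ only improves regularity and does not affect the exponential rate is exactly why the paper feels free to omit the details.
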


\begin{theorem}
\label{th:decay_rc_v222}Given $\phi\in C_0^\infty(\alpha_0-\delta,\alpha_0+\delta)$. 
Suppose $x_2-H\leq \frac{r}{2}$, then there are two  constants $C,\epsilon_0>0$ such that
\begin{equation}
\label{eq:decay_v222}
|v(x)|,\,\left|\frac{\partial v}{\partial r}-\i k v\right|\leq C e^{-\epsilon_0 |j|h}r^{-2}
\end{equation}
holds uniformly for all $j\in J_+$ and $x\in U_{H+h}$.
\end{theorem}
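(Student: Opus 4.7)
The plan is to mirror the argument of Theorem \ref{th:decay_rc_v122} from Section \ref{sec:v2}, exploiting the fact that $|\alpha-\alpha_0|$ is one half-order smoother than $\sqrt{\alpha-\alpha_0}$ (it is Lipschitz with a single corner rather than a half-power singularity). This additional regularity upgrades the oscillatory-integral decay rate from $|x_1|^{-3/2}$ to $|x_1|^{-2}$. Under the hypothesis $x_2-H\leq r/2$, the identity $r^2=x_1^2+(x_2-H)^2$ forces $|x_1|\geq\tfrac{\sqrt{3}}{2}r$, so it is enough to produce an $|x_1|^{-2}$ estimate.

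The key computation is to write
\[
v(x)=e^{\i j x_1}\int_{\alpha_0-\delta}^{\alpha_0+\delta}|\alpha-\alpha_0|\,g(\alpha)\,e^{\i\alpha x_1}\,\d\alpha,\qquad g(\alpha):=\phi(\alpha)e^{-\mu(\alpha)(x_2-H)},
\]
with $\mu(\alpha):=\sqrt{(\alpha+j)^2-k^2}$, split the integration interval at $\alpha_0$ so that $|\alpha-\alpha_0|$ becomes the $C^\infty$ factor $\pm(\alpha-\alpha_0)$ on each subinterval, and integrate by parts twice in $\alpha$ on each piece. At the outer endpoints $\alpha_0\pm\delta$ every boundary contribution vanishes because $\phi\in C_0^\infty(\alpha_0-\delta,\alpha_0+\delta)$; at the interior endpoint $\alpha_0$ the first integration by parts gives no contribution (since $|\alpha-\alpha_0|$ is continuous and vanishes there), while the second produces a boundary term of size $|g(\alpha_0)|/x_1^2$ from the sign jump of $\frac{\d}{\d\alpha}|\alpha-\alpha_0|$. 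The remaining interior integral is controlled by $\|g''\|_{L^\infty}/x_1^2$, yielding a bound of the form $|v(x)|\leq C\bigl(|g(\alpha_0)|+\|g'\|_{L^\infty}+\|g''\|_{L^\infty}\bigr)|x_1|^{-2}$.

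To extract the $|j|$-exponential decay, I would use the hypothesis that $\mu(\alpha)\geq 2\epsilon_0|j|$ uniformly on the support of $\phi$ for $j\in J_+(\alpha_0)$, together with $x_2-H\geq h$. The elementary trade-off
\[
(x_2-H)^n e^{-\mu(\alpha)(x_2-H)}\leq C_n\,\mu(\alpha)^{-n}\,e^{-\mu(\alpha)(x_2-H)/2}\leq C_n\,|j|^{-n}\,e^{-\epsilon_0|j|h}
\]
absorbs the polynomial-in-$(x_2-H)$ factors (up to order two) that arise when $g$ is differentiated, as well as the bounded derivatives $\mu'(\alpha)\sim\mathrm{sgn}(j)$. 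This gives $|v(x)|\leq Ce^{-\epsilon_0|j|h}r^{-2}$. For $\frac{\partial v}{\partial r}-\i k v$, the same machinery applies after observing that the extra prefactor $\i(\alpha+j)\cos\theta_*-\sqrt{(\alpha+j)^2-k^2}\sin\theta_*-\i k$ is smooth in $\alpha$ with at most linear growth in $|j|$, which is swallowed by the exponential upon slightly shrinking $\epsilon_0$.

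The main obstacle is not the integration by parts itself but the bookkeeping required to show that the apparently dangerous growth of the $g$-derivatives (involving $(x_2-H)^2$, which can be as large as $r^2/4$) is always compensated by a residual exponential $e^{-\mu(x_2-H)/2}$; without this observation one would only obtain a useless bound that scales like $r^2/x_1^2$. Once this trade-off is recognised, every estimate reduces to the same type as in Section \ref{sec:v2} with one extra factor of $|x_1|^{-1/2}$, which is exactly the gain afforded by the improved regularity of $|\alpha-\alpha_0|$ over $\sqrt{\alpha-\alpha_0}$.
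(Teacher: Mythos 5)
Your proposal is correct and follows the same strategy the paper intends: split the integral at the kink $\alpha=\alpha_0$, integrate by parts twice on each piece (which is precisely the content of Theorem~\ref{th:decay_inta3} with $m=1$), use $|x_1|\geq\frac{\sqrt{3}}{2}r$ from $x_2-H\leq r/2$, and absorb the $(x_2-H)^n$ growth into the exponential. The paper omits the proof entirely, saying only that Section~\ref{sec:v2_2} is ``very similar to Section~\ref{sec:v2}''; comparing with the proof of Theorem~\ref{th:decay_rc_v122}, the only stylistic difference is your trade-off $(x_2-H)^n e^{-\mu(x_2-H)}\leq C_n\mu^{-n}e^{-\mu(x_2-H)/2}$ followed by $e^{-\mu h/2}\leq e^{-\epsilon_0|j|h}$, which is a slightly cleaner and more uniform reformulation of the paper's argument (that $t\mapsto t^3 e^{-\epsilon_0|j|t}$ is decreasing on $[h,\infty)$ once $|j|$ is large). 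Both are valid; yours avoids the case split over small $|j|$.
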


\subsection{Case III: $|\alpha_0+j|=k$}
\label{sec:v3_2}

Similar to Section \ref{sec:v3}, we still assume that $\alpha_0+j=k$ for some $j\in\Z$, and consider the two cases, i.e., $\alpha\in(\alpha_0-\delta,\alpha_0]$ and $\alpha\in[\alpha_0,\alpha_0+\delta)$ separately.

When $\alpha\in(\alpha_0-\delta,\alpha_0]$, $\alpha+j\leq k$. In this case, similar to \eqref{eq:v_13i}
\begin{align}
v(x)&=\int_0^\beta|\cos\theta-1|\sin\theta \phi(k\cos\theta-j)e^{\i k r\cos(\theta-\theta_*)}\d\theta\nonumber\\
\label{eq:v_13i_2}
&=\int_0^\beta \sin^3\theta\psi(\theta)e^{\i k r \cos(\theta-\theta_*)}\d\theta 
\end{align}
where
\[
\psi(\theta)=2k^2\phi(k\cos\theta-j)\frac{\sin^2(\theta/2)}{\sin^2\theta}\in C^\infty[0,\beta)
\]
which vanishes around $\beta$. Note that the form is exactly as \eqref{eq:v_13i}, thus 
Theorem \ref{th:decay_v_13i} also holds. 

With similar technique, we can also get the form of the radiation condition
\begin{equation}
\label{eq:v_13i_2_rc}
\frac{\partial v}{\partial r}-\i k v=\i k\int_0^\beta \sin^2\theta \sin\theta\left(\cos(\theta-\theta_*)-1\right)\psi(\theta)e^{\i k r\cos(\theta-\theta_*)}\d\theta.
\end{equation}
which again has exactly the same form of \eqref{eq:v13i_rc}. Thus Theorem \ref{th:rc_v13i} can again be applied to get the estimation. We conclude as the results in the following corollary.

\begin{corollary}
\label{cr:decay_v_13i_2}
Let $\psi\in C^\infty[0,\beta)$ which vanishes round $\beta$. Then there is a constant $C>0$ which only depends on $\psi$ such that
\begin{equation}
\label{eq:decay_v_13i_2}
|v(x)|\leq Cr^{-1/2},\quad \left|\frac{\partial v}{\partial r}-\i k v\right|\leq Cr^{-3/2}
\end{equation}
holds  uniformly for $x\in U_{H+h}$
\end{corollary}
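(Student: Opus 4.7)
The plan is to reduce Corollary \ref{cr:decay_v_13i_2} to the two preceding results, Theorem \ref{th:decay_v_13i} and Theorem \ref{th:rc_v13i}, by absorbing the extra power of $\sin\theta$ that appears in the new integrands. A direct comparison shows that the representation \eqref{eq:v_13i_2} differs from \eqref{eq:v_13i} only by one extra factor of $\sin\theta$ inside the integral, and likewise \eqref{eq:v_13i_2_rc} differs from \eqref{eq:v13i_rc} only by the same factor. So the natural strategy is to repackage this factor into the smooth density and then invoke the earlier theorems verbatim.

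Concretely, I would set $\tilde\psi(\theta):=\sin\theta\,\psi(\theta)$. Since $\sin$ is entire and $\psi\in C^\infty[0,\beta)$ vanishes near $\beta$, the product $\tilde\psi$ also lies in $C^\infty[0,\beta)$ and vanishes in a neighbourhood of $\beta$. Moreover, every $C^N$ seminorm of $\tilde\psi$ on $[0,\beta]$ is controlled by the corresponding seminorm of $\psi$ (with a constant depending only on $\beta$ and $N$), so a constant depending on $\tilde\psi$ is automatically a constant depending only on $\psi$. With this notation, the integrand in \eqref{eq:v_13i_2} rewrites as $\sin^2\theta\,\tilde\psi(\theta)$, which is exactly the form treated in Theorem \ref{th:decay_v_13i}; applying that theorem to $\tilde\psi$ yields $|v(x)|\leq C r^{-1/2}$ uniformly on $U_{H+h}$.

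For the radiation-condition estimate, the integrand in \eqref{eq:v_13i_2_rc} rewrites as $\i k\,\sin^2\theta\bigl(\cos(\theta-\theta_*)-1\bigr)\tilde\psi(\theta)$, which matches the integrand of \eqref{eq:v13i_rc}, so Theorem \ref{th:rc_v13i} applied to $\tilde\psi$ gives $\bigl|\partial v/\partial r-\i k v\bigr|\leq C r^{-3/2}$. The only point I would want to double-check is that the case analysis $\theta_*\in(2\beta,\pi-2\beta)$, $[0,2\beta]$, and $[\pi-2\beta,\pi]$ inside the proofs of those two theorems never uses any structural property of the density beyond smoothness and vanishing near $\beta$, so that the substitution $\psi\mapsto\tilde\psi$ really is transparent; inspection of those proofs shows this is the case. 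Since there is no other step in the argument, I do not expect any genuine obstacle here — the content of the corollary is essentially a bookkeeping remark that the two extra factors $\sin\theta$ in the new integrands do not disturb the oscillatory-integral estimates already established.
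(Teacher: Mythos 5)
Your proof is correct and is essentially the paper's argument, made slightly more precise: the paper asserts that \eqref{eq:v_13i_2} and \eqref{eq:v_13i_2_rc} have ``exactly the same form'' as \eqref{eq:v_13i} and \eqref{eq:v13i_rc}, whereas they in fact carry one extra factor of $\sin\theta$, and your replacement $\psi\mapsto\tilde\psi=\sin\theta\,\psi$ (which preserves smoothness and the vanishing near $\beta$) is the clean way to justify that assertion. Your observation that the case analysis in Theorems \ref{th:decay_v_13i} and \ref{th:rc_v13i} uses nothing about the density beyond $C^\infty[0,\beta)$ and vanishing near $\beta$ is exactly the point that makes the reduction licit.
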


Then we move on to the case that $\alpha+j>k$ and $t:=\alpha+j-k$, $\xi:=x_2-H$ we have
\begin{equation}
\label{eq:v_13ii_2}
v(x)=\int_0^\delta t\psi(t)e^{\i(t+k)x_1-\xi\sqrt{t^2+2kt}}\d t.
\end{equation}
Then we get immediately
\begin{equation}
\label{eq:v_13ii_rc}
\frac{\partial v}{\partial r}-\i k v=\int_0^\delta \left[\i(t+k)\cos\theta_*-\sqrt{t^2+2kt}\sin\theta_*-\i k \right]t\psi(t)e^{\i t x_1-c\sqrt{t^2+2kt}}\d t.
\end{equation}

The case that $x_2-H$ is larger that $\sqrt{r}$ is easier. Following Theorem \ref{th:decay_v13ii1}, we have the following estimation.

\begin{corollary}
\label{cr:decay_v13ii1_2}
Suppose $x_2-H\geq\sqrt{r}$ and $\phi\in C^\infty[0,\delta)$ that vanishes around $\delta$. There is a $C>0$ depends on $\phi$ such that
\begin{equation}
\label{eq:decay_v13ii1_2}
|v(x)|,\,\left|\frac{\partial v}{\partial r}-\i k v\right|\leq Cr^{-2}
\end{equation}
holds uniformly for   $x\in U_{H+h}$.
\end{corollary}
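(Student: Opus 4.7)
The plan is to mirror the proof of Theorem \ref{th:decay_v13ii1} exactly, exploiting the fact that the density now carries a factor $t$ rather than $\sqrt{t}$, which gains an extra half-power of decay in the final exponential integral.

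First I would reduce to a purely real estimate by the triangle inequality. Since $\xi := x_2 - H \geq \sqrt{r}$ and $\sqrt{t^2+2kt} \geq \sqrt{2kt}$ for $t\in[0,\delta]$, we have the pointwise bound
\[
\bigl|e^{\i(t+k)x_1 - \xi\sqrt{t^2+2kt}}\bigr| \leq e^{-\sqrt{2kr}\,\sqrt{t}},
\]
so from \eqref{eq:v_13ii_2} and the boundedness of $\psi$,
\[
|v(x)| \leq C\int_0^\delta t\, e^{-\sqrt{2kr}\,\sqrt{t}}\d t.
\]

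Next I would compute this integral explicitly via the substitution $u = \sqrt{t}$, $\d t = 2u\d u$, yielding
\[
\int_0^\delta t\, e^{-\sqrt{2kr}\,\sqrt{t}}\d t = 2\int_0^{\sqrt{\delta}} u^3\, e^{-\sqrt{2kr}\,u}\d u \leq 2\int_0^{\infty} u^3\, e^{-\sqrt{2kr}\,u}\d u = \frac{12}{(2kr)^{2}},
\]
where the final equality is the standard Gamma-integral $\int_0^\infty u^3 e^{-\gamma u}\d u = 6/\gamma^4$. This yields $|v(x)| \leq C r^{-2}$ as required.

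For the radiation-condition quantity \eqref{eq:v_13ii_rc}, the only change compared with $v$ itself is the extra multiplier
\[
\i(t+k)\cos\theta_* - \sqrt{t^2+2kt}\sin\theta_* - \i k.
\]
On $t\in[0,\delta]$ with $|\theta_*|\leq \pi$ this multiplier is uniformly bounded (by $k + \delta + \sqrt{\delta^2 + 2k\delta} + k$), and in particular is independent of $r$. Thus the same computation gives
\[
\left|\frac{\partial v}{\partial r} - \i k v\right| \leq C\int_0^\delta t\, e^{-\sqrt{2kr}\,\sqrt{t}}\d t \leq Cr^{-2}.
\]
I do not anticipate any real obstacle here: the work has effectively been done in Theorem \ref{th:decay_v13ii1}, and the single additional power of $\sqrt{t}$ in the density converts $r^{-3/2}$ into $r^{-2}$ through the elementary Gamma-integral computation above.
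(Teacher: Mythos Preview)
Your proposal is correct and follows essentially the same approach as the paper: both bound the modulus of the integrand by $t\,e^{-\sqrt{2krt}}$ using $\xi\geq\sqrt{r}$ and $\sqrt{t^2+2kt}\geq\sqrt{2kt}$, then evaluate the resulting one-dimensional integral. The paper uses an explicit antiderivative formula for $\int t\,e^{-\sqrt{\lambda t}}\d t$, whereas you substitute $u=\sqrt{t}$ and invoke the Gamma integral $\int_0^\infty u^3 e^{-\gamma u}\d u=6/\gamma^4$; these are equivalent computations yielding the same $r^{-2}$ bound.
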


\begin{proof}
Since $\xi\geq\sqrt{r}$ and $\sqrt{t^2+2kt}\geq \sqrt{2kt}$, we have that
\[
|v(x)|\leq C\int_0^\delta t e^{-\sqrt{2krt}}\d t=\frac{3}{k^2r^2}-\frac{e^{-\sqrt{2kr\delta}}}{k^2 r^2}\left[(3+kr\delta)\sqrt{2kr\delta}+3kr\delta+3\right]\leq Cr^{-2}
\]
where we use the formula
\[
\int t e^{-\sqrt{\gamma t}}\d t=-\frac{2(\lambda t\sqrt{\lambda t}+3\lambda t+6\sqrt{\lambda t}+6)}{\lambda^2}e^{-\sqrt{\lambda t }}+C.
\]
The same result also holds for $\frac{\partial v}{\partial r}-\i k v$. The proof is finished.
\end{proof}

The last topic in this section is the case that $\xi<\sqrt{r}$. This will be concluded as the Corollary of Theorem \ref{th:decay_v13ii2}.

\begin{corollary}
\label{cr:decay_v13ii2_2}
Suppose $x_2-H<\sqrt{r}$ and $\phi\in C^\infty[0,\delta)$ that vanishes around $\delta$. There is a $C>0$ depends on $\phi$ such that
\begin{equation}
\label{eq:decay_v13ii2_2}
|v(x)|,\,\left|\frac{\partial v}{\partial r}-\i k v\right|\leq Cr^{-3/2}
\end{equation}
holds uniformly for  $x\in U_{H+h}$.
\end{corollary}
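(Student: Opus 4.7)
My plan is to observe that this corollary is essentially a corollary of the machinery already developed in Theorem \ref{th:decay_v13ii2}: the only change is that the amplitude of $v$ now carries a factor $t = \alpha+j-k$ instead of $\sqrt{t}$, and the less singular amplitude makes every estimate at least as good. The target rate $r^{-3/2}$ will in fact be loose.

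For the decay of $v$ itself, I would factor out $e^{\i k x_1}$ and write
\[
v(x) = e^{\i k x_1}\int_0^\delta t\psi(t)e^{\i t x_1 - \xi\sqrt{t^2+2kt}}\d t,
\]
which is exactly the $(II)$-type integral analyzed in the proof of Theorem \ref{th:decay_v13ii2}, with $\psi$ playing the role of $\psi_2$. One integration by parts via $\d e^{\i t x_1}/(\i x_1)$, combined with the uniform estimate $\int_0^\delta \xi(t+k)/\sqrt{t+2k}\,e^{-\xi\sqrt{2kt}}\d t \leq C$ that was already established there, gives $|(II)| \leq C r^{-2}$. Since $\xi < \sqrt{r}$ implies $|x_1| \geq r/2$, this yields $|v(x)| \leq C r^{-2} \leq C r^{-3/2}$.

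For the radiation condition, I would expand the prefactor in \eqref{eq:v_13ii_rc} as
\[
\i(t+k)\cos\theta_* - \sqrt{t^2+2kt}\sin\theta_* - \i k = \i t\cos\theta_* - \i k(1-\cos\theta_*) - \sqrt{t^2+2kt}\sin\theta_*,
\]
which produces three integrands, with amplitudes (up to smooth $\theta_*$-dependent constants) of the form $t\psi(t)$, $t^2\psi(t)$, and $t^{3/2}\sqrt{t+2k}\psi(t)$. The first two again fall under the $(II)$-analysis of Theorem \ref{th:decay_v13ii2} and give $O(r^{-2})$, since higher powers of $t$ only simplify the integration by parts. The main obstacle is the third term, whose amplitude behaves like $t^{3/2}$ near the origin — intermediate between the two cases of the preceding theorem.

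For this third term my plan is one integration by parts in $t$: the boundary contribution vanishes at both endpoints (at $t=0$ because of $t^{3/2}$, at $t=\delta$ because $\psi$ vanishes there), and the resulting integrand decomposes into a term whose amplitude now behaves like $t^{1/2}$ times a smooth factor, plus a term produced by differentiating the exponential which carries the factor $\xi(t+k)/\sqrt{t+2k}$. After this step we have $1/(\i x_1)$ times an integral of exactly the $(I)$-type from Theorem \ref{th:decay_v13ii2}; the argument there (splitting $\int_0^{1/r}+\int_{1/r}^\delta$, a further integration by parts on the second piece, and the $\xi$-exponential control cited above) bounds it by $C r^{-1/2}$. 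With the $1/|x_1|$ prefactor and $|x_1|\geq r/2$ this contributes $O(r^{-3/2})$. Summing the three terms finishes the proof; the hardest single step is the $t^{3/2}$ term, which is really just a bookkeeping exercise that mirrors the $(I)$-analysis of the preceding theorem.
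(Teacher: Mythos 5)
Your decomposition of the prefactor into the $t$, $t^2$, and $t^{3/2}$ amplitude pieces agrees with the paper, and the first two are handled by the $(II)$-analysis exactly as you say. The difference is the $t^{3/2}$ piece: you perform one extra integration by parts before citing Theorem \ref{th:decay_v13ii2}, whereas the paper simply rewrites $t^{3/2}\psi(t)=\sqrt{t}\cdot\bigl(t\psi(t)\bigr)$, observes that $t\psi\in C^\infty[0,\delta)$ still vanishes near $\delta$, and concludes the $t^{3/2}$ integral is already the $(I)$-form of that theorem with its $O(r^{-3/2})$ bound — no further manipulation needed. Your detour does work (and in fact buys the stronger rate $O(r^{-5/2})$ for this piece), but two bookkeeping slips are worth flagging. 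First, after your integration by parts the integrand is not ``exactly $(I)$-type'' — it splits into a $\sqrt{t}$-amplitude part (genuinely $(I)$-form) plus a $\xi$-part whose amplitude carries $\xi\,t\,(t+k)/\sqrt{t+2k}$, with an extra factor of $t$ coming from $t^{3/2}/\sqrt{t^2+2kt}=t/\sqrt{t+2k}$; that second part is $\xi$ times an $(II)$-form integral, not $(I)$-form. Second, the theorem's bound for the $(I)$-form integral is $O(r^{-3/2})$, not $O(r^{-1/2})$ as you state (the latter is the bound for its sub-integral $(I_1)$). Neither slip damages the conclusion once you use $1/|x_1|\leq 2/r$ and $\xi<\sqrt{r}$, but the paper's direct observation bypasses both.
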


\begin{proof}
Similar as in Theorem \ref{th:decay_v13ii2}, we only need to consider $\frac{\partial v}{\partial r}-\i k v$. From direct computation,
\begin{align*}
\frac{\partial v}{\partial r}-\i k v&=e^{\i k x_1}\int_0^\delta t\psi(t)e^{\i t x_1-c\sqrt{t^2+2kt}}\d t+e^{\i k x_1}\int_0^\delta t^{3/2}\psi(t)e^{\i t x_1-c\sqrt{t^2+2kt}}\d t\\
&:=e^{\i k x_1}\int_0^\delta \sqrt{t}\zeta(t)e^{\i t x_1-c\sqrt{t^2+2kt}}\d t+e^{\i k x_1}\int_0^\delta t\psi(t)e^{\i t x_1-c\sqrt{t^2+2kt}}\d t
\end{align*}
where $\zeta(t)=t\psi(t)\in C^\infty[0,\delta)$ and vanishes at $\delta$. These two integrals have exactly the same form as in the proof of Theorem \ref{th:decay_v13ii2}. Thus the results come immediately.
\end{proof}

\section{Proof for Part (B) of the Main Theorem}
\label{sec:proof_main}

Now we conclude the proof for Part (B) of the main theorem, i.e., Theorem \ref{th:SRC}. The results are concluded from Section \ref{sec:rc_0}, \ref{sec:rc_1} and \ref{sec:rc_2}. According to Theorem \ref{th:reg_sca}, the structures of the solutions are different depending on the wavenumber $k$, thus we will prove the two cases separately.

First we prove the result for {\bf Case I, when $2k\not\in\N$}. In this case, $\S=\{-\kappa,\kappa\}$ where $k=\kappa+j$ for a unique $\kappa\in(-1/2,1/2)\setminus\{0\}$ and $j\in\Z$,  thus $|\S|=2$. For simplicity, denote 
\[
\S:=\{\alpha_1,\alpha_2\},
\]
and $J_\ell^1:=J_\ell(\alpha_1)$ and $J_\ell^1:=J_\ell(\alpha_1)$ for $\ell=-,0,+$.

\begin{proof}[Proof for Case I]
We recall that from the inverse Floquet-Bloch transform and the regularity of $w$ given by \eqref{eq:sing_c1}, $u$ is given by \eqref{eq:u_sing_c1}
\[
u(x)=\int_\Lambda w_0(\alpha,x)\d\alpha+\int_{\alpha_1-\delta}^{\alpha_1+\delta}\sqrt{\alpha-\alpha_1} w_1(\alpha,x)\d\alpha+\int_{\alpha_2-\delta}^{\alpha_2+\delta}\sqrt{\alpha-\alpha_2} w_2(\alpha,x)\d\alpha:=u_0+u_1+u_2,
\]
where $u_0$ has the form of \eqref{eq:def_u0}, $u_1$ and $u_2$ are defined in the way of \eqref{eq:def_u1}. Then for $x_2\geq H+h$,
\begin{align*}
u(x)&=\int_{\Gamma^H}\frac{\partial\Phi(x,y)}{\partial y_2}u_0(y)\d s(y)+\int_{\Gamma^H}\frac{\partial\Phi(x,y)}{\partial y_2}u_1(y)\d s(y)+\int_{\Gamma^H}\frac{\partial\Phi(x,y)}{\partial y_2}u_2(y)\d s(y)\\
&:=v_0+v_1+v_2.
\end{align*}
Thus we only need to study the radiation conditions for $v_0$, $v_1$ and $v_2$. 

The radiation condtion for $v_0$ is already very clear, i.e., when $r\rightarrow \infty$,
\[
|v|\leq Cr^{-1/2},\quad \left|\frac{\partial v}{\partial r}-\i k v\right|\leq Cr^{-3/2}
\]
holds uniformly for $x\in U_{H+h}$.

 Thus we only need to consider that for $v_1$ (since $v_2$ is similar). With \eqref{eq:decom_u},
\begin{align*}
v_1(x)&=\sum_{j\in\Z}\int_{\alpha_1-\delta}^{\alpha_1+\delta}\sqrt{\alpha-\alpha_1}\,g_j(\alpha)e^{\i(\alpha+j)x_1+\i\sqrt{k^2-(\alpha+j)^2}(x_2-H)}\d\alpha\\
&:=\sum_{j\in\Z}v_1^{(j)}(x)\\
&=\sum_{j\in\in J_-^1}+\sum_{j\in J_0^1}+\sum_{j\in J_+^1}\int_{\alpha_1-\delta}^{\alpha_1+\delta}\sqrt{\alpha-\alpha_1}\,g_j(\alpha)e^{\i(\alpha+j)x_1+\i\sqrt{k^2-(\alpha+j)^2}(x_2-H)}\d\alpha,
\end{align*}
where $g_j$ is defined by
\[
g_j(\alpha)=\frac{1}{2\pi}\int_{\alpha_1-\delta}^{\alpha_1+\delta}\left[w(\alpha,x)\big|_{\Gamma^H_0}\right]e^{-\i(\alpha+j)x_1}\d x_1,
\]
thus it is easily checked that $\|g_j\|_{C^m[\alpha_1-\delta,\alpha_1+\delta]}$ is uniformly bounded for $j\in\Z$.

When $j\in J_-^1$, with Theorem \ref{th:v_11_decay}, \ref{th:rc_v111} and \ref{th:rc_v112}, there is a constant $C>0$ such that
\[
\left|v_1^{(j)}\right|\leq Cr^{-1/2},\quad \left|\frac{\partial v_1^{(j)}}{\partial r}-\i k v_1^{(j)}\right|\leq Cr^{-3/2}
\]
holds uniformly for $x\in U_{H+h}$.

When $j\in J_0^1$, with Theorem \ref{th:decay_v_13i} and \ref{th:rc_v13i}, there is a constant $C>0$ such that
\[
\left|v_1^{(j)}\right|\leq Cr^{-1/2},\quad \left|\frac{\partial v_1^{(j)}}{\partial r}-\i k v_1^{(j)}\right|\leq Cr^{-3/2}
\]
holds uniformly for $x\in U_{H+h}$.

When $j\in J_+^1$, with Theorem \ref{th:decay_rc_v121} and \ref{th:decay_rc_v122},  there are two constants $C,\epsilon_0>0$ such that
\[
\left|v_1^{(j)}\right|, \left|\frac{\partial v_1^{(j)}}{\partial r}-\i k v_1^{(j)}\right|\leq Ce^{-\epsilon_0|j| h}r^{-3/2}
\]
holds uniformly for $x\in U_{H+h}$.

Now we combine the above results for the three cases. Now we need to estimate 
\[
|v_1(x)|\leq\sum_{j\in\Z}\left|v_1^{(j)}\right|\leq C\sum_{j\in J_-^1}r^{-1/2}+C\sum_{j\in J_0^1}r^{-1/2}+C\sum_{j\in J_+^1}e^{-\epsilon_0|j| h}r^{-3/2}\leq Cr^{-1/2}
\]
since $|J^1_-|,\,|J^1_0|<\infty$ and the series
\[
\sum_{j\in J_+^1}e^{-\epsilon_0|j| h}<\infty.
\]
Similarly, we can also estimate that
\[
\left|\frac{\partial v_1}{\partial r}-\i k v_1\right|\leq Cr^{-3/2}.
\]
The estimation also holds for $v_2$. Then the proof is finished.
\end{proof}

Now we move on to prove the result for {\bf Case II, when $2k\in\N$}. In this case, $\S=\{0\}$ when $k\in\N$, and $\S=\{1/2\}$ when $k-1/2\in\N$. Thus $|\S|=1$. For simplicity, denote  $\S=\{\alpha_0\}$, and $J_\ell=J_\ell(\alpha_0)$ for $\ell=-,0,+$.

\begin{proof}[Proof for Case II]
We recall that from the inverse Floquet-Bloch transform and the regularity of $w$ given by \eqref{eq:sing_c2}, $u$ is given by \eqref{eq:u_sing_c2}
\[
u(x)=\int_\Lambda w_0(\alpha,x)\d\alpha+\int_{\alpha_1-\delta}^{\alpha_1+\delta}\sqrt{\alpha-\alpha_0} w_1(\alpha,x)\d\alpha+\int_{\alpha_2-\delta}^{\alpha_2+\delta}|\alpha-\alpha_0|w_2(\alpha,x)\d\alpha:=u_0+u_1+u_2,
\]
where $u_0$ has the form of \eqref{eq:def_u0}, $u_1$ and $u_2$ are defined in the way of \eqref{eq:def_u1}. Then for $x_2\geq H+h$,
\begin{align*}
u(x)&=\int_{\Gamma^H}\frac{\partial\Phi(x,y)}{\partial y_2}u_0(y)\d s(y)+\int_{\Gamma^H}\frac{\partial\Phi(x,y)}{\partial y_2}u_1(y)\d s(y)+\int_{\Gamma^H}\frac{\partial\Phi(x,y)}{\partial y_2}u_2(y)\d s(y)\\
&:=v_0+v_1+v_2.
\end{align*}
Thus we only need to study the radiation conditions for $v_0$, $v_1$ and $v_2$. 

From the proof for Case I, we can easily find a constant $C>0$ such that
\[
|v_0|,\,|v_1|\leq Cr^{-1/2},\quad\left|\frac{\partial v_0}{\partial r}-\i k v_0\right|\,\left|\frac{\partial v_1}{\partial r}-\i k v_1\right|\leq Cr^{-3/2}.
\]
Thus we only need to focus on $v_2$.

Again with \eqref{eq:decom_u},
\begin{align*}
v_2(x)&=\sum_{j\in\Z}\int_{\alpha_0-\delta}^{\alpha_0+\delta}|\alpha-\alpha_0|\,g_j(\alpha)e^{\i(\alpha+j)x_1+\i\sqrt{k^2-(\alpha+j)^2}(x_2-H)}\d\alpha\\
&:=\sum_{j\in\Z}v_2^{(j)}(x)\\
&=\sum_{j\in\in J_-^1}+\sum_{j\in J_0^1}+\sum_{j\in J_+^1}\int_{\alpha_0-\delta}^{\alpha_0+\delta}|\alpha-\alpha_0|\,g_j(\alpha)e^{\i(\alpha+j)x_1+\i\sqrt{k^2-(\alpha+j)^2}(x_2-H)}\d\alpha,
\end{align*}
where $\|g_j\|_{C^m(\alpha_0-\delta,\alpha_0+\delta)}$ are uniformly bounded with respect to $j$.

When $j\in J_-$, with Theorem \ref{th:v1_2_decay} and \ref{th:v1_2_rc}, there is a constant $C>0$ such that
\[
\left|v_2^{(j)}\right|\leq Cr^{-1/2},\quad \left|\frac{\partial v_2^{(j)}}{\partial r}-\i k v_2^{(j)}\right|\leq Cr^{-3/2}
\]
holds uniformly for $x\in U_{H+h}$.

When $j\in J_0$, with Corollary \ref{cr:decay_v_13i_2}, \ref{cr:decay_v13ii1_2} and \ref{cr:decay_v13ii2_2}, there is a constant $C>0$ such that
\[
\left|v_2^{(j)}\right|\leq Cr^{-1/2},\quad \left|\frac{\partial v_2^{(j)}}{\partial r}-\i k v_2^{(j)}\right|\leq Cr^{-3/2}
\]
holds uniformly for $x\in U_{H+h}$.

When $j\in J_+^1$, with Theorem \ref{th:decay_rc_v221} and \ref{th:decay_rc_v222},  there are two constants $C,\epsilon_0>0$ such that
\[
\left|v_2^{(j)}\right|, \left|\frac{\partial v_2^{(j)}}{\partial r}-\i k v_2^{(j)}\right|\leq Ce^{-\epsilon_0|j| h}r^{-2}
\]
holds uniformly for $x\in U_{H+h}$.

Similar to the proof for Case I, we can easily arrive at the final result for Case II. The proof is now finished.

\end{proof}

\section{Radiation condition for scattering problems with locally perturbed periodic surfaces}
\label{sec:loc_sur}

In this section, we extend the results to the case when the periodic surface is perturbed. Let $p:\,\R\rightarrow\R$ be a function which is compactly supported, then there is an $L>0$ such that $p(x_1)=0$ when $|x_1|\geq L$. Let
\[
\Gamma_p:=\{(x_1,\zeta(x_1)+p(x_1)):\,x_1\in\R\}\subset\R^2
\]
be the locally perturbed surface of $\Gamma$. Similarly we still define $\Omega_p$ be the domain above $\Gamma_p$. In this case, $\Omega_p$ is no longer periodic. In this section, we assume that both $\zeta$ and $p$ are bounded and $C^1$-continuous due to technical reasons.

We still consider the problem 
\begin{equation}
\label{eq:sca_p}
\Delta u_p+k^2 u_p=0\text{ in }\Omega_p;\quad u_p=f=-G(x,y)\text{ on }\Gamma_p
\end{equation}
with the UPRC \eqref{eq:uprc}, i.e.,
\[
u_p(x)=2\int_{\Gamma^H}\frac{\partial\Phi(x,y)}{\partial y_2}u_p(y)\d s(y),\quad x_2\geq H.
\]
 From Theorem \ref{th:wep}, the problem is still uniquely solvable in the weighted Sobolev space $\Omega_p^H:=\Omega_p\cap\R\times[0,H_0)$. 

Since $\Omega_p$ is not periodic, the Floquet-Bloch transform can not be applied directly to $u_p$. Following \cite{Lechl2016,Lechl2017}, we can define a diffeomorphis $\Phi_p:\,\Omega\rightarrow\Omega_p$, for example in the form of 
\[
\Phi_p(x)=\begin{cases}
\left(x_1,x_2+\frac{(x_2-H_0)^3}{(\zeta(x_1)-H_0)^3}p(x)\right),\quad x_2<H_0;\\
x,\quad x_2\geq H_0;
\end{cases}
\]
where $\max_{t\in\R}\{\zeta(t),\zeta(t)+p(t)\}<H_0<H$. 
Let $u_T:=u_p\circ\Phi_p$, then $u_T$ is a function defined in the periodic domain $\Omega$ and 
\[
u_T=u_p,\quad |x_1|\geq L,\text{ or }x_2\geq H_0.
\]
Thus the radiation condition for $u_T$ is exactly the same as $u_T$. 
Moreover, $u_T$ satisfies the problem
\begin{equation}
\label{eq:sca_T}
\nabla\cdot A_p\nabla u_T+k^2 c_p u_T=0\text{ in }\Omega;\quad u_T=f(\Phi_p(x))\quad\text{ on }\Gamma
\end{equation}
with the same UPRC. Here $A_p$ and $c_p$ are defined by $\Phi_p$:
\begin{align*}
A_p(x)&=\left|\det\nabla\Phi_p(x)\right|\left[(\nabla\Phi_p(x))^{-1}(\nabla\Phi_p(x))^{-T}\right]\in L^\infty(\Omega_{H_0},\R^2\times\R^2);\\
c_p(x)&=\left|\det\nabla\Phi_p(x)\right|\in L^\infty(\Omega_{H_0}).
\end{align*}
From the definition of $\Phi_p$, $A_p-I_2$ and $c_p-1$ are both compactly supported in the domain $\Omega\cap[-L,L]\times[0,H_0]$. 

Now we are prepared to apply the Floquet-Bloch transform to the transformed problem \eqref{eq:sca_T}. Define $w(\alpha,x):=\J u_T$, following Section 3.2 in \cite{Zhang2017e}, we have exactly the same regularity as the purely perturbed case, i.e., either \eqref{eq:sing_c1} or \eqref{eq:sing_c2} holds for the transformed field $w$. It implies that all the proofs for the purely periodic case can be adopted directly for the case with local perturbations. Thus Theorem \ref{th:SRC} also holds for $u_T$ (equivalently, $u_p$).\\

We can also consider the scattering problems with incident plane waves. Let $u^i$ be a plane wave, then consider the problem
\begin{equation}
\label{eq:sca_pl}
\Delta u_p+k^2 u_p=0\text{ in }\Omega_p;\quad u_p=f:=-u^i\text{ on }\Gamma_p,
\end{equation}
with the UPRC. To analyze this problem, we also need a reference problem defined in a purely periodic domain:
\begin{equation}
\label{eq:sca_plr}
\Delta u_r+k^2 u_r=0\text{ in }\Omega;\quad u_r=-u^i\text{ on }\Gamma,
\end{equation}
with the UPRC. For \eqref{eq:sca_pl}, we again apply the domain transformation $\Phi_p$ and let $u_T:=u_p\circ\Phi_p$, then $u_T$ is defined in $\Omega$ and satisfies
\begin{equation}
\label{eq:sca_plT}
\nabla\cdot A_p\nabla u_T+k^2 c_p u_T=0\text{ in }\Omega;\quad u_T=f(\Phi_p(x))\text{ on }\Gamma.
\end{equation}
Let $u_d:=u_p-u_i$, then it satisfies
\begin{equation}
\label{eq:sca_diff}
\Delta u_d+k^2 u_d=g\text{ in }\Omega;\quad u_d=f-f\circ\Phi_p\text{ on }\Gamma,
\end{equation}
where
\[
g=\nabla\cdot(I_2-A_p)\nabla u_T+k^2(1-c_p)u_T
\]
is compactly supported, and $u_d\big|_{\Gamma}$ is also compactly supported. Following the arguments in previous section, it is easily checked that $u_d$ satisfies the radiation condition defined in Theorem \ref{th:SRC}. Thus the solution $u_p$ is decomposed as
\[
u_p=u_r+u_d,
\]
where $u_r$ satisfies the Rayleigh expansion \eqref{eq:rayleigh} and $u_d$ satisfies the radiation condition defined in Definition \ref{def:rc}. The result is now summarized as the following theorem.

\begin{theorem}
Let $\Gamma_p$ be a local perturbation of the periodic surface $\Gamma$. Given the incident field $u^i$, which is either the point source, i.e., $u^i=G(x,y)$, or the plane wave $u^i=e^{\i k (x_1\cos\theta-x_2\sin\theta)}$ with $\theta\in(0,\pi)$. Then
\begin{enumerate}[label=(\roman*)]
\item when $u^i$ is a point source, the scattered field $u_p$ satisfies the radiation condition defined by Definition \ref{def:rc};
\item when $u^i$ is a plane wave, the scattered  field $u_p$ is decomposed as
\[
u_p=u_r+u_d,
\]
where $u_r$ satisfies the Rayleigh expansion \eqref{eq:rayleigh} with $\alpha= k\cos\theta$ and $u_d$ satisfies the radiation condition defined by Definition \ref{def:rc}.
\end{enumerate}
\end{theorem}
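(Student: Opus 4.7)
The plan is to reduce both cases to the already-established Theorem \ref{th:SRC} via the domain transformation $\Phi_p$ and, in the plane-wave case, a subtraction argument that kills the non-decaying boundary data. Since $\Phi_p$ is the identity outside the bounded set $[-L,L]\times[0,H_0]$, the transformed field $u_T=u_p\circ\Phi_p$ coincides with $u_p$ wherever the radiation condition needs to be verified, so it suffices to establish Definition \ref{def:rc} for the appropriate quantity defined on the purely periodic domain $\Omega$.

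For part (i), I would first write down the transformed problem \eqref{eq:sca_plT} satisfied by $u_T$ in $\Omega$, with coefficients $A_p,c_p$ that agree with $I_2,1$ outside a compact set, together with the UPRC (which is preserved because $\Phi_p$ is the identity above $\Gamma^{H_0}$). The Floquet-Bloch transform $w=\J u_T$ is then well-defined and the arguments in Section 3.2 of \cite{Zhang2017e} apply verbatim to produce the decompositions \eqref{eq:sing_c1} or \eqref{eq:sing_c2} with the same regularity in $\alpha$ as in Theorem \ref{th:reg_sca}. From this point the entire chain of estimates in Sections \ref{sec:decay_uj}--\ref{sec:proof_main} goes through unchanged, yielding the radiation condition for $u_T$ and hence for $u_p$.

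For part (ii), the obstacle is that the Dirichlet data $f=-u^i$ produced by a plane wave does not decay along $\Gamma_p$, so Theorem \ref{th:wep} does not directly produce a solution in a weighted space and the previous analysis does not apply. I would bypass this by introducing the reference problem \eqref{eq:sca_plr} on the purely periodic domain $\Omega$, whose solution $u_r$ is known to admit the Rayleigh expansion \eqref{eq:rayleigh} with quasi-periodicity parameter $\alpha=k\cos\theta$. Setting $u_d:=u_p-u_r$ and transporting $u_p$ back through $\Phi_p$ as in part (i), a short computation shows that $u_d$ solves \eqref{eq:sca_diff} with a right-hand side $g=\nabla\cdot(I_2-A_p)\nabla u_T+k^2(1-c_p)u_T$ and boundary datum $f-f\circ\Phi_p$, both compactly supported because $A_p-I_2$, $c_p-1$ and $\Phi_p-\mathrm{id}$ vanish outside a bounded set. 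Hence $u_d$ has compactly supported data and belongs to the weighted space, so part (i) (or directly Theorem \ref{th:SRC}) applies to $u_d$.

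The main obstacle is the verification that the Floquet-Bloch regularity Theorem \ref{th:reg_sca} survives the replacement of the constant-coefficient Helmholtz operator by the variable-coefficient operator $\nabla\cdot A_p\nabla+k^2c_p$; however this is exactly the setting treated in \cite{Zhang2017e}, so the singularity structure at $\alpha\in\S$ is identical, the proofs in Sections \ref{sec:rc_0}--\ref{sec:rc_2} depend only on this structure and the decay of the Fourier coefficients $g_j$ on $\Gamma^H$, and no further work is required. The rest is bookkeeping: one has to observe that the splitting $u_p=u_r+u_d$ in part (ii) is consistent with the UPRC (both $u_r$ and the transported $u_d$ satisfy it above $\Gamma^H$), which follows from linearity of \eqref{eq:uprc}.
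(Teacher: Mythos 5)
Your proposal is correct and follows essentially the same route as the paper: diffeomorphism $\Phi_p$ to transfer the perturbed problem to the periodic domain $\Omega$, invocation of the $\alpha$-regularity of the transformed field from \cite{Zhang2017e} so that Theorem \ref{th:SRC} applies, and, for the plane wave, a subtraction against the periodic reference solution $u_r$ to obtain a remainder with compactly supported boundary datum and volume source. One small remark: the paper's line ``Let $u_d := u_p - u_i$'' appears to be a slip (the intended quantity, consistent with \eqref{eq:sca_diff} and the final decomposition, is the difference between the transported solution $u_T = u_p\circ\Phi_p$ and the reference $u_r$); your formulation $u_d = u_p - u_r$ combined with the transport through $\Phi_p$ is the cleaner and correct reading, so this is a clarification rather than a gap.
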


\appendix

\section{Appendix A: Useful formulas and estimations}

\subsection{Fresnel integral and its application}
\label{sec:fresnel}
Recall that the Fresnel integrals are defined by (see \cite{Abram1972}):
\begin{equation}
\label{eq:fresnel}
S(t)=\int_0^t \sin(z^2)\d z,\quad C(t)=\int_0^t \cos(z^2)\d z
\end{equation}
with asymptotic behaviours
\begin{align*}
& S(t)=\sqrt{\frac{\pi}{8}}{\rm sgn}(t)-\left[1+O\left(t^{-4}\right)\right]\left(\frac{\cos(t^2)}{2t}+\frac{\sin(t^2)}{4t^3}\right),\\
& C(t)=\sqrt{\frac{\pi}{8}}{\rm sgn}(t)-\left[1+O\left(t^{-4}\right)\right]\left(\frac{\sin(t^2)}{2t}-\frac{\cos(t^2)}{4t^3}\right),
\end{align*}
where sgn is the sign function which takes the  value of $1$ for positive $t$, $-1$ for negative $t$ and $0$ when $t=0$. Thus we claim that the integral
\[
\int_0^t e^{\i x^2}\d x=\left(C(t)+\i S(t)\right) 
\]
is uniformly bounded for $t\geq 0$ since $S(t)$ and $C(t)$ are uniformly bounded.

We can also extend the Fresnel integral into more general cases (see \cite{Matha2012}): 
\[
\int_0^\infty x^m e^{\i x^n}\d x=\frac{1}{n}\Gamma\left(\frac{m+1}{n}\right)e^{\i\pi\frac{m+1}{2n}}
\]
where $\Gamma$ is the $\Gamma$-function. In particular,
\begin{equation}
\label{eq:gen_fresnel}
\int_0^\infty\sqrt{x}e^{\i x^2}\d x=\frac{1}{2}\Gamma\left(\frac{3}{4}\right)\exp\left(\frac{3\i\pi}{8}\right).
\end{equation}



\subsection{Properties of the Bessel functions}

In this section, we recall some important properties of the Bessel functions. First, the derivative:
\begin{equation}
\label{eq:der_bessel}
\left(\frac{1}{z}\frac{\d}{\d z}\right)^m[z^n J_n(z)]=z^{n-m}J_{n-m}(z).
\end{equation}
In particular, when $m=1$, 
\[
\frac{\d}{\d z}[z^n J_n(z)]=z^n J_{n-1}(z).
\]
Let $z:=rx$ where $r>0$, then
\[
\frac{\d}{\d z}=\frac{1}{r}\frac{\d}{\d x}. 
\]
This implies that
\begin{equation}
\label{eq:der_bessel_1}
\frac{\d}{\d x}[x^n J_n(rx)]= r x^n J_{n-1}(rx).
\end{equation}

The property also holds for Hankel functions of the first kind. The following equations will also be useful:
\begin{eqnarray}
\label{eq:hankel_diff1}
&&\frac{\d}{\d t}H_0^{(1)}(t)=-H_1^{(1)}(t);\\
\label{eq:hankel_diff2}
&&\frac{\d}{\d t}H_1^{(1)}(t)=H_0^{(1)}(t)-\frac{1}{t}H_1^{(1)}(t).
\end{eqnarray}

We also need the asymptotic behaviour
\begin{equation}
\label{eq:asym}
H_n^{(1)}(t)=\sqrt{\frac{2}{\pi t}}\exp\left(\i\left(t-\frac{n\pi}{2}-\frac{\pi}{4}\right)\right)\left[1+O\left(\frac{1}{t}\right)\right],\quad t\rightarrow\infty.
\end{equation}

\section{Appendix B: Analysis of some important integrals}
\label{sec:fourier}

To prepare for the estimation of the radiation condition in Theorem \ref{th:SRC} with the help of the Floquet-Bloch transform, we  need to estimate the decay of some integrals depends on parameters, which are concluded in Table \ref{tb:integral}. 

\begin{table}[H]
\centering
\label{tb:integral}
\begin{tabular}{cccc}
\hline
class & name & definition & decay rate\\
\hline
\hline
A & $I^A_1(r)$ & $\int_0^\delta\frac{1}{\sqrt{x}}\phi(x)e^{\i r x}\d x$ & $O\left(r^{-1/2}\right)$\\
& $I^A_2(r)$ & $\int_0^\delta x^{-1/2+m}\phi(x)e^{\i r x}\d x$ & $O\left(r^{-1/2-m}\right)$\\
& $I^A_3(r)$ & $\int_0^\delta x^m\phi(x)e^{\i r x}\d x$ & $O\left(r^{-1-m}\right)$\\
\hline
B  & $I^B_1(r)$ & $\int_0^\delta \sqrt{x}\phi(x)e^{\i r x^2}\d x$ & $O\left(r^{-3/4}\right)$\\
& $I^B_2(r)$ & $\int_{-a}^\delta(x+a)^2\phi(x)e^{\i r x^2}\d x$ & $O\left(r^{-1/2}\right)$\\
\hline
C & $I^C_1(r)$ & $\int_{\gamma_1}^{\gamma_2}\frac{1}{\sqrt{x-a}}\phi(x)e^{\i r x^2}\d x$ & $a^{-1}O\left(r^{-1/2}\right)$\\
& $I^C_2(r)$ & $\int_{\gamma_1}^{\gamma_2}{\sqrt{x-a}}\phi(x)e^{\i r x^2}\d x$ & $O\left(r^{-1/2}\right)$\\
\hline
D & $I^D_1(r)$ & $\int_{\gamma_1}^{\gamma_2}|x-a|\phi(x)e^{\i r x^2}\d x$ & $O\left(r^{-1/2}\right)$\\
& $I^D_2(r)$ & $\int_{\gamma_1}^{\gamma_2}{\rm sgn}(x-a) \phi(x)e^{\i  r x^2}\d x$ & $O\left(r^{-1/2}\right)$\\
\hline
\end{tabular}
\end{table}
We will consider the decay rate of the above integrals when $r\rightarrow\infty$. Here $\delta>0$ is a fixed small value. For simplicity, we all require that $\phi\in C^\infty_0(\R)$ and vanishes either near $\delta$ (for example in  $I^A_1(r),\, I^A_2(r),\,I^B_1(r),\,I^B_2(r)$) or the edge points $\gamma_1,\gamma_2$ (for example in $I^C_1(r),\,I^C_2(r),\,I^D_1(r),\,I^D_2(r)$. Note that the results can be naturally extended to  more general functions, i.e., the condition $\phi\in C^\infty_0(\R)$ is always too strong (for example see Remark \ref{rm:smooth}). We will not discuss the very explicit explanations since it will be too long for the readers. For the discussions of the integrals of Classes A, B, C and D, we refer to Subsections \ref{sec:intg}, \ref{sec:intg2}, \ref{sec:intg3} and \ref{sec:intg4}, respectively.

\subsection{Analyse of the integrals of Class A}
\label{sec:intg}

We begin with {\bf the first integral of Class A}:
\begin{equation}
\label{eq:inta1}
I_1^A(r):=\int_0^\delta \frac{1}{\sqrt{x}}\phi(x)e^{\i r x}\d x,\quad r>0.
\end{equation}
where $\phi\in C^\infty_0(\R)$ which vanishes near $x=\delta$. The decay rate is concluded in the following theorem.

\begin{theorem}\label{th:decay_inta1}
For any $\phi\in C^\infty_0(\R)$ which vanishes in a small neighbourhood of $x=\delta$, there is a constant $C>0$ which only depends on $\phi$ such that
\begin{equation}
\label{eq:decay_inta1}
|I^A_1(r)|\leq Cr^{-1/2},\quad \forall\, r>0.
\end{equation}
\end{theorem}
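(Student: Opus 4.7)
\textbf{Proof proposal for Theorem \ref{th:decay_inta1}.} The essential difficulty is the integrable singularity $x^{-1/2}$ at the left endpoint, which blocks a naive integration by parts. My plan is to isolate the singular region with a smooth cutoff, then use the substitution $x=t^2$ to convert the problem into a one-dimensional oscillatory integral with quadratic phase, whose decay is controlled by the Fresnel estimate of Section \ref{sec:fresnel}.

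First I would fix a smooth cutoff $\chi\in C_0^\infty(\R)$ with $\chi(x)=1$ for $x\le\delta/4$ and $\chi(x)=0$ for $x\ge\delta/2$, and write $I_1^A(r)=A(r)+B(r)$, where $A(r)$ uses weight $\chi\phi$ and $B(r)$ uses weight $(1-\chi)\phi$. In $B(r)$ the integrand $\frac{(1-\chi(x))\phi(x)}{\sqrt{x}}$ is smooth on $[0,\delta]$ (away from $x=0$) and vanishes in neighbourhoods of both endpoints, so repeated integration by parts against $e^{\i r x}$ gives $|B(r)|\le C_N r^{-N}$ for every $N$, which is much better than needed.

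The work is in $A(r)$. Substituting $x=t^{2}$ with $dx=2t\,dt$ I get
\[
A(r)=2\int_{0}^{\sqrt{\delta}}\chi(t^{2})\phi(t^{2})\,e^{\i r t^{2}}\,dt,
\]
where $g(t):=\chi(t^{2})\phi(t^{2})$ is smooth, compactly supported in $[0,\sqrt{\delta/2}]$, and bounded together with its derivatives by $\|\phi\|_{C^{1}}$. I split at $t_{0}=r^{-1/2}$. On $[0,t_{0}]$ the trivial bound gives $\bigl|\int_0^{t_0}g(t)e^{\i r t^2}dt\bigr|\le \|g\|_{\infty}r^{-1/2}$. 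On $[t_{0},\sqrt{\delta/2}]$ I use the identity $e^{\i r t^{2}}dt=\frac{1}{2\i r t}\,d(e^{\i r t^{2}})$ and integrate by parts, producing a boundary term at $t_{0}$ of size $O(r^{-1/2})$ (the boundary contribution at $\sqrt{\delta/2}$ vanishes because $g$ does) and a residual integral bounded by $\int_{t_0}^{\sqrt{\delta/2}}\!\bigl(\frac{|g'(t)|}{rt}+\frac{|g(t)|}{rt^{2}}\bigr)dt \le C\bigl(r^{-1}\log(r\delta)+r^{-1}\cdot r^{1/2}\bigr)=O(r^{-1/2})$.

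The main obstacle is precisely this near-origin piece: one cannot remove the oscillation by a single integration by parts because of the factor $1/t$, and the rescaling $s=t\sqrt{r}$ (equivalent to what I did by the split at $t_{0}=r^{-1/2}$) shows that the bound $O(r^{-1/2})$ is sharp and matches the asymptotic size of the Fresnel integral in \eqref{eq:fresnel}. An alternative, perhaps more conceptual, way to conclude is to write directly
\[
A(r)=\frac{2}{\sqrt{r}}\int_{0}^{\infty}g\!\bigl(s/\sqrt{r}\bigr)\,e^{\i s^{2}}\,ds,
\]
and use the uniform boundedness of $\int_{0}^{T}e^{\i s^{2}}ds$ together with the $O(s^{2}/r)$-perturbation $g(s/\sqrt r)-g(0)$. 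Either way, combining $A(r)=O(r^{-1/2})$ with the super-algebraic decay of $B(r)$ yields \eqref{eq:decay_inta1} with a constant $C$ depending only on finitely many $C^{k}$-norms of $\phi$.
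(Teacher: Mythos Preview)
Your argument is correct, but it takes a different route from the paper's. The paper does not introduce a spatial cutoff; instead it subtracts the value of $\phi$ at the singular endpoint, writing $\phi(x)=\phi(0)+x\psi(x)$ with $\psi\in C^\infty$, so that
\[
I_1^A(r)=\phi(0)\int_0^\delta x^{-1/2}e^{\i r x}\d x+\int_0^\delta \sqrt{x}\,\psi(x)e^{\i r x}\d x.
\]
The first integral is reduced by $s=\sqrt{rx}$ to a Fresnel integral and is exactly of order $r^{-1/2}$; the second has no singularity and a single integration by parts gives $O(r^{-1})$. This is shorter than your cutoff-plus-scale-split scheme, and it has the bonus of exhibiting the leading asymptotic constant (a Fresnel number times $\phi(0)$) explicitly. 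Your approach, on the other hand, is closer in spirit to the general van der Corput machinery: the substitution $x=t^2$ turns the problem into an oscillatory integral with a nondegenerate quadratic phase, after which the split at $t_0=r^{-1/2}$ is the standard way to extract the $r^{-1/2}$ rate. Your method would generalise more mechanically to other endpoint weights, while the paper's Taylor split is the cleanest way to handle this particular $x^{-1/2}$ weight. Note also that your cutoff step is not really needed: since $\phi$ already vanishes near $\delta$, the substitution $x=t^2$ applied directly to $I_1^A(r)$ produces a smooth compactly supported $g(t)=\phi(t^2)$, and your scale-split argument goes through unchanged.
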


\begin{proof}
From direct computation,
\[
I_1^A(r)=\phi(0)\int_0^\delta\frac{1}{\sqrt{x}}e^{\i r x}\d x+\int_0^\delta {\sqrt{x}}\psi(x)e^{\i r x}\d x:=\phi(0)(I)+(II),
\]
where
\[
\psi(x)=\frac{\phi(x)-\phi(0)}{x}\quad\text{ with }\psi(0)=\phi'(0).
\]
Then $\psi\in C^\infty(\R)$ with $\psi'(0)=\phi''(0)/2$. 

For (I), let $s:=\sqrt{rx}$, thus $2s\d s=r\d x$, which results in
\[
(I)=\frac{2}{\sqrt{r}}\int_0^{\sqrt{r\delta}}e^{\i s^2}\d s=\frac{2}{\sqrt{r}}\left[C\left(\sqrt{r\delta}\right)+\i S\left(\sqrt{r\delta}\right)\right],
\]
where $S(x)$ and $C(t)$ are Fresnel integrals (see equation \ref{eq:fresnel}) and uniformly bounded. Thus
\[
\left|(I)\right|\leq C\frac{1}{\sqrt{r}}|\phi(0)|,\quad r\rightarrow\infty.
\]
For (II), with integration by parts,
\begin{align*}
(II)=\frac{1}{\i r}\int_0^\delta \sqrt{x}\psi(x)\d e^{\i r x}=\frac{1}{\i r }\sqrt{\delta}\psi(\delta)e^{\i r \delta}+\frac{1}{r}\int_0^\delta \frac{1}{\sqrt{x}}\tilde{\psi}(x) e^{\i r x}\d x
\end{align*}
where $$\tilde{\psi}(x)=\i\left(\frac{\psi(x)}{2}+x\psi'(x)\right)\in C^\infty(\R).$$ Since $\frac{1}{\sqrt{x}}$ is integrable in $[0,\delta]$
\[
\left|(II)\right|\leq \frac{C}{r}.
\]
Together with (I), we get that
\[
|I_1^A(r)|\leq C\frac{1}{\sqrt{r}}.
\]
\end{proof}
\begin{remark}
\label{rm:smooth}
From the proof of Theorem \ref{th:decay_inta1}, the positive constant $C$ only depends on the norm $\|\phi\|_{C^2[0,\delta]}$ thus the result is extended naturally for all $\phi\in C^2_0[0,\delta]$. However, for the rest of estimations in this section, we will not give very explicit discussions to save the space.  
\end{remark}

\vspace{0.2cm}

With the analysis of the first integral of Class A, we are prepared to analyze {\bf the second integral of Class A}
\begin{equation}
\label{eq:inta2}
I_2^A(r):=\int_0^\delta x^{-1/2+m}\phi(x)e^{\i r x}\d x,\quad m\in\N.
\end{equation}
The estimation will be concluded as the corollary of Theorem \ref{th:decay_inta1}.

\begin{corollary}
\label{cr:decay_inta2}
For any $\phi\in C^\infty_0(\R)$ which vanishes in a small neighbourhood of $x=\delta$ and any non-negative integer $m$, there is a constant $C>0$ which only depends on $\phi$ and $m$ such that
\begin{equation}
\label{eq:decay_inta2}
\left|I^A_2(r)\right|\leq Cr^{-1/2-m},\quad \forall\, r>0.
\end{equation}
\end{corollary}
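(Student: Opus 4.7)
The plan is to proceed by induction on the non-negative integer $m$, using Theorem \ref{th:decay_inta1} as the base case. For $m=0$ the integral $I_2^A(r)$ reduces exactly to $I_1^A(r)$, so the bound $|I_2^A(r)|\leq C r^{-1/2}$ is already provided; it only remains to implement the inductive step.

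Assume that the estimate
\[
\left|\int_0^\delta x^{m-1/2}\phi(x) e^{\i r x}\d x\right|\leq C r^{-1/2-m}
\]
has been established at level $m$ for every admissible density $\phi$. I want to promote it to level $m+1$. Since $\phi$ vanishes in a neighbourhood of $\delta$ and $x^{m+1/2}$ vanishes at $0$ (because $m+1/2\geq 1/2>0$), integration by parts produces no boundary contributions, and one obtains
\begin{equation*}
\int_0^\delta x^{m+1/2}\phi(x) e^{\i r x}\d x = -\frac{1}{\i r}\int_0^\delta\left[\left(m+\frac{1}{2}\right) x^{m-1/2}\phi(x) + x^{m+1/2}\phi'(x)\right] e^{\i r x}\d x.
\end{equation*}
The first summand on the right is $r^{-1}$ times an integral of exactly the form controlled at level $m$ with density $\phi$, so the inductive hypothesis immediately yields a contribution of order $r^{-1}\cdot r^{-1/2-m}=r^{-1/2-(m+1)}$.

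The main subtlety is that the second summand looks, at first sight, like an integral of the very form we are currently trying to bound (i.e., level $m+1$ with density $\phi'$), so a naïve induction would close in a loop. The way around this is the algebraic identity $x^{m+1/2}\phi'(x) = x^{m-1/2}\cdot[x\phi'(x)]$: the new density $x\phi'(x)$ is again in $C_0^\infty(\R)$ and still vanishes in a neighbourhood of $\delta$, so the integral $\int_0^\delta x^{m-1/2}\,[x\phi'(x)]\,e^{\i r x}\d x$ now falls under the inductive hypothesis at level $m$ and is of order $r^{-1/2-m}$. Multiplied by the $r^{-1}$ prefactor it produces another contribution of order $r^{-1/2-(m+1)}$. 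Combining the two pieces closes the induction, with a constant depending only on $m$ and on finitely many derivatives of $\phi$ on $[0,\delta]$; no further technical obstacle is anticipated.
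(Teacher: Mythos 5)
Your proof is correct and takes essentially the same approach as the paper: induction on $m$ with integration by parts, using the vanishing of $\phi$ near $\delta$ and of $x^{m+1/2}$ at $0$ to kill the boundary terms, then folding the extra factor of $x$ into the density to land back at level $m$. The only cosmetic difference is that the paper bundles the two terms of $\bigl(x^{m+1/2}\phi(x)\bigr)'$ into a single new density $\psi(x)=(m+\tfrac12)\phi(x)+x\phi'(x)$ and applies the inductive hypothesis once, whereas you treat the two summands separately and apply it twice; this is the same argument.
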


\begin{proof}
We prove by induction. For $m=0$, the result has already been proved in Theorem \ref{th:decay_inta1}. Then assume that for a non-negative integer $m$, the estimation \eqref{eq:decay_inta2} holds. Then for $m+1$, from integration by parts, since $\phi(\delta)=0$,
\begin{align*}
\int_0^\delta {x}^{-1/2+m+1}\phi(x) e^{\i r x} \d x&=\frac{1}{\i r}\int_0^\delta {x}^{1/2+m}\psi(x)\d e^{\i r x}\\&=-\frac{1}{\i r}\int_0^\delta\left(x^{1/2+m}\phi(x)\right)'e^{\i r x}\d x:=-\frac{1}{\i r}\int_0^\delta x^{-1/2+m}\psi(x)e^{\i r x}\d x
\end{align*}
where 
\[
\psi(x):=(-1/2+m+1)\phi(x)+x\phi'(x)\in C^\infty_0(\R)
\]
which vanishes near $x=\delta$. With \eqref{eq:decay_inta2}, there is a constant $C>0$ which only depends on $\psi$ (equivalently $\phi$) such that
\[
\left|\int_0^\delta {x}^{1/2+m}\psi(x)e^{\i r x}\d x\right|\leq Cr^{-1}r^{-1/2-m}=Cr^{-1/2-m-1}.
\]
The proof is then finished.
\end{proof}

\vspace{0.2cm}

We are also able to estimate the asymptotic behaviours of {\bf the third integral of Class A:}
\begin{equation}
\label{eq:inta3}
I^A_3(r):=\int_0^\delta x^m\phi(x)e^{\i r x}\d x,\quad r>0,
\end{equation}
where $m=0,1,2,\dots$. 

\begin{theorem}
\label{th:decay_inta3}
Let $\phi\in C_0^\infty(\R)$ and vanishes near $\delta$, $m$ is a non-negative integer. Then there is a constant that depends only on $\phi$ and $m$ such that
\begin{equation}
\label{eq:decay_inta3}
\left|I^A_3(r)\right|\leq Cr^{-1-m}.
\end{equation}
\end{theorem}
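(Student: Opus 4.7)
The plan is to prove the estimate by integrating by parts exactly $m+1$ times, exploiting the twofold vanishing structure of the integrand: the factor $x^m$ vanishes to order $m$ at the left endpoint, and $\phi$ vanishes identically in a neighborhood of the right endpoint. Each integration by parts will produce a factor of $1/(\i r)$, so after $m+1$ iterations we gain exactly the desired decay $r^{-1-m}$, provided no divergent boundary terms appear along the way.

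More concretely, set $f(x):=x^m\phi(x)$. By the Leibniz rule $f^{(j)}(0)=0$ for every $0\le j<m$, since every surviving term carries at least one factor of $x$ evaluated at $0$, while $f^{(m)}(0)=m!\,\phi(0)$. At the upper endpoint, because $\phi$ vanishes in a neighborhood of $\delta$, every derivative $f^{(j)}(\delta)=0$. I would then perform one integration by parts, writing
\[
\int_0^\delta f(x)e^{\i r x}\d x=-\frac{f(0)}{\i r}-\frac{1}{\i r}\int_0^\delta f'(x)e^{\i r x}\d x,
\]
and iterate. For $1\le k\le m$ the boundary contributions vanish on both sides, and after $m$ steps one arrives at
\[
\int_0^\delta f(x)e^{\i r x}\d x=\left(-\frac{1}{\i r}\right)^{m}\int_0^\delta f^{(m)}(x)e^{\i r x}\d x.
\]
One further integration by parts produces the only surviving boundary term $f^{(m)}(0)=m!\,\phi(0)$, together with an integral of $f^{(m+1)}$ against $e^{\i r x}$.

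The final expression is then bounded termwise: the boundary contribution is at most $m!|\phi(0)|/r^{m+1}$, and the remaining integral is at most $\|f^{(m+1)}\|_{L^1(0,\delta)}/r^{m+1}$, where the latter norm depends only on $\phi$ and $m$ (note that $f^{(m+1)}$ is smooth on $[0,\delta]$ since $f$ is smooth, so no singularity issues arise, in contrast to the earlier integrals $I_1^A,I_2^A$). Adding these yields the estimate $|I_3^A(r)|\le C r^{-1-m}$ with $C$ depending only on $\phi$ and $m$.

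I do not expect a serious obstacle here: the argument is standard stationary-phase-free integration by parts, and the factor $x^m$ is exactly what is needed to kill the boundary terms at the origin for the first $m$ iterations. The only point that requires care is the bookkeeping in checking $f^{(j)}(0)=0$ for $j<m$, but this is immediate from the Leibniz formula. An alternative, equally clean route would be induction on $m$, using the base case $m=0$ (single integration by parts, with the lone boundary term $\phi(0)/(\i r)$) and the identity $(x^m\phi)'=mx^{m-1}\phi+x^m\phi'$ to reduce to the $m-1$ case after one integration by parts; this is the formulation I would use if the direct iterated-IBP argument appeared notationally heavy.
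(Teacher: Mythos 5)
Your proposal is correct and takes essentially the same approach as the paper: both proofs rest on iterated integration by parts, using the factor $x^m$ to annihilate boundary contributions at $0$ and the vanishing of $\phi$ near $\delta$ to annihilate them at the right endpoint. The paper organizes this as an induction on $m$ (each inductive step being a single integration by parts, with $(x^{m+1}\phi)' = x^m\psi$ for a smooth compactly supported $\psi$), which is precisely the alternative formulation you offer at the end of your argument.
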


\begin{proof}
We prove by induction. First when $m=0$, from integration by parts,
\begin{align*}
\int_0^\delta \phi(x)e^{\i  rx}\d x&=\frac{1}{\i r}\int_0^\delta \phi(x)\d e^{\i r x}\\
&=-\frac{1}{\i r}\phi(0)-\frac{1}{\i r}\int_0^\delta\phi'(x)e^{\i r x}\d x.
\end{align*} 
Then we can immediately get that there is a $C>0$ such that
\[
\left|\int_0^\delta \phi(x)e^{\i  rx}\d x\right|\leq Cr^{-1}.
\]
Now assume that the estimation \eqref{eq:decay_inta3} holds for any $m\geq 1$, then for $m+1$,
\begin{align*}
\int_0^\delta x^{m+1}\phi(x)e^{\i  rx}\d x&=\frac{1}{\i r}\int_0^\delta x^{m+1}\phi(x)\d e^{\i r x}\\
&=-\frac{1}{\i r}\int_0^\delta\left[x^{m+1}\phi(x)\right]e^{\i r x}\d x:=-\frac{1}{\i r}\int_0^\delta x^m\psi(x) e^{\i r x}\d x,
\end{align*}
where
\[
\psi(x)=(m+1)\phi(x)+x\phi'(x)\in C_0^\infty(\R),
\]
which also vanishes around $\delta$. With \eqref{eq:decay_inta3},
\[
\left|\int_0^\delta x^{m+1}\phi(x)e^{\i  rx}\d x\right|\leq\frac{1}{r}\left|\int_0^\delta x^m\psi(x) e^{\i r x}\d x\right|\leq Cr^{-1-m}\frac{1}{r}=Cr^{-2-m}.
\]
Thus \eqref{eq:decay_inta3} also holds for $m+1$. The proof is finished.
\end{proof}

\subsection{Analyse of the integrals of Class B}
\label{sec:intg2}

{\bf The first integral of Classe B} is defined as:
\begin{equation}
\label{eq:intb1}
I_1^B(r):=\int_0^\delta \sqrt{x}\phi(x)e^{\i r x^2}\d x,\quad r>0,
\end{equation}
where the function $\phi$ has exactly the same property as in Section \ref{sec:intg}.

\begin{theorem}
\label{th:intb1}
When $\phi\in C_0^\infty(\R)$ and vanishes near $\delta$, then there is a constant $C>0$ only depends on $\phi$ such that
\begin{equation}
\label{eq:decay_intb1}
\left|I_1^B(r)\right|\leq Cr^{-3/4},\quad \forall\, r>0.
\end{equation}
\end{theorem}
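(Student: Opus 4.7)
The plan is to localize the integral near the stationary point $x=0$ at the natural Fresnel scale $x\sim r^{-1/2}$. On the near side the amplitude $\sqrt{x}\phi(x)$ is small enough to be estimated trivially, while on the far side the oscillation of $e^{\i r x^2}$ becomes strong enough that a single integration by parts extracts the decay. Concretely, I would write
\[
I_1^B(r) = \int_0^{r^{-1/2}}\sqrt{x}\,\phi(x)\,e^{\i r x^2}\d x + \int_{r^{-1/2}}^\delta \sqrt{x}\,\phi(x)\,e^{\i r x^2}\d x =: J_1(r)+J_2(r).
\]

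The estimate on $J_1(r)$ is immediate: since $|\sqrt{x}\phi(x)|\leq \|\phi\|_\infty\sqrt{x}$ and $\int_0^{r^{-1/2}}\sqrt{x}\,\d x = \frac{2}{3}r^{-3/4}$, we obtain $|J_1(r)|\leq \frac{2}{3}\|\phi\|_\infty r^{-3/4}$. For $J_2(r)$, using $e^{\i r x^2}\d x = \frac{1}{2\i r x}\mathrm{d}(e^{\i r x^2})$ and integrating by parts yields
\[
J_2(r) = \frac{1}{2\i r}\left[\frac{\phi(x)}{\sqrt{x}}\,e^{\i r x^2}\right]_{r^{-1/2}}^{\delta} - \frac{1}{2\i r}\int_{r^{-1/2}}^\delta\left(\frac{\phi'(x)}{\sqrt{x}} - \frac{\phi(x)}{2 x^{3/2}}\right)e^{\i r x^2}\d x.
\]
The boundary term at $\delta$ vanishes because $\phi$ vanishes near $\delta$, and the boundary term at $r^{-1/2}$ is bounded by $\frac{1}{2r}\cdot \|\phi\|_\infty\cdot r^{1/4} = O(r^{-3/4})$. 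The integral of $\phi'(x)/\sqrt{x}$ is dominated by $\|\phi'\|_\infty\int_0^\delta x^{-1/2}\d x = O(1)$, contributing $O(r^{-1})$. The integral of $\phi(x)/x^{3/2}$ is dominated by $\|\phi\|_\infty\int_{r^{-1/2}}^\delta x^{-3/2}\d x = O(r^{1/4})$, contributing again $O(r^{-3/4})$. Summing all terms gives $|I_1^B(r)|\leq C r^{-3/4}$ with $C$ depending only on $\|\phi\|_{C^1[0,\delta]}$.

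The main subtlety is the choice of cutoff. A smaller threshold (say $r^{-1}$) would fail because the term $\int \phi(x) x^{-3/2}\d x$ would blow up too fast after integration by parts; a larger threshold (say $r^{-1/4}$) would wreck the direct estimate on $J_1$. Choosing the Fresnel-scale cutoff $x=r^{-1/2}$ balances the two contributions and is precisely what produces the exponent $3/4$. This is the only truly delicate point; the remaining calculation is a mechanical tracking of powers of $r$ after one integration by parts, analogous in spirit to the handling of $I_1^A$ in Theorem~\ref{th:decay_inta1}, but with the oscillator $r x^2$ rather than $r x$, which explains the replacement of $r^{-1/2}$ there by $r^{-3/4}$ here.
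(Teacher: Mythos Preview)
Your proof is correct and takes a genuinely different route from the paper. The paper does not split at the Fresnel scale; instead it writes $\phi(x)=\phi(0)+x\psi(x)$ with $\psi\in C^\infty$, so that
\[
I_1^B(r)=\phi(0)\int_0^\delta \sqrt{x}\,e^{\i r x^2}\d x+\int_0^\delta x^{3/2}\psi(x)e^{\i r x^2}\d x.
\]
For the first term it substitutes $s=\sqrt{r}\,x$ and invokes the generalized Fresnel integral $\int_0^\infty \sqrt{s}\,e^{\i s^2}\d s=\frac12\Gamma(3/4)e^{3\i\pi/8}$ (equation \eqref{eq:gen_fresnel}) to get exactly $O(r^{-3/4})$; for the second it integrates by parts once against $\d e^{\i r x^2}$, obtaining an integrand of order $x^{-1/2}$ and hence an $O(r^{-1})$ remainder. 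Your argument instead localizes at $x=r^{-1/2}$ and avoids any special-function identity altogether. The trade-off is that the paper's decomposition isolates the sharp leading coefficient (it shows the $r^{-3/4}$ contribution comes entirely from $\phi(0)$, with an $O(r^{-1})$ correction), whereas your splitting is more elementary, requires only $\phi\in C^1$, and is the standard stationary-phase manoeuvre that scales directly to other amplitude powers. One small remark: your splitting tacitly assumes $r^{-1/2}<\delta$; for $0<r\le \delta^{-2}$ the bound is trivial since $|I_1^B(r)|$ is uniformly bounded while $r^{-3/4}$ is bounded below.
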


\begin{proof}
Similar to the proof of Theorem \ref{th:decay_inta1}, we can also split the integral into two parts: 
\[
I_1^B(r)=\phi(0)\int_0^\delta\sqrt{x}e^{\i r x^2}\d x+\int_0^\delta x^{3/2}\psi(x)e^{\i r x^2}\d x:=\phi(0)(I)+(II),
\]
where
\[
\psi(x)=\frac{\phi(x)-\phi(0)}{x}\in C^\infty(\R).
\]
For (I), let $s:=\sqrt{r}x$, from \eqref{eq:gen_fresnel}
\[
|(I)|=r^{-3/4}\left|\int_0^{\sqrt{r}\delta}\sqrt{s}e^{\i s^2}\d s\right|\leq Cr^{-3/4}.
\]
For (II), with integration by parts,
\begin{align*}
|(II)|=\left|\frac{1}{2\i r}\int_0^\delta \sqrt{x}\psi(x)\d e^{\i r x^2}\right|\leq\frac{1}{2 r}\sqrt{\delta}|\psi(\delta)|+\frac{1}{2 r}\left|\int_0^\delta \frac{1}{\sqrt{x}}\tilde{\psi}(x)e^{\i r x^2}\d x\right|\leq Cr^{-1},
\end{align*}
where $C>0$ depends on $\psi$ and $\tilde{\psi}$, which is defined by
\[
\tilde{\psi}(x)=\frac{1}{2}\psi(x)+x\psi'(x)\in C^\infty(\R).
\]
So finally we can show that
\[
|I^B_1(r)|\leq Cr^{-3/4},\quad\forall\,r>0.
\]
\end{proof}

\vspace{0.2cm}

Now we consider {\bf the second integral of Class B}:
\begin{equation}
\label{eq:intb2}
I_2^B(r)=\int_{-a}^\delta (x+a)^2\phi(x)e^{\i r x^2}\d x.
\end{equation}

\begin{theorem}
\label{th:decay_intb2}
Suppose $\phi\in C^\infty_0(\R)$ vanishes near $\delta$, and fix  a number $a>0$. Then there is a constant $C>0$ which only depends on $\phi$ such that
\begin{equation}
\label{eq:decay_intb2}
\left|I^B_2(r)\right|\leq Cr^{-1/2}.
\end{equation}
\end{theorem}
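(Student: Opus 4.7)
The phase $x\mapsto rx^2$ has a non-degenerate stationary point at $x=0$, which lies in the interior of $[-a,\delta]$. Stationary-phase heuristics therefore predict $O(r^{-1/2})$ decay, with the leading contribution coming from the value of the amplitude at $x=0$, namely $a^2\phi(0)$. Neither endpoint interferes with this picture: the prefactor $(x+a)^2$ vanishes to second order at $x=-a$, and $\phi$ vanishes near $x=\delta$. My plan is to expand $(x+a)^2=x^2+2ax+a^2$, decoupling the stationary-phase analysis from the algebraic prefactor, and to handle the three resulting pieces by a combination of integration by parts and a dyadic split at the natural scale $|x|=r^{-1/2}$.

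\textbf{The three pieces.} Write $I_2^B(r)=K_2(r)+2a\,K_1(r)+a^2\,K_0(r)$ where
\[
K_m(r):=\int_{-a}^{\delta}x^m\phi(x)\,e^{\i r x^2}\,dx,\qquad m=0,1,2.
\]
For $m=1,2$ the identity $x\,e^{\i r x^2}\,dx=(2\i r)^{-1}\,de^{\i r x^2}$ is available, so a single integration by parts trades an $x$-factor for a $1/r$-factor. The boundary contributions thus produced are nonzero only at $x=-a$ (since $\phi$ vanishes near $\delta$) and are $O(r^{-1})$, while the remaining interior integrals are $K_0$-type expressions against a smooth modification of the density. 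Granting the bound $|K_0(r)|\le Cr^{-1/2}$ to be established next, this yields $K_1(r),K_2(r)=O(r^{-3/2})$, so the full integral is dominated by the $a^2K_0$ piece.

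\textbf{The leading piece $K_0$, and the main obstacle.} The crux is to prove $|K_0(r)|\le Cr^{-1/2}$. Because the stationary point sits in the interior of $[-a,\delta]$, the purely one-sided integration-by-parts strategy used for the Class A integrals is unavailable. I will split $K_0(r)$ at the stationary-phase scale $|x|=r^{-1/2}$. On the inner region $[-r^{-1/2},r^{-1/2}]$ the trivial bound $2r^{-1/2}\|\phi\|_\infty$ already suffices. On each of the outer regions $[r^{-1/2},\delta]$ and $[-a,-r^{-1/2}]$, the identity $e^{\i r x^2}=(2\i r x)^{-1}\,de^{\i r x^2}$ followed by integration by parts produces three contributions: a boundary term at $x=\pm r^{-1/2}$ of size $r^{-1}\cdot\sqrt{r}=r^{-1/2}$; a boundary term at $x=-a$ (the one at $x=\delta$ vanishes) of size $O(r^{-1})$; and a remainder bounded by $r^{-1}\int(|\phi'/x|+|\phi/x^2|)\,dx$, which is $O(r^{-1/2})$ thanks to $\int_{r^{-1/2}}^{\delta}x^{-2}\,dx=O(\sqrt{r})$ and $\int_{r^{-1/2}}^\delta |x|^{-1}\,dx=O(\log r)$. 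Summing gives $|K_0(r)|\le Cr^{-1/2}$, and reassembling the three pieces completes the proof. The essential difficulty is thus the interior location of the stationary point, which forces a split precisely at the scale $|x|\sim r^{-1/2}$ in order to balance the trivial-bound contribution against the integration-by-parts contribution.
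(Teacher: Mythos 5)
Your proof is essentially correct and uses the same core machinery as the paper: a dyadic split at the stationary-phase scale $|x|\sim r^{-1/2}$, a trivial bound on the inner interval, and integration by parts via $e^{\i r x^2}=(2\i r x)^{-1}\,d e^{\i r x^2}$ on the outer intervals. The genuine variation is your preliminary binomial expansion $(x+a)^2=x^2+2ax+a^2$: this lets you handle $K_1$ and $K_2$ by a \emph{nonsingular} integration by parts ($x\,e^{\i r x^2}\,dx=(2\i r)^{-1}d e^{\i r x^2}$), so only $K_0$ requires the split. The paper instead keeps the amplitude $(x+a)^2\phi$ intact and splits the full integral at $\pm r^{-1/2}$; the payoff there is that the boundary term at $x=-a$ vanishes automatically because $(x+a)^2$ has a double zero. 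Your expansion trades away that cancellation and produces nonzero boundary contributions at $-a$, but they are harmless.

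Two small points. First, an arithmetic slip: since the boundary term of $K_1$ and $K_2$ at $x=-a$ is $O(r^{-1})$, you only get $K_1,K_2=O(r^{-1})$, not $O(r^{-3/2})$ as you state; fortunately $O(r^{-1})$ is still smaller than $O(r^{-1/2})$, so the conclusion survives. Second, the constant is intended to be uniform in $a$ (this matters in the application, where $a=\sin(\theta_*/2)$ ranges over a neighbourhood of $0$); you never address this. The paper gets uniformity for free from the double zero at $-a$. In your $K_0$ analysis, the boundary term at $-a$ is $O\bigl((ra)^{-1}\bigr)$, which at first glance blows up as $a\to 0$; the rescue is that the left outer interval $[-a,-r^{-1/2}]$ is empty unless $a>r^{-1/2}$, in which case $(ra)^{-1}<r^{-1/2}$. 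This should be said explicitly, since it is exactly the place where the paper's choice of keeping $(x+a)^2$ pays off.
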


\begin{proof}Since $0\in\left[-a,r^{-1/2}\right]$, we can not directly carry out integral by parts since 
\[
e^{\i r x^2}\d x=\frac{1}{2\i r x}\d e^{\i r x^2}
\]
has a singularity at $0$. To avoid the singularity at $0$, we split the integral \eqref{eq:intb2} into three parts:
\[
I^B_2(r):=\int_{-a}^{-r^{-1/2}}+\int_{-r^{-1/2}}^{r^{-1/2}}+\int_{r^{-1/2}}^\delta (x+a)^2\phi(x)e^{\i r x^2}\d x:=(I)+(II)+(III).
\]
Note that when $a<r^{-1/2}$, then (I) vanishes and the integral interval of (II) is modified as $[-a,r^{-1/2}]$. For simplicity, let $\epsilon_0=\min\{a,r^{-1/2}\}$ then $\epsilon_0\leq r^{-1/2}$. For (II), from direct computation,
\[
|(II)|\leq\int_{-\epsilon_0}^{r^{-1/2}}(x+a)^2|\phi(x)|\d x\leq C\left(\epsilon_0+r^{-1/2}\right)\leq Cr^{-1/2}.
\]
Thus (II) decays at the rate of $r^{-1/2}$ when either $a<r^{-1/2}$ or $a\geq r^{-1/2}$.

Then move on to $(III)$. Since $0\notin[r^{-1/2},\delta]$, with integration by parts,
\begin{align*}
\left|(III)\right|&=\frac{1}{2r}\left|\int_{r^{-1/2}}^\delta \frac{(x+a)^2}{x}\phi(x)\d e^{\i r x^2}\right|\\
&\leq\frac{1}{2r}\left|\frac{(x+a)^2}{x}\phi(x)\Big|^\delta_{r^{-1/2}}\right|+ \frac{1}{r}\left|\int_{r^{-1/2}}^\delta \frac{x+a}{x^2}\psi(x) e^{\i r x^2}\d x\right|\\
&\leq Cr^{-1/2}+Cr^{-1}\int_{r^{-1/2}}^\delta\frac{1}{x^2}\d x\leq Cr^{-1/2}.
\end{align*}
Note that here
\[
\psi(x)=(x-a)\phi(x)+x(x+a)\phi'(x)\in C^\infty(\R).
\]
The analysis for (I) is exactly the same as (III) thus is omitted here.
Based on the analysis of (I), (II) and (III), finally we have
\[
|I^B_2(r)|\leq Cr^{-1/2}.
\]
\end{proof}

\subsection{Analyse of the integrals of Class C}
\label{sec:intg3}

Now we move on to a pair of integrals depends on $r$ and $a$, which is a parameter close to $0$. Without loss of generality, we only prove in this subsection the case that $0\leq a<<1$. In this paper, these two integrals are named {\bf the first and second integrals of Class C}:
\begin{align}
\label{eq:intc1}
&I_1^C(r)=\int_{\gamma_1}^{\gamma_2} \frac{1}{\sqrt{x-a}}\phi(x)e^{\i r x^2}\d x;\\
\label{eq:intc2}
&I_2^C(r)=\int_{\gamma_1}^{\gamma_2} {\sqrt{x-a}}\phi(x)e^{\i r x^2}\ d x.
\end{align}
Here we require that $\gamma_1<0<a<\gamma_2$, and similar as before, $\phi\in C_0^\infty(\gamma_1,\gamma_2)$.

\begin{theorem}
\label{th:intc1}
Suppose $\phi\in C_0^\infty(\gamma_1,\gamma_2)$, where $\gamma_1<a<\gamma_2$ with $|a|<1$. Then there is a constant which only depends on $\phi$ such that
\begin{equation}
\label{eq:decay_intc1}
\left|I_1^C(r)\right|\leq C|a|^{-1} r^{-1/2},
\end{equation}
which is equivalent to 
\begin{equation}
\label{eq:decay_intc1_v2}
\left|a\,I_1^C(r)\right|\leq C r^{-1/2}.
\end{equation}
\end{theorem}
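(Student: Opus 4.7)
The plan is to reduce the claim to the bound on $I_2^C(r)$ (stated in Table~\ref{tb:integral} with decay rate $O(r^{-1/2})$) via the algebraic identity $\frac{a}{\sqrt{x-a}}=\frac{x}{\sqrt{x-a}}-\sqrt{x-a}$, obtained from $a=x-(x-a)$. Multiplying $I_1^C(r)$ by $a$ and using this identity,
$$a\,I_1^C(r)=\int_{\gamma_1}^{\gamma_2}\frac{x\,\phi(x)}{\sqrt{x-a}}e^{\i r x^2}\d x-I_2^C(r).$$
Since $|I_2^C(r)|\le Cr^{-1/2}$, it suffices to bound the first integral, call it $A(r)$, by $Cr^{-1/2}$.

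For $A(r)$, the phase-derivative relation $x\,e^{\i r x^2}=\frac{1}{2\i r}(e^{\i r x^2})'$ invites integration by parts. A naive IBP against $(\phi(x)/\sqrt{x-a})'$ would produce a nonintegrable $(x-a)^{-3/2}$ term, so instead I would pair $\d x/\sqrt{x-a}$ with $\d(2\sqrt{x-a})$. The boundary terms vanish because $\phi$ has compact support in $(\gamma_1,\gamma_2)$, and one is left with an $I_2^C$-type piece (amplitude $\phi+x\phi'$) of order $r^{-1/2}$ plus a residual term carrying a factor of $r$. The latter is controlled by expanding $x^2=(x-a)^2+2a(x-a)+a^2$: each extra factor of $\sqrt{x-a}$ permits a further IBP that trades it for a gain of $r^{-1/2}$, while the purely algebraic piece reduces back to $I_2^C$.

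An alternative and arguably cleaner route, which I would pursue in parallel to handle the regime of very small $a$, is the substitution $u=x-a$, after first isolating $\phi(a)$ by writing $\phi(x)=\phi(a)\chi(x)+(x-a)\tilde\psi(x)$ with $\chi$ a smooth cutoff at $a$ and $\tilde\psi\in C_0^\infty(\gamma_1,\gamma_2)$. The $(x-a)\tilde\psi$ piece is immediately of type $I_2^C$. The $\phi(a)\chi$ piece becomes, after substitution,
$$\phi(a)\,e^{\i r a^2}\int u^{-1/2}\chi(u+a)\,e^{\i r u^2+2\i r au}\d u,$$
whose phase $r(u+a)^2-ra^2$ is stationary at $u=-a$, just outside the domain of integration. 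A Fresnel-type rescaling $s=\sqrt{r}(u+a)$ (cf.\ Appendix~A) brings the integral to the scale-invariant form $\int s^{-1/2}\dots$ whose bound via $S(t),C(t)$ produces exactly the $|a|^{-1}r^{-1/2}$ rate.

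\textbf{The main obstacle} is precisely the interaction between the endpoint singularity at $x=a$ and the phase stationary point at $x=0$: when $|a|$ is small, these features nearly coalesce and the non-stationary-phase estimate degenerates, which is why the $|a|^{-1}$ prefactor is sharp at the crossover scale $|a|\sim r^{-1/2}$. The proof must track this loss carefully, either through iterated IBP on a recursion of $A(r)$ against itself, or through a case split in the Fresnel substitution between $|a|\sqrt{r}\lesssim 1$ (where one bounds $I_1^C$ directly by $\int u^{-1/2}\d u=O(1)\leq C|a|^{-1}r^{-1/2}$) and $|a|\sqrt{r}\gg 1$ (where the non-stationary phase gains the advertised rate).
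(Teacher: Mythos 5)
The central step of your first route is circular: you invoke the Table~\ref{tb:integral} bound $|I_2^C(r)|\leq Cr^{-1/2}$, but in the paper that bound is Corollary~\ref{cr:decay_intc2}, whose proof \emph{uses} the very theorem you are trying to establish. Indeed your identity $a\,I_1^C(r)=A(r)-I_2^C(r)$ with $A(r):=\int x\phi(x)(x-a)^{-1/2}e^{\i r x^2}\d x$ is precisely the rearrangement of the identity $I_2^C(r)=A(r)-a\,I_1^C(r)$ that the paper uses to \emph{derive} $I_2^C$ from $I_1^C$; inverting the dependency requires an independent proof of the $I_2^C$ bound, which you would then have to supply and which involves essentially the same coalescence issue between the singularity at $x=a$ and the stationary point at $x=0$. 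The same circularity reappears in your second route, where the $(x-a)\tilde\psi$ term is again an $I_2^C$-type integral.

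Your second route also contains an inaccurate step. After $u=x-a$ and $s=\sqrt{r}(u+a)$, the amplitude $u^{-1/2}$ transforms to $r^{1/4}(s-a\sqrt{r})^{-1/2}$, not to $s^{-1/2}$: the rescaled integral still carries an $I_1^C$-type singularity, at the shifted location $s=a\sqrt{r}$, and the cutoff is supported there as well, so the problem has not been reduced to a standard Fresnel integral. What \emph{is} correct, and matches the paper's structure, is your final case split at the crossover scale $|a|\sqrt{r}\sim 1$: for $|a|\lesssim r^{-1/2}$ the trivial $L^1$ bound on $(x-a)^{-1/2}$ already gives $|I_1^C(r)|\leq C\leq C|a|^{-1}r^{-1/2}$, and for $|a|\gtrsim r^{-1/2}$ one must exploit the separation of the two features. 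The paper carries this out directly, splitting the domain into three (small-$a$) or five (large-$a$) intervals centered at the two trouble points $0$ and $a$, integrating by parts on the regular pieces and estimating the two excised neighborhoods by direct majorization, tracking the $a$-dependence of every boundary term and integral at a granularity your sketch does not reach. As written your proposal conveys the right heuristics but does not close; to repair route one you would need a self-contained proof of the $I_2^C$ bound (roughly the same work as the paper's direct attack on $I_1^C$), and to repair route two you would need to correctly carry the offset $a\sqrt{r}$ through the Fresnel rescaling and then still deal with the resulting singular oscillatory integral.
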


\begin{proof}Without loss of generality, we assume that $0\leq a <1$. Note that similarly to the proof of Theorem \ref{th:decay_intb2}, the point $0$ becomes a singularity when we apply integration by parts to the term $e^{\i r x^2}\d x$. From the definition \eqref{eq:intc1}, $a$ is another singularity. To avoid these singularities,  We  split the integral $I_1^C(r)$ into five or three parts, according to different situations. When $a\geq 2r^{-1/2}$, then $a-1/r> 2r^{-1/2}-r^{-1/2}=r^{-1/2}$. Then 
\begin{align}
I_1^C(r)&=\int_{\gamma_1}^{-r^{-1/2}}+\int_{-r^{-1/2}}^{r^{-1/2}}+\int_{r^{-1/2}}^{a-1/r}+\int_{a-1/r}^{a+1/r}+\int_{a+1/r}^{\gamma_2}\frac{1}{\sqrt{x-a}}\phi(x)e^{\i r x^2}\d x\nonumber\\\label{eq:c11}&:=(I_1)+(II_1)+(I_2)+(II_2)+(I_3).
\end{align}
Note that the integrals $(II_1)$ and $(II_2)$ involve singularities thus are treated particularly. 
When $a<2r^{-1/2}$, then $a+1/r<2r^{-1/2}+r^{-1/2}<4r^{-1/2}$. Let
\begin{equation}
\label{eq:c12}
I_1^C(r)=\int_{\gamma_1}^{-r^{-1/2}}+\int_{-r^{-1/2}}^{4r^{-1/2}}+\int_{4r^{-1/2}}^{\gamma_2}\frac{1}{\sqrt{x-a}}\phi(x)e^{\i r x^2}\d x:=(I_1)+(II_3)+(I_4).
\end{equation}
Similarly, $(II_3)$ involves the singularities. 

In the following proof, we will consider the two cases, i.e., $a\geq 2r^{-1/2}$ and $0\leq a<2r^{-1/2}$, seperately. We start from the first case.\\

\noindent
(i) Case I: $a\geq 2r^{-1/2}$, where $I^C_1(r)$ is separated into 5 parts \eqref{eq:c11}. We need to estimate two terms $(II_1)$  and $(II_2)$ which involve singularities, and the rest of the three terms which don't.

First observe that since $a\geq 2r^{-1/2}$, from direct computation,
\[
\left|(II_1)\right|\leq C\int_{-r^{-1/2}}^{r^{-1/2}}\frac{1}{\sqrt{a-x}}\d x\leq\frac{2Cr^{-1/2}}{\sqrt{a-r^{-1/2}}}=Ca^{-1/2}r^{-1/2},
\]
and
\[
\left|(II_2)\right|\leq C\int_{a-1/r}^{a+1/r}\frac{1}{\sqrt{|x-a|}}\d x\leq Cr^{-1/2}.
\]

Then we move on to the integrals $(I_1)$, $(I_2)$ and $(I_3)$. First define the following intervals:
\[
K_1=[\gamma_1,-r^{-1/2}],\,K_2=[r^{-1/2},a-r^{-1}],\,K_3=[a+r^{-1},\gamma_2].
\]
 Since the integrand in these three integrals are well-defined and we are safe to apply integration by parts, 
\begin{align*}
(I_{\ell})&=\frac{1}{2\i r }\int_{K_\ell}\frac{1}{x\sqrt{x-a}}\phi(x)\d e^{\i r x^2}\\
&=\frac{1}{2\i r}\frac{1}{x\sqrt{x-a}}\phi(x)e^{\i r x}\Big|^+_-+\frac{1}{2\i r}\int_{K_\ell}\left(\frac{\phi(x)}{x^2\sqrt{x-a}}+\frac{\psi(x)}{x(x-a)^{3/2}}\right)e^{\i r x^2}\ d x\\&:=\frac{1}{2\i r}\frac{1}{x\sqrt{x-a}}\phi(x)e^{\i r x}\Big|^+_-+\frac{1}{2\i r}(i_\ell)
\end{align*}
where
\[
\psi(x)=\frac{\phi( x)}{2}-(x-a)\phi'(x)\in C^\infty_0(\gamma_1,\gamma_2)
\]
and
\[
f(x)\Big|^+_-=f(\sigma_+^{(\ell)})-f(\sigma_-^{\ell})\text{ where }K_\ell=[\sigma_-^{(\ell)},\sigma_+^{(\ell)}]\text{ for }\ell=1,2,3.
\]
Since 
\[
\left|(I_\ell)\right|\leq \frac{1}{2r}\left|\left.\frac{1}{x\sqrt{x-a}}\phi(x)\right|_{x=\sigma^{(\ell)}_+}\right|+\frac{1}{2r}\left|\left.\frac{1}{x\sqrt{x-a}}\phi(x)\right|_{x=\sigma^{(\ell)}_-}\right|+\frac{1}{2r}|(i_\ell)|,
\]
we need to estimate the values $\left|\frac{1}{x\sqrt{x-a}}\phi(x)\right|$ at the boundaries of $K_\ell$, $\ell=1,2,3$, as well as the integral $|(i_\ell)|$. Note that since $\phi(\gamma_1)=\phi(\gamma_2)=0$, we only need to consider the points $x=\pm r^{-1/2},\,a\pm r^{-1}$.  Since $a\geq 2r^{-1/2}$, $a\pm r^{-1/2}\geq a-r^{-1/2}\geq a/2$ thus
\[
\left|\left.\frac{1}{x\sqrt{x-a}}\right|_{x= \pm r^{-1/2}}\right|
\leq \frac{\sqrt{r}}{\sqrt{a-r^{-1/2}}}\leq C\sqrt{\frac{r}{a}};
\]
and since $a\pm r^{-1}\geq a-r^{-1}\geq a-a^2/4>a/2$,
\[
\left|\left.\frac{1}{x\sqrt{x-a}}\right|_{x=a\pm r^{-1}}\right|\leq \frac{\sqrt{r}}{a-1/r}\leq C\frac{\sqrt{r}}{a}.
\]
At last we need to estimate the integral $(i_\ell)$. We need to discuss for each interval seperately.
For the integral on $K_1=[\gamma_1,-r^{-1/2}]$, 
\begin{align*}
\left|(i_1)\right|\leq& C\int_{r^{-1/2}}^{-\gamma_1}\frac{1}{x^2\sqrt{x+a}}\d x+C\int_{r^{-1/2}}^{-\gamma_1}\frac{1}{x(x+a)^{3/2}}\d x\\
\leq &C\frac{1}{\sqrt{a}}\int_{r^{-1/2}}^{-\gamma_1}\frac{1}{x^2}\d x+C\sqrt{r}\int_{r^{-1/2}}^{-\gamma_1}\frac{1}{\sqrt{x+a}^3}\d x\\
= &- C\frac{1}{\sqrt{a}}\frac{1}{x}\Big|^{-\gamma_1}_{r^{-1/2}}-C\sqrt{r}\frac{1}{\sqrt{x+a}}\Big|^{-\gamma_1}_{r^{-1/2}}\leq C\sqrt{\frac{r}{a}}.
\end{align*}
For $K_2=[r^{-1/2},a-r^{-1}]$, we need to again slit each integral into two parts:
\begin{align*}
\left|(i_2)\right|\leq&C\int_{r^{-1/2}}^{a-1/r}\frac{1}{x^2\sqrt{a-x}}\d x+C\int_{r^{-1/2}}^{a-1/r}\frac{1}{x(a-x)^{3/2}}\d x\\
=&C\left(\int_{r^{-1/2}}^{a/2}+\int_{a/2}^{a-1/r}\right)\frac{1}{x^2\sqrt{a-x}}\d x+C\left(\int_{r^{-1/2}}^{a/2}+\int_{a/2}^{a-1/r}\right)\frac{1}{x(a-x)^{3/2}}\d x.
\end{align*}
We estimate the above four integrals in the following. Recall that $a\geq 2r^{-1/2}$,
\[
\left|\int_{r^{-1/2}}^{a/2}\frac{1}{x^2\sqrt{a-x}}\d x\right|
\leq C \frac{1}{\sqrt{a/2}}\int_{r^{-1/2}}^{a/2}\frac{1}{x^2}\d x=-C\frac{1}{\sqrt{a/2}}\frac{1}{x}\Big|^{a/2}_{r^{-1/2}}\leq C\sqrt{\frac{r}{a}};
\]
\[
\left|\int_{r^{-1/2}}^{a/2}\frac{1}{x\sqrt{a-x}^3}\d x\right|\leq C\sqrt{r}\int_{r^{-1/2}}^{a/2}\frac{1}{\sqrt{a-x}^3}\d x= C{\sqrt{r}}\frac{1}{\sqrt{a-x}}\Big|^{a/2}_{r^{-1/2}}\leq C\sqrt{\frac{r}{a}};
\]
\[
\left|\int_{a/2}^{a-1/r}\frac{1}{x^2\sqrt{a-x}}\d x\right|\leq C\sqrt{r}\int_{a/2}^{a-1/r}\frac{1}{x^2}\d x=-C\sqrt{r}\frac{1}{x}\Big|^{a-1/r}_{a/2}\leq C\frac{\sqrt{r}}{a};
\]
\[
\left|\int_{a/2}^{a-1/r}\frac{1}{x\sqrt{a-x}^3}\d x\right|\leq C\frac{2}{a}\int_{a/2}^{a-1/r}\frac{1}{\sqrt{a-x}^3}\d x=C\frac{1}{a}\frac{1}{\sqrt{a-x}}\Big|^{a-1/r}_{a/2}\leq C\frac{\sqrt{r}}{a}.
\]
For $K_3=[a+1/r,\gamma_2]$,
\begin{align*}
\left|(i_3)\right|\leq& C\int_{a+1/r}^{\gamma_2}\frac{1}{x^2\sqrt{x-a}}\d x+C\int_{a+1/r}^{\gamma_2}\frac{1}{x(x-a)^{3/2}}\d x\\\leq &C\sqrt{r}\int_{a+1/r}^{\gamma_2}\frac{1}{x^2}\d x+C\frac{1}{a}\int_{a+1/r}^{\gamma_2}\frac{1}{\sqrt{x-a}^3}\d x\\
=&-C\sqrt{r}\frac{1}{x}\Big|^{\gamma_2}_{a+1/r}+C\frac{2}{a}\frac{1}{\sqrt{x-a}}\Big|^{\gamma_2}_{a+1/r}\leq C\frac{\sqrt{r}}{a}.
\end{align*}
Thus altoghter, there is a constant $C>0$ such that
\[
|(i_2)|\leq C\frac{\sqrt{r}}{a}
\]
when $a\geq 2r^{-1/2}$. Thus combine with previous estimations we have
\[
|(I_\ell)|\leq Ca^{-1}r^{-1/2}\quad\Rightarrow\quad \left|I^C_1(r)\right|\leq Ca^{-1}r^{-1/2}.
\]
The first case is then proved.\\

\noindent
(ii) Case II: $0\leq a<2r^{-1/2}$, when $I^C_1(r)$ is given by \eqref{eq:c12}. Similarly, we need to consider the term $(II_3)$ which involves the singularities, and $(I_1)$ and $(I_4)$ which are regular. Notice that for $(I_1)$, it has been discussed in Case I and the estimation still holds for $a<2r^{-1/2}$, thus we don't need to discuss here. Thus we only need to estimate the terms $(II_3)$ and $(I_4)$.

For $(II_3)$, from direct computation,
\begin{align*}
\left|(II_3)\right|&\leq \int_{-r^{-1/2}}^{4r^{-1/2}}\frac{1}{\sqrt{|x-a|}}\d x=\int_{-r^{-1/2}}^a \frac{1}{\sqrt{a-x}}\d x+\int_a^{4r^{-1/2}}\frac{1}{\sqrt{x-a}}\d x\\
&=2\sqrt{a+r^{-1/2}}+2\sqrt{4r^{-1/2}-a}\leq Cr^{-1/4}.
\end{align*}
Note that since $a<2r^{-1/2}$, we estimate that  $r^{1/4}<\sqrt{{2}}a^{-1/2}$ then
\[
|(II_3)|\leq Cr^{-1/2}r^{1/4}\leq Ca^{-1/2}r^{-1/2}.
\]

To estimate $(I_4)$, we define the following interval similary
\[
K_4=[4r^{-1/2},\gamma_2].
\]
With integration by parts, since $\phi(\gamma_2)=0$,
\begin{align*}
(I_4)&=\frac{1}{2\i r }\int_{K_4}\frac{1}{x\sqrt{x-a}}\phi(x)\d e^{\i r x^2}\\
&=\frac{1}{2\i r}\frac{1}{x\sqrt{x-a}}\phi(x)e^{\i r x}\Big|^+_-+\frac{1}{2\i r}\int_{K_4}\left(\frac{\phi(x)}{x^2\sqrt{x-a}}+\frac{\psi(x)}{x(x-a)^{3/2}}\right)e^{\i r x^2}\ d x\\&:=\frac{1}{2\i r}\frac{1}{x\sqrt{x-a}}\phi(x)e^{\i r x}\Big|_{x=4r^{-1/2}}+\frac{1}{2\i r}(i_4)
\end{align*}
From direct computation, since $a<2r^{-1/2}$, $4r^{-1/2}-a>a$ then
\[
\left|\left.\frac{1}{x\sqrt{x-a}}\right|_{x=4r^{-1/2}}\right|=\frac{\sqrt{r}}{4\sqrt{4r^{-1/2}-a}}\leq\frac{1}{4}\sqrt{\frac{r}{a}}.
\]

Now we move on to the integral $(i_4)$. Similar to Case I,
\begin{align*}
\left|(i_4)\right|\leq& C\int_{4r^{-1/2}}^{\gamma_2}\frac{1}{x^2\sqrt{x-a}}\d x+C\int_{4r^{-1/2}}^{\gamma_2}\frac{1}{x(x-a)^{3/2}}\d x\\\leq &C\frac{1}{\sqrt{a}}\int_{4r^{-1/2}}^{\gamma_2}\frac{1}{x^2}\d x+C\sqrt{r}\int_{4r^{-1/2}}^{\gamma_2}\frac{1}{\sqrt{x-a}^3}\d x\\
=&-C\frac{1}{\sqrt{a}}\frac{1}{x}\Big|^{\gamma_2}_{4r^{-1/2}}+C\sqrt{r}\frac{1}{\sqrt{x-a}}\Big|^{\gamma_2}_{4r^{-1/2}}\leq C\sqrt{\frac{r}{a}}.
\end{align*}
Similar to Case I, we can also estimate that
\[
\left|I^C_1(r)\right|\leq C\frac{1}{a\sqrt{r}}
\]
holds uniformly for $r$ and $0<a<1$. Then we can get the estimation \eqref{eq:decay_intc1_v2} immediately.
\end{proof}

As the corollary from Theorem \ref{th:intc1}, we estimate {\bf the second integral of Class C}, i.e., $I^C_2(r)$.

\begin{corollary}
\label{cr:decay_intc2}
Let $\phi\in C_0^\infty(\gamma_1,\gamma_2)$. Then for any $|a|<1$, there is a constant $C>0$ which only depends on $\phi$ such that
\begin{equation}
\label{eq:decay_intc2}
\left|I_2^C(r)\right|\leq Cr^{-1/2}.
\end{equation}
\end{corollary}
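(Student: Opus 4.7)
The plan is to follow the blueprint of the proof of Theorem \ref{th:intc1}, exploiting the fact that the integrand of $I_2^C(r)$ is markedly less singular than that of $I_1^C(r)$. Indeed, the amplitude $\sqrt{x-a}\,\phi(x)$ is uniformly bounded on $[\gamma_1,\gamma_2]$ by a constant depending only on $\phi$ (independent of $|a|<1$) and in fact vanishes at $x=a$ like $\sqrt{|x-a|}$, whereas $\phi(x)/\sqrt{x-a}$ in $I_1^C$ exhibits a genuine integrable singularity there. This is precisely what prevents the $|a|^{-1}$ blow-up factor from appearing in the bound for $I_2^C$.

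I would split into two cases according to whether the non-smoothness at $x=a$ is separated from the stationary point $x=0$ on the natural scale $r^{-1/2}$. In Case I ($|a|\geq 2r^{-1/2}$), partition $[\gamma_1,\gamma_2]$ at the points $-r^{-1/2}$, $r^{-1/2}$, $a-r^{-1}$, $a+r^{-1}$; on the short ``bad'' subintervals $(-r^{-1/2},r^{-1/2})$ and $(a-r^{-1},a+r^{-1})$, direct estimates using $|\sqrt{x-a}|\leq C$ and $|\sqrt{x-a}|\leq r^{-1/2}$ respectively yield contributions of sizes $Cr^{-1/2}$ and $Cr^{-3/2}$. In Case II ($|a|<2r^{-1/2}$), partition at $-r^{-1/2}$ and $4r^{-1/2}$; on the middle interval the bound $|\sqrt{x-a}|\leq Cr^{-1/4}$ together with length $5r^{-1/2}$ gives a contribution of size $Cr^{-3/4}$, which is $O(r^{-1/2})$ for large $r$. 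On every remaining ``regular'' piece the interval is bounded away from both $0$ and $a$, and I would integrate by parts using $e^{\i r x^2}\,\mathrm{d}x=(2\i rx)^{-1}\,\mathrm{d}e^{\i r x^2}$. The derivative of $\sqrt{x-a}\,\phi(x)/x$ produces three terms $\phi(x)/(2x\sqrt{x-a})$, $\sqrt{x-a}\,\phi'(x)/x$, and $-\sqrt{x-a}\,\phi(x)/x^2$, each integrable on the relevant region; combined with the $r^{-1}$ prefactor from the integration by parts, they give $O(r^{-1/2})$ overall, and the boundary contributions at the splitting points behave similarly.

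The main obstacle will be the careful bookkeeping in Case I, where the middle regular piece $(r^{-1/2},a-r^{-1})$ must be further subdivided at $x=a/2$ (exactly as in the proof of Theorem \ref{th:intc1}) in order to separate the regime in which $|x-a|$ is comparable to $|a|$ from the regime in which it is comparable to $|a|-|x|$; this is what controls the contribution from the first derivative term $\phi(x)/(2x\sqrt{x-a})$. Throughout, the calculations closely mirror those of Theorem \ref{th:intc1}, but with $\sqrt{x-a}$ replacing $1/\sqrt{x-a}$, which removes a factor of $|x-a|$ from every denominator and thereby eliminates the $|a|^{-1}$ blow-up in the final bound.
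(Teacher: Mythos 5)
Your proposal is correct, but it takes a genuinely different (and more laborious) route than the paper. You redo the entire $5$-piece / $3$-piece decomposition of Theorem \ref{th:intc1} for $I_2^C$ directly, integrating by parts against $e^{\i r x^2}\,\mathrm{d}x = (2\i r x)^{-1}\,\mathrm{d} e^{\i r x^2}$ on the regular pieces and bounding the bad subintervals near $x=0$ and $x=a$ by crude length estimates. The bookkeeping you sketch does close: each derivative term $\phi(x)/(2x\sqrt{x-a})$, $\sqrt{x-a}\,\phi'(x)/x$, $\sqrt{x-a}\,\phi(x)/x^2$ integrates to $O(\sqrt{r})$ on the regular pieces because $1/|x|\leq\sqrt{r}$ and $\int |x-a|^{-1/2}\,\mathrm{d}x$ is bounded uniformly in $a$, and all boundary terms at the splitting points are $O(\sqrt{r})$; with the $r^{-1}$ prefactor this is $O(r^{-1/2})$. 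The paper instead exploits the elementary identity $\sqrt{x-a}=\frac{x}{\sqrt{x-a}}-\frac{a}{\sqrt{x-a}}$, so that $I_2^C(r)=(I)-a\,I_1^C(r)$: the second term is disposed of at once by the already-proved bound \eqref{eq:decay_intc1_v2}, while the prefactor $x$ in $(I)$ exactly cancels the $1/x$ that integration by parts against $e^{\i r x^2}$ would otherwise produce, so $(I)$ has no singularity at $x=0$ at all and only a single $3$-piece split around $x=a$ is required. What the paper's reduction buys is brevity and a clean reuse of Theorem \ref{th:intc1}; what your direct approach buys is self-containedness, at the cost of duplicating the two-case, multi-interval machinery. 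One small cosmetic remark: in your Case I the boundary terms at $a\pm r^{-1}$ are in fact $O(1)$ rather than $O(\sqrt{r})$ because of the $\sqrt{|x-a|}=r^{-1/2}$ factor, which only helps you, and the $\phi'(x)\sqrt{x-a}/x$ term gives a logarithmic integral $O(\ln r)$ rather than $O(\sqrt r)$, again harmless but worth noting when you write it up carefully.
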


\begin{proof}

From direct computation,
\begin{align*}
I_2^C(r)&=\int_{\gamma_1}^{\gamma_2} {\sqrt{x-a}}\phi(x)e^{\i r x^2}\ d x\\&=\int_{\gamma_1}^{\gamma_2} \frac{x}{\sqrt{x-a}}\phi(x)e^{\i r x^2}\ d x-a\int_{\gamma_1}^{\gamma_2} \frac{1}{\sqrt{x-a}}\phi(x)e^{\i r x^2}\ d x:=(I)-a I_1^C(r).
\end{align*}
From \eqref{eq:decay_intc1_v2}, $|a I_1^C(r)|\leq Cr^{-1/2}$, we only need to consider the integral (I). First split the integral into three parts due to the singularity $a$:
\[
(I)=\int_{\gamma_1}^{a-1/r}+\int_{a-1/r}^{a+1/r}+\int_{a+1/r}^{\gamma_2}\frac{x}{\sqrt{x-a}}\phi(x)e^{\i r x^2}\d x:=(i_1)+(ii)+(i_2).
\]
From direct computation,
\begin{align*}
|(ii)|\leq C\int_{a-1/r}^{a+1/r}\frac{1}{\sqrt{|x-a|}}\d x
=C\int_a^{a+1/r}\frac{1}{\sqrt{x-a}}\d x+C\int_{a-1/r}^a\frac{1}{\sqrt{a-x}}\d x=Cr^{-1/2}.
\end{align*}
For $(i_1)$, with integration by parts,
\begin{align*}
(i_1)&=\frac{1}{2\i r}\int_{\gamma_1}^{a-1/r}\frac{1}{\sqrt{x-a}}\phi(x)\d e^{\i r x^2}\\
&=\frac{1}{2\i r}\frac{1}{\sqrt{x-a}}\phi(x)\Big|^{a-1/r}_{\gamma_1}-\frac{1}{2\i r}\int_{\gamma_1}^{a-1/r}\frac{1}{(x-a)^{3/2}}\psi(x) e^{\i r x^2}\d x,
\end{align*} 
where
\[
\psi(x)=\frac{1}{2}\phi(x)-(x-a)\phi'(x)\in C^\infty_0(\R).
\]
Since $\phi(\gamma_1)=0$,
\[
\left|\frac{1}{2\i r}\frac{1}{\sqrt{x-a}}\phi(x)\Big|^{a-1/r}_{\gamma_1}\right|\leq Cr^{-1/2},
\]
and
\[
\left|\int_{\gamma_1}^{a-1/r}\frac{1}{(x-a)^{3/2}}\psi(x) e^{\i r x^2}\d x\right|\leq\int_{\gamma_1}^{a-1/r}\frac{1}{(x-a)^{3/2}}\d x\leq C\sqrt{r}.
\]
Thus we can easily obtain that
\[
|(i_1)|\leq Cr^{-1/2}.
\]
With similar technique, we can also get
\[
|(i_2)|\leq Cr^{-1/2}.
\]
Together with the estimation of (ii), 
\[
|(I)|\leq Cr^{-1/2}.
\]
The proof is then finished.
\end{proof}

\subsection{Analyse of the integrals of Class D}
\label{sec:intg4}

{\bf The first integral of Class D} is defined as
\begin{equation}
\label{eq:intd1}
I^D_1(r):=\int_{\gamma_1}^{\gamma_2}|x-a|\phi(x)e^{\i r x^2}\d x
\end{equation}
where $a\in[L_1,L_2]$ in a fixed bounded set. We first study the asymptotic behaviour of this integral when $r\rightarrow\infty$.

\begin{theorem}
\label{th:intd1}
When $\phi\in C_0^\infty(\gamma_1,\gamma_2)$ and $a\in[L_1,L_2]$, there is a constant $C>0$ only depends on $\phi$ such that
\begin{equation}
\label{eq:decay_intd1}
\left|I^D_1(r)\right|\leq Cr^{-1/2}.
\end{equation}
\end{theorem}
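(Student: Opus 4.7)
The strategy is to remove the $C^1$-failure at $x=a$ by splitting the integral at that point, turning each sub-integral into a standard oscillatory integral whose amplitude is smooth and vanishes at both endpoints; the only remaining issue, the stationary point of the phase $x^2$ at $x=0$, can then be treated with the same small-neighbourhood device already used in the proof of Theorem \ref{th:decay_intb2}.

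First I would write
\[
I^D_1(r)=\int_{\gamma_1}^{a}(a-x)\phi(x)e^{\i r x^2}\d x+\int_{a}^{\gamma_2}(x-a)\phi(x)e^{\i r x^2}\d x=:J_1(r)+J_2(r),
\]
so that on each sub-interval $|x-a|$ becomes a smooth function. Setting $\psi_2(x):=(x-a)\phi(x)$, one checks that $\psi_2\in C^\infty[a,\gamma_2]$ with $\psi_2(a)=0$ trivially and $\psi_2(\gamma_2)=0$ because $\phi$ is compactly supported inside $(\gamma_1,\gamma_2)$; the analogous property holds for $\psi_1(x):=(a-x)\phi(x)$ on $[\gamma_1,a]$. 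I would then focus on $J_2$, the bound for $J_1$ being obtained by the same argument after the reflection $x\mapsto -x$. If $0\notin[a,\gamma_2]$, then $1/x$ is bounded on the integration interval, a single integration by parts via $e^{\i r x^2}\d x=(2\i r x)^{-1}\d e^{\i r x^2}$ is permissible, and the boundary terms vanish because $\psi_2$ does, giving $|J_2(r)|\leq C r^{-1}$. If $0\in[a,\gamma_2]$, I would imitate the proof of Theorem \ref{th:decay_intb2} and split $J_2$ into a central piece on $[-r^{-1/2},r^{-1/2}]\cap[a,\gamma_2]$ and (one or two) outer pieces. The central piece is $O(r^{-1/2})$ directly because $|\psi_2|$ is bounded; on each outer piece a single integration by parts produces a boundary term at $\pm r^{-1/2}$ of size $\sim r^{-1}\cdot r^{1/2}=r^{-1/2}$ together with a remainder integral of $(\psi_2(x)/x)'$ against $e^{\i r x^2}$ whose modulus is controlled by $r^{-1}\int x^{-2}\d x\sim r^{-1/2}$.

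The main obstacle will be checking that the resulting estimate is uniform in $a\in[L_1,L_2]$, in particular when $a$ is close to $0$ so that the new endpoint of the split integral lies near the stationary point. What saves the argument is that the boundary contribution at $x=a$ vanishes identically because $\psi_\ell(a)=0$, so the only boundary contributions come from $x=\pm r^{-1/2}$, whose sizes depend only on $\|\psi_\ell\|_{C^1}$, hence only on $\|\phi\|_{C^1}$ and on $\max(|L_1|,|L_2|,|\gamma_1|,|\gamma_2|)$. Summing the bounds for $J_1$ and $J_2$ yields $|I^D_1(r)|\leq C r^{-1/2}$ uniformly in $r>0$ and $a\in[L_1,L_2]$, as claimed.
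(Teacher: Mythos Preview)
Your approach is correct and takes a genuinely different route from the paper's. After splitting at $a$, the paper further decomposes each piece via $(x-a)\phi = x\phi - a\phi$: the integrals $\int t\phi(t)e^{\i r t^2}\d t$ admit a clean integration by parts (the factor $t$ cancels the $1/t$ coming from $\d e^{\i r t^2}$), giving $O(r^{-1})$ with no uniformity issue, while the integrals $a\int \phi(t)e^{\i r t^2}\d t$ are handled by subtracting $\phi(0)$ and recognising a Fresnel integral, giving $O(r^{-1/2})$. Your route instead keeps $\psi_\ell=\pm(x-a)\phi$ intact and relies on the small-neighbourhood cut-off device of Theorem~\ref{th:decay_intb2}. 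Both strategies work; the paper's decomposition sidesteps the uniformity-in-$a$ question more transparently because the $a$-dependence is isolated as a harmless multiplicative constant, whereas in your version it is hidden inside $\psi_\ell$.

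One point you should make explicit: you justify uniformity in $a$ only through the vanishing boundary term at $x=a$, but the \emph{remainder integral} after integration by parts must also be checked. In the branch ``$0\notin[a,\gamma_2]$, single IBP'' with $a>0$ small, the remainder is $\frac{1}{2r}\int_a^{\gamma_2}(\psi_2(x)/x)'e^{\i r x^2}\d x$; writing $\psi_2(x)/x=\phi(x)-a\phi(x)/x$ gives $(\psi_2/x)'=\phi'-a\phi'/x+a\phi/x^2$, and the potentially dangerous term satisfies $a\int_a^{\gamma_2}x^{-2}\d x=1-a/\gamma_2\leq 1$, so your claim $|J_2|\leq Cr^{-1}$ does hold uniformly. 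This cancellation---the factor $a$ compensating for the blow-up of $\int_a x^{-2}$---is the real reason the argument survives, and it should be displayed rather than subsumed under ``$1/x$ is bounded on the integration interval''.
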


\begin{proof}We only need to consider the case that $a\in(\gamma_1,\gamma_2)$. When $a\not\in (\gamma_1,\gamma_2)$, it is just a simplified version of the case that $a\in(\gamma_1,\gamma_2)$ thus the details are omitted. 

Assume $a\in(\gamma_1,\gamma_2)$. 
Since the function $x\mapsto|x-a|$ is piecewise analytic,
\begin{align*}
I^D_1(r)&=\int_{\gamma_1}^a (a-t)\phi(t)e^{-2\i k r t^2}\d t+\int_a^{\gamma_2}(t-a)\phi(t)e^{-2\i kr t^2}\d t\\
&=a\int_{\gamma_1}^a\phi(t)e^{-2\i r t^2}\d t-a\int_a^{\gamma_2}\phi(t)e^{-2\i k rt^2}\d t-\int_{\gamma_1}^a t\phi(t)e^{-2\i k r t^2}\d t+\int_a^{\gamma_2}t\phi(t)e^{-2\i k rt^2}\d t\\
&:=a(I)-a(II)-(III)+(IV).
\end{align*}
For (I) (similar also for (II)), since $0$ becomes a singularity when we apply integration by parts, we split the integral as follows
\begin{align*}
(I)=\phi(0)\int_{\gamma_1}^a e^{-2\i r t^2}\d t+\int_{\gamma_1}^a t\psi(t)e^{-2\i k r t^2}\d t:=\phi(0)(I_1)+(I_2)
\end{align*}
where
\[
\psi(t)=\frac{\phi(t)-\phi(0)}{t}\in C^\infty(\gamma_1,a]\quad\text{ with }\psi(0)=\phi'(0).
\]
For $(I_1)$, let $s=\sqrt{2r}t$ with the Fresnel integral \eqref{eq:fresnel},
\[
|(I_1)|=\left|\frac{1}{\sqrt{2r}}\int_{\sqrt{2r}\gamma_1}^{\sqrt{2r}a}e^{-\i s^2}\d s\right|\leq Cr^{-1/2}.
\]
For $(I_2)$, with integration by parts,
\begin{align*}
|(I_2)|&=\left|-\frac{1}{4\i k r}\int_{\gamma_1}^a \psi(t)\d e^{-2\i k rt^2}\right|\\
&=\left|\left.-\frac{1}{4\i k r}\psi(t)e^{-2\i k rt^2}\right|^a_{\gamma_1}+\frac{1}{4\i k r}\int_{\gamma_1}^a\psi'(t)e^{-2\i k rt^2}\d t\right|\leq Cr^{-1}.
\end{align*}
Thus we can get the  estimations for both $(I)$ and $(II)$:
\[
|(I)|,\,|(II)|\leq Cr^{-1/2}.
\]

Similar as the arguments for $(I_2)$, we can also get the estimation for $(III)$ and $(IV)$:
\[
|(III)|,\,|(IV)|\leq Cr^{-1}.
\]
With all results for $(I)$, $(II)$, $(III)$ and $(IV)$, we can finaly finish the proof.
\end{proof}

Now we move on to {\bf the second integral of Class D}, which is defined by
\begin{equation}
\label{eq:intd2}
I^D_2(r):=\int_{\gamma_1}^{\gamma_2}{\rm sgn}(x-a)\phi(x)e^{\i r t^2}\d t.
\end{equation}
Note that here the sign function ${\rm sgn}$ is defined as 
\[
{\rm sgn}(x)=D|x|=\begin{cases}
1,\quad x>0;\\
0,\quad x=0;\\
-1,\quad x<0.
\end{cases}
\]
The decay of $I^D_2(r)$ is concluded as the corollary of Theorem \ref{th:intc1}.

\begin{corollary}
\label{cr:decay_intd2}
When $\phi\in C_0^\infty(\gamma_1,\gamma_2)$ and $a\in[L_1,L_2]$, there is a constant $C>0$ only depends on $\phi$ such that
\begin{equation}
\label{eq:decay_intd2}
\left|I^D_2(r)\right|\leq Cr^{-1/2}.
\end{equation}
\end{corollary}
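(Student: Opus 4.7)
The plan is to reduce $I^D_2(r)$ to the building-block integrals that were already estimated inside the proof of Theorem \ref{th:intd1}, thereby obtaining the bound essentially for free. Since $\mathrm{sgn}(x-a)=-1$ on $(\gamma_1,a)$ and $+1$ on $(a,\gamma_2)$, I split
\[
I^D_2(r) = -\int_{\gamma_1}^{a}\phi(x)e^{\i r x^2}\d x + \int_{a}^{\gamma_2}\phi(x)e^{\i r x^2}\d x.
\]
If $a\not\in(\gamma_1,\gamma_2)$ only one of the two pieces is present and the argument is identical.

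Both summands have exactly the form of the quantities labelled $(I)$ and $(II)$ inside the analysis of $I^D_1(r)$ in the proof of Theorem \ref{th:intd1}. There it was shown that each such piece is $O(r^{-1/2})$, with an implicit constant depending only on $\phi$ and, importantly, uniform in $a\in[L_1,L_2]$. The recipe to follow is standard: write $\phi(x)=\phi(0)+x\psi(x)$ with $\psi\in C^\infty$; the constant part $\phi(0)\int e^{\i r x^2}\d x$ reduces, after the substitution $s=\sqrt{r}\,x$, to $r^{-1/2}$ times a Fresnel integral whose real and imaginary parts are uniformly bounded by \eqref{eq:fresnel}; the remainder $\int x\psi(x)e^{\i r x^2}\d x$ is handled by the identity $x\,e^{\i r x^2}\d x=(2\i r)^{-1}\d e^{\i r x^2}$ followed by one integration by parts, producing only boundary terms and $\int \psi'(x)e^{\i r x^2}\d x$, all of which are $O(r^{-1})$.

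I do not anticipate any genuine obstacle. The stationary point $x=0$ of the phase $r x^2$, which would otherwise obstruct integration by parts, is absorbed cleanly by the splitting $\phi=\phi(0)+x\psi$; and the dependence on the parameter $a$ is harmless since the Fresnel bounds do not depend on the limits of integration. Combining the estimates on the two pieces then yields $|I^D_2(r)|\leq Cr^{-1/2}$ with $C$ depending only on $\phi$, as claimed.
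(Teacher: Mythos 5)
Your proof is correct and follows essentially the same route as the paper's: split at $x=a$ using the definition of $\mathrm{sgn}$, and recognise the two pieces as the quantities $(I)$ and $(II)$ already bounded by $O(r^{-1/2})$, uniformly in $a$, inside the proof of Theorem~\ref{th:intd1}. (The paper's own one-line proof cites the terms $(I)$, $(II)$ from Theorem~\ref{th:intc1}, which appears to be a misprint for Theorem~\ref{th:intd1}; your reading is the intended one.)
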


\begin{proof}
Directly from the definition of the sign function,
\[
I^D_2(r)=-\int_{\gamma_1}^a\phi(x)e^{\i r t^2}\d t+\int_a^{\gamma_2}\phi(x)e^{\i r t^2}\d t
\]
where both functions has exactly the same forms of the terms (I) and (II) in the proof of Theorem \ref{th:intc1}. Thus the proof follows exactly as the proof of the previous theorem.
\end{proof}

\bibliographystyle{plain}
\bibliography{ip-biblio}

\providecommand{\noopsort}[1]{}
\begin{thebibliography}{10}

\bibitem{Abram1972}
M.~Abramowitz and I.~Stegun.
\newblock {\em Handbook of Mathematical Functions with Formulas, Graphs, and
  Mathematical Tables. Applied Mathematics Series. Vol. 55}.
\newblock United States Department of Commerce, National Bureau of Standards;
  Dover Publications, 1972.

\bibitem{Arens2005a}
T.~Arens and T.~Hohage.
\newblock On radiation conditions for rough surface scattering problems.
\newblock {\em IMA J. Applied Math.}, 70:839--847, 2005.

\bibitem{Bonne1994}
A.-S. Bonnet-Bendhia and F.~Starling.
\newblock Guided waves by electromagnetic gratings and non-uniqueness examples
  for the diffraction problem.
\newblock {\em Mathematical Methods in the Applied Sciences}, 17:305--338,
  1994.

\bibitem{Chand2010}
S.~N. {Chandler-Wilde} and J.~Elschner.
\newblock Variational approach in weighted {S}obolev spaces to scattering by
  unbounded rough surfaces.
\newblock {\em SIAM. J. Math. Anal.}, 42:2554--2580, 2010.

\bibitem{Chand2005}
S.~N. {Chandler-Wilde} and P.~Monk.
\newblock Existence, uniqueness, and variational methods for scattering by
  unbounded rough surfaces.
\newblock {\em SIAM. J. Math. Anal.}, 37:598--618, 2005.

\bibitem{Chand1996}
S.~N. Chandler-Wilde and C.R. Ross.
\newblock Scattering by rough surfaces: the {D}irichlet problem for the
  {H}elmholtz equation in a non-locally perturbed half-plane.
\newblock {\em Math. Meth. Appl. Sci.}, 19:959--976, 1996.

\bibitem{Chand1996a}
S.~N. Chandler-Wilde and C.R. Ross.
\newblock Uniqueness results for direct and inverse scattering by infinite
  surfaces in a lossy medium.
\newblock {\em Inverse Problems}, 11:1063--1067, 1996.

\bibitem{Chand1998}
S.~N. Chandler-Wilde and B.~Zhang.
\newblock A uniqueness result for scattering by infinite rough surfaces.
\newblock {\em SIAM J. Appl. Math.}, 58:1774--1790, 1998.

\bibitem{Chand1999a}
S.~N. Chandler-Wilde and B.~Zhang.
\newblock Scattering of electromagnetic waves by rough interfaces and
  inhomogeneous layers.
\newblock {\em SIAM J. Math. Anal.}, 30:559--583, 1999.

\bibitem{Chand1999}
S.N. Chandler-Wilde, C.R. Ross, and B.~Zhang.
\newblock Scattering by infinite one-dimensional rough surfaces.
\newblock {\em Proceedings of the Royal Society {A}}, 455:3767--3787, 1999.

\bibitem{Coatl2012}
J.~Coatl{\'e}ven.
\newblock {Helmholtz equation in periodic media with a line defect}.
\newblock {\em {J. Comp. Phys.}}, 231:1675--1704, 2012.

\bibitem{Colto2013}
D.~L. Colton and R.~Kress.
\newblock {\em Inverse acoustic and electromagnetic scattering theory}.
\newblock Springer, 3. edition, 2013.

\bibitem{Evans1998}
L.C. Evans.
\newblock {\em Partial Differential Equations}.
\newblock AMS, 1998.

\bibitem{Fliss2015}
S.~Fliss and P.~Joly.
\newblock {Solutions of the time-harmonic wave equation in periodic waveguides:
  asymptotic behaviour and radiation condition}.
\newblock {\em {Arch. Rational Mech. Anal.}}, 219:349--386, 2016.

\bibitem{Hadda2016}
H.~Haddar and T.~P. Nguyen.
\newblock {A volume integral method for solving scattering problems from
  locally perturbed infinite periodic layers}.
\newblock {\em Appl. Anal.}, 96(1):130--158, 2016.

\bibitem{Hoang2011}
V.~Hoang.
\newblock The limiting absorption principle for a periodic semin-infinite
  waveguide.
\newblock {\em SIAM J. Appl. Math.}, 71(3):791--810, 2011.

\bibitem{Hu2021}
G.~Hu, W.~Lu, and A.~Rathsfeld.
\newblock Time-harmonic acoustic scattering from locally perturbed periodic
  curves.
\newblock {\em SIAM J. Appl. Math.}, 81(6):2569--2595, 2021.

\bibitem{Kirsc1993}
A.~Kirsch.
\newblock Diffraction by periodic structures.
\newblock In L.~P{\"a}varinta and E.~Somersalo, editors, {\em Proc. Lapland
  Conf. on Inverse Problems}, pages 87--102. Springer, 1993.

\bibitem{Kirsc2022}
A.~Kirsch.
\newblock A scattering problem for a local perturbation of an open periodic
  waveguide.
\newblock {\em Math. meth. in the Appl. Sci.}, 45(10):5737--5773, 2022.

\bibitem{Kirsc2017}
A.~Kirsch and A.~Lechleiter.
\newblock The limiting absorption principle and a radiation condition for the
  scattering by a periodic layer.
\newblock {\em SIAM J. Math. Anal.}, 50(3):2536--2565, 2018.

\bibitem{Kirsc2017a}
A.~Kirsch and A.~Lechleiter.
\newblock A radiation condition arising from the limiting absorption principle
  for a closed full‐ or half‐waveguide problem.
\newblock {\em Math. Meth. Appl. Sci.}, 41(10):3955--3975, 2018.

\bibitem{Lechl2016}
A.~Lechleiter.
\newblock The {F}loquet-{B}loch transform and scattering from locally perturbed
  periodic surfaces.
\newblock {\em J. Math. Anal. Appl.}, 446(1):605--627, 2017.

\bibitem{Lechl2015e}
A.~Lechleiter and D.-L. Nguyen.
\newblock {Scattering of {H}erglotz waves from periodic structures and mapping
  properties of the {B}loch transform}.
\newblock {\em {Proc. Roy. Soc. Edinburgh Sect. A}}, 231:1283--1311, 2015.

\bibitem{Lechl2016a}
A.~Lechleiter and R.~Zhang.
\newblock A convergent numerical scheme for scattering of aperiodic waves from
  periodic surfaces based on the {F}loquet-{B}loch transform.
\newblock {\em SIAM J. Numer. Anal}, 55(2):713--736, 2017.

\bibitem{Lechl2017}
A.~Lechleiter and R.~Zhang.
\newblock A {F}loquet-{B}loch transform based numerical method for scattering
  from locally perturbed periodic surfaces.
\newblock {\em SIAM J. Sci. Comput.}, 39(5):B819--B839, 2017.

\bibitem{Lu2021}
W.~Lu.
\newblock Mathematical analysis of wave radiation by a step-like surface.
\newblock {\em SIAM J. Appl. Math.}, 81(2):666--693, 2021.

\bibitem{Matha2012}
R.~J. Mathar.
\newblock Series expansion of generalized {F}resnel integrals.
\newblock 2012.

\bibitem{Milla1973}
R.F. Millar.
\newblock The {R}ayleigh hypothesis and a related least-squares solution to
  scattering problems for periodic surfaces and other scatterers.
\newblock {\em Radio Science}, 8:785--796, 1973.

\bibitem{Petit1980}
R.~Petit, editor.
\newblock {\em Electromagnetic theory of gratings}.
\newblock Springer, 1980.

\bibitem{Somme1957}
A.~Sommerfeld.
\newblock {\em Partial Differential Equations in Physics}.
\newblock Academic Press, New York, 1957.

\bibitem{Zhang2017e}
R.~Zhang.
\newblock A high order numerical method for scattering from locally perturbed
  periodic surfaces.
\newblock {\em SIAM J. Sci. Comput.}, 40(4):A2286--A2314, 2018.

\bibitem{Zhang2019a}
R.~Zhang.
\newblock Spectrum decomposition of translation operators in periodic
  waveguide.
\newblock {\em SIAM Journal on Applied Mathematics}, 81(1):233--257, 2021.

\end{thebibliography}
\end{document}